\theoremstyle{plain}
\newtheorem{theorem}{Theorem}[section]
\newtheorem{lemma}[theorem]{Lemma}
\newtheorem{definition-theorem}[theorem]{Definition-Theorem}
\newtheorem{proposition}[theorem]{Proposition}
\newtheorem{corollary}[theorem]{Corollary}
\theoremstyle{definition}
\newtheorem{definition}[theorem]{Definition}
\newtheorem{example}[theorem]{Example}
\newtheorem{remark}[theorem]{Remark}
\newtheorem{notation}[theorem]{Notation}
\newcommand \bth[1] { \begin{theorem}\label{t#1} }
\newcommand \ble[1] { \begin{lemma}\label{l#1} }
\newcommand \bpr[1] { \begin{proposition}\label{p#1} }
\newcommand \bco[1] { \begin{corollary}\label{c#1} }
\newcommand \bde[1] { \begin{definition}\label{d#1}\rm }
\newcommand \bex[1] { \begin{example}\label{e#1}\rm }
\newcommand \bre[1] { \begin{remark}\label{r#1}\rm }
\newcommand \bnota[1] {\begin{notation}\label{n#1}\rm }
\newcommand {\ele} { \end{lemma} }
\newcommand {\epr} { \end{proposition} }
\newcommand {\eco} { \end{corollary} }
\newcommand {\ede} { \end{definition} }
\newcommand {\eex} { \end{example} }
\newcommand {\ere} { \end{remark} }
\newcommand {\enota} { \end{notation} }
\newcommand {\Alg} {\mathrm{Alg}}
\newcommand {\GL} {\mathrm{GL}}
\newcommand {\Wh} {\mathrm{Wh}}
\newcommand {\St} {\mathrm{St}}
\newcommand {\Ext} {\mathrm{Ext}}
\newcommand {\EP} {\mathrm{EP}}
\newcommand {\Hom} {\mathrm{Hom}}
\newcommand {\ind} {\mathrm{ind}}
\title[Vanishing Ext-groups for $(\GL_{n+1}(F), \GL_n(F))$]{A vanishing Ext-branching theorem for $(\GL_{n+1}(F), \GL_n(F))$}
\author[Chan]{Kei Yuen Chan}
\address{
Shanghai Center for Mathematical Sciences \\
Fudan University}
\email{kychan@fudan.edu.cn}
\author[Savin]{Gordan Savin}
\address{
Department of Mathematics \\
University of Utah}
\email{savin@math.utah.edu}
\begin{document}
\begin{abstract} 
We prove a conjecture of Dipendra Prasad on the Ext-branching problem from $\GL_{n+1}(F)$ to $\GL_n(F)$, where $F$ is a $p$-adic field. 
\end{abstract} 

\maketitle

\section{Introduction}

Decomposing a smooth representation of $\GL_{n+1}(F)$, when restricted to $\GL_n(F)$, is a well known and well studied problem that was initiated in a paper of Prasad \cite{Pr1}. 
 Today, this problem is a part of a large family of Gan-Gross-Prasad restriction problems \cite{GGP} that are at the center of much research in Representation Theory and Number Theory. 
In order to describe what is known, and what is new in this paper, 
 let  $G_n=\GL_n(F)$ and let $\Alg(G_n)$ be the category of smooth representations of $G_n$. 
For every $\pi\in \Alg(G_n)$, let 
$\Wh(\pi)$ be the space of Whittaker functionals on $\pi$.  If $\pi$ is irreducible then $\Wh(\pi)$ is one or zero dimensional. We say that $\pi$ is generic or degenerate, 
respectively.  Let $\pi_1$ be an irreducible representation of $G_{n+1}$.  One of the most significant results in the subject is that 
the restriction of $\pi_1$ to $G_n$ is multiplicity free \cite{AGRS}, \cite{AG}, \cite{SZ}, that is, for every irreducible representation $\pi_2$ of $G_n$, 
\[ \dim \Hom_{G_n}(\pi_1, \pi_2) \leq 1\] 
 and it is one if both representations are generic. On the other hand, Dipendra Prasad has proved 
in \cite{Pr} the following beautiful formula: 
\[ 
\EP(\pi_1, \pi_2) := \sum (-1)^i \dim\Ext^i_{G_n}(\pi_1,\pi_2) = \dim \Wh(\pi_1) \cdot \dim \Wh(\pi_2). 
\] 
In particular, the formula implies that  $\EP(\pi_1, \pi_2)=1$ if both representations are generic.  Since $\dim \Hom_{G_n}(\pi_1, \pi_2)=1$, Prasad has conjectured that 
$\Ext^i_{G_n}(\pi_1,\pi_2)$ vanish for $i>0$ if both representations are generic.  

\smallskip 

The first main result in this paper is a proof of this conjecture. 
The proof is based on the theory of Bernstein-Zelevinsky derivatives \cite{BZ1}, \cite{BZ} with the following, additional ingredient.  
The theory of derivatives describes how a smooth representation of $G_{n+1}$ restricts to the mirabolic subgroup $M_{n+1}$. However, instead of $M_{n+1}$ one can  
consider the transpose $M_{n+1}^{\top}$ of $M_{n+1}$, and develop a theory of derivatives with respect to $M_{n+1}^{\top}$. Thus we have two notions of 
derivatives: those with respect to $M_{n+1}$ are called right derivatives and those with respect to $M_{n+1}^{\top}$ are called left derivatives. 
Since $M_{n+1}^{\top}$ is not conjugated to $M_{n+1}$ in $G_{n+1}$, the information provided by left and right derivatives taken together is stronger, and is essential to 
our combinatorial arguments. Let us illustrate the argument when $\pi_1$ is the Steinberg representation of $\GL_2(F)$.  Let $\nu(g)=|g|$ be a character of $\GL_1$. 
The theory of derivatives implies that the restriction  of $\pi_1$ to $\GL_1(F)$ is given by the following Bernstein-Zelevinsky filtration 
\[ 
0 \rightarrow C_c( F^{\times}) \rightarrow \pi_1 \rightarrow \mathbb C \rightarrow 0 
\] 
where $C_c( F^{\times})$ is the space of locally constant, compactly supported functions on $F^{\times}$, and 
$\GL_1( F)$ acts on $\mathbb C$ by the character $\nu$ or $\nu^{-1}$, depending whether we use right or left derivatives, respectively. Thus, for a given character $\pi_2$ 
of $\GL_1(F)$,  one can clearly arrange that the character on the quotient $\mathbb C$ in the above sequence is different from $\pi_2$. 
Now higher extension spaces vanish since $C_c( F^{\times})$ is projective. Even multiplicity one statement is clear since it holds for $C_c( F^{\times})$. 
The general case, restricting from $G_{n+1}$ to $G_n$, follows this strategy. The bottom piece of the Bernstein-Zelevinsky filtration of $\pi_1$ is the Gelfand-Graev representation 
of $G_n$,  thus vanishing of higher extensions, and multiplicity one for generic representations, 
 follow from projectivity \cite{CS}, and multiplicity one for the Gelfand-Graev representation of $G_n$, respectively. 
 
 \smallskip 

 Let $K_r$ be the $r$-th principal congruence subgroup in $G_n$. For every $\pi\in \Alg(G_n)$ generated by the subspace $\pi^{K_r}$ of $K_r$-fixed vectors, the left ${}^{(i)}\pi$ and the right $\pi^{(i)}$ derivative are related by the 
isomorphism $(\pi^{(i)})^{\vee} \cong {}^{(i)} (\pi^{\vee})$.  This isomorphism is not true for all $\pi\in \Alg(G_n)$. 
We establish it as a consequence of a ``second adjointness isomorphism'' for Bernstein-Zelevinsky 
derivatives, proved in the appendix.  

\smallskip 

The second main result is projectivity of essentially square integrable representations of $G_{n+1}$, when restricted to $G_n$. 
If $\pi_1$ is projective as $G_n$-module then higher extension spaces vanish  without 
assuming that $\pi_2$ is generic.  On the other hand, if $\pi_2$ is degenerate, then $\EP(\pi_1, \pi_2) =0$ by the Prasad's formula. 
If $\pi_2$ is also a quotient of $\pi_1$ then $\Ext^i_{G_n}(\pi_1,\pi_2)\neq 0$ for some $i>0$. 
Thus a necessary condition for $\pi_1$ to be $G_n$-projective is not to have degenerate quotients. 
In this paper we show that this is also a sufficient condition. The proof relies heavily on the Hecke algebra methods from our earlier paper \cite{CS}. 
We also show that this condition is satisfied if $\pi_1$ is an essentially square integrable representation. 
Therefore essentially square integrable representations of $G_{n+1}$ are projective $G_n$-modules. Moreover, any two essentially square integrable representations of $G_{n+1}$ 
are isomorphic as $G_n$-modules. 
This result generalizes the classical result of Bernstein and Zelevinsky which says that any two cuspidal representations of $G_{n+1}$ are 
isomorphic when restricted to the mirabolic subgroup $M_{n+1}$ of $G_{n+1}$. We illustrate this result when $\pi_1$ is a representation of $\GL_2(F)$. If 
$\pi_1$ is cuspidal, then it is isomorphic to $C_c( F^{\times})$. On the other hand,  if $\pi_1$ is the Steinberg representation, then it is isomorphic to $C_{c}( F)$.   
The isomorphism between these two $\GL_1(F)$-modules is obtained one Bernstein component at a time, that is, for the isotypic components under the action of $O^{\times}$ where 
$O$ is the ring of integers in $F$. It is clear that, given a character of $O^{\times}$, the corresponding components are naturally isomorphic except 
when the character is trivial.  Note that $O^{\times}$-invariants are 
\[ 
C_c( F^{\times})^{O^{\times}} = \oplus_{n\in \mathbb Z} \mathbb C \varphi^{\times}_n  \text{ and } 
C_c( F)^{O^{\times}} = \oplus_{n\in \mathbb Z} \mathbb C \varphi_n 
\] 
where $\varphi^{\times}_n$  and $\varphi_n$ are the characteristic functions of $\varpi^n O^{\times}$ and $\varpi^n O$, respectively, and 
 $\varpi$ a uniformizing element.
Now $\varphi^{\times}_n \leftrightarrow \varphi_n$ gives an isomorphism. In general, for any essentially square integrable representation $\pi_1$, 
we identify each Bernstein component of $\pi_1$ with an explicit 
projective Hecke algebra module, independent of $\pi_1$. In a forthcoming paper \cite{Ch}, we shall classify all irreducible representations which are projective when restricted from $G_{n+1}$ to $G_n$, and also study the indecomposibility of irreducible smooth representations under restriction.

\smallskip 
This paper is devoted to study of the quotient restriction problem, but one can also consider the submodule restriction problem:  $\Hom_{G_n}(\pi_2,\pi_1)$. 
The two problems are related by a cohomological duality  
\[ 
\Ext_{G_n}^i(\pi_2,\pi_1)^{\vee} \cong \Ext_{G_n}^{d(\pi_2) -i}(\pi_1,  D(\pi_2)), 
\] 
where $d(\pi_2)$ is the cohomological dimension of $\pi_2$ and  $D(\pi_2)$ is the Aubert involute of $\pi_2$, 
due to Nori and Prasad \cite{NP}. This duality gives an additional importance to the cohomological 
restriction problem studied in this paper. Since $d(\pi_2)>0$, due to the presence of one-dimensional center in $G_n$, it follows that 
$\Hom_{G_n}(\pi_2,\pi_1)=0$ for all irreducible $\pi_2$ if $\pi_1$ is projective, in particular, this is true if $\pi_1$ is an essentially square integrable representation, by 
the results of this paper.

\section{Bernstein-Zelevinsky derivatives}

In this section we study Bernstein-Zelevinsky derivatives, or simply derivatives, as functors from $\Alg(G_n)$ to $\Alg(G_{n-i})$. 
We state a ``second adjointness isomorphism'' for these functors, as well as an Ext version of the formula. Mirabolic group will appear in the next section. 

\subsection{Notation} Let $G_n=\GL_n(F)$, where $F$ is $p$-adic field. Let  $\nu(g)=|\mathrm{det}(g)|$ be the character of $G_n$, where 
$|\cdot |$ is the absolute value on $F$. 
Let $B_n$ be the Borel subgroup of $G_n$ consisting of upper triangular matrices and let $U_n$ be the unipotent radical of $B_n$.  Let 
\[
R_{n-i}=  \left\{ \begin{pmatrix} g & x \\ 0 & u \end{pmatrix} : g \in G_{n-i}, u \in U_{i}, x \in \mathrm{Mat}_{n-i,i}(F) \right\} .
\]
We have an obvious Levi decomposition $R_{n-i}=G_{n-i}E_{n-i}$, where $E_{n-i}$ is the unipotent radical of $R_{n-i}$. Moreover,  
$E_{n-i}=N_{n-i}U_i$ where $N_{n-i}$ is the unipotent radical of the maximal parabolic subgroup $P_{n-i}$ consisting of block upper triangular matrices
and Levi factor $G_{n-i} \times G_i$. 
  Fix a non-zero additive character $\psi$ of $F$. Let $\psi_i$ be the character of $E_{n-i}$ defined by 
 \[ 
 \psi_i \left(  \begin{smallmatrix} 1 & x \\ 0 & u \end{smallmatrix}\right) = \psi(u_{1,2}+  \ldots  +u_{i-1,i}) 
 \] 
 where $u_{1,2}, \ldots , u_{i-1,i}$ are the entries of $u$ above the diagonal. Let $\delta_{R_i}$ be the modular character of $R_{n-i}$.  The modular character is 
 trivial on the unipotent radical $E_{n-i}$, and it is equal to $\nu^{i}$ on the Levi factor $G_{n-i}$. 
 Let $\pi$ be a smooth representation of $G_n$ on a vector space $V$.  The right $i$-th Bernstein-Zelevinsky derivative of $\pi$  is a smooth representation 
 $\pi^{(i)}$ of $G_{n-i}$ on the vector space $V^{(i)}$ defined by 
 \[  
  V^{(i)}=V / \langle \pi(e)v - \psi_i(e)v : e \in E_{n-i}, v \in V \rangle. 
\]
The representation $\pi^{(i)}$ is the natural action of the Levi factor $G_{n-i}$ on $V^{(i)}$ twisted by $\delta_{R_{n-i}}^{-1/2}$, that is, Bernstein-Zelevinsky derivatives in this paper 
are normalized. 

\smallskip 

From the definition of derivatives,  the factorization $R_{n-i}=G_{n-i}E_{n-i}$, and the Frobenius reciprocity,  one can easily prove the 
 ``first adjointness'' isomorphism for right Bernstein-Zelevinsky derivatives:   
For any smooth representation $\pi$ of $G_n$ and smooth representation $\sigma$ of  $G_{n-i}$: 
\[ \Hom_{G_n} (\pi, \mathrm{Ind}_{R_{n-i}}^{G_n}(\sigma \otimes \psi_{i})) \cong \Hom_{G_{n-i}} (\pi^{(i)},  \sigma). \]

We called the derivative right because there is also a left derivative, which is taken with respect to the transpose of the groups used to define right derivatives. 
More precisely, let $\theta_n(g) = (g^{-1})^{\top}$ be the outer automorphism of $G_n$ where $g^{\top}$ is the transpose of $g$  (over the usual diagonal here, but it may be over the opposite diagonal if more convenient). 
Then the left derivative of $\pi$ is defined by 
 \[  {}^{(i)} \pi = \theta_{n-i}(\theta_n(\pi)^{(i)}) .\]
In other words, the underlying vector space  for  ${}^{(i)} \pi$ is 
 \[  
  {}^{(i)}V=V / \langle \pi(e)v - \psi^{\top}_i(e)v : e \in E^{\top}_{n-i}, v \in V \rangle, 
\] 
where $\psi_i^{\top}$ is the character of $E^{\top}_{n-i}$ defined by 
 \[ 
 \psi^{\top}_i \left(  \begin{smallmatrix} 1 & 0 \\ x & u \end{smallmatrix}\right) = \bar\psi(u_{2,1}+  \ldots  +u_{i,i-1}). 
 \] 
 
 \smallskip 
 
 The following is the ``second adjointness''  isomorphism for left Bernstein-Zelevinsky derivatives, proved in the appendix. 
 
 \begin{lemma} \label{lem second adjointness bz der} Let $K_r$ be the $r$-th principal congruence subgroup in $G_n$. For any 
 representation  $\pi$ of $G_n$ generated by $\pi^{K_r}$, the space of $K_r$-fixed vectors in $\pi$,  and any smooth representation $\sigma$  of  $G_{n-i}$, 
\[ 
\Hom_{G_n} (\mathrm{ind}_{R_{n-i}}^{G_n}(\sigma \otimes \bar\psi_{i}), \pi) \cong \Hom_{G_{n-i}} (\sigma, {}^{(i)}\pi). 
\] 
\end{lemma}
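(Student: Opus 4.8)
The plan is to prove this as a Bernstein-style ``second adjointness'' for the non-parabolic subgroup $R_{n-i}$. The first adjointness displayed above exhibits $\sigma\mapsto\Ind_{R_{n-i}}^{G_n}(\sigma\otimes\psi_i)$ as a \emph{right} adjoint of the right-derivative functor $\pi\mapsto\pi^{(i)}$, and the lemma says, after the transpose that interchanges left and right derivatives, that $\sigma\mapsto\ind_{R_{n-i}}^{G_n}(\sigma\otimes\bar\psi_i)$ is a \emph{left} adjoint of $\pi\mapsto\pi^{(i)}$. Since $R_{n-i}$ is not parabolic this does not follow from Bernstein's theorem, and the hypothesis that $\pi$ be generated by $\pi^{K_r}$ will be indispensable. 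The first step is to split off a genuinely parabolic part, to be handled by Bernstein's second adjointness, from a Gelfand--Graev part living in the smaller factor $G_i$, where all the content lies.

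For the parabolic part, I would use that $R_{n-i}$ sits inside the maximal parabolic $P_{n-i}=(G_{n-i}\times G_i)N_{n-i}$, that $\bar\psi_i$ is trivial on $N_{n-i}$ and restricts to a non-degenerate character of the maximal unipotent $U_i\subset G_i$, and that $G_n/P_{n-i}$ is compact. Transitivity of compact induction then gives, after matching the modular characters $\delta_{R_{n-i}}$ and $\delta_{P_{n-i}}$, an isomorphism $\ind_{R_{n-i}}^{G_n}(\sigma\otimes\bar\psi_i)\cong\Ind_{P_{n-i}}^{G_n}\bigl(\sigma\boxtimes\Gamma_i\bigr)$, where $\Gamma_i=\ind_{U_i}^{G_i}(\bar\psi_i)$ is the Gelfand--Graev representation of $G_i$ and $\Ind$ is normalized parabolic induction. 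Bernstein's second adjointness for $P_{n-i}$, which holds for \emph{all} smooth $\pi$, then yields $\Hom_{G_n}\bigl(\Ind_{P_{n-i}}^{G_n}(\sigma\boxtimes\Gamma_i),\pi\bigr)\cong\Hom_{G_{n-i}\times G_i}\bigl(\sigma\boxtimes\Gamma_i,\ r_{\bar P_{n-i}}\pi\bigr)$, with $r_{\bar P_{n-i}}$ the normalized Jacquet functor for the opposite parabolic. Applying the standard isomorphism $\Hom_{G_{n-i}\times G_i}(\sigma\boxtimes\Gamma_i,\tau)\cong\Hom_{G_{n-i}}\bigl(\sigma,\Hom_{G_i}(\Gamma_i,\tau)^{\mathrm{sm}}\bigr)$ in the $G_i$-variable, and noting that ${}^{(i)}\pi$ is precisely the $(U_i^{\top},\psi_i^{\top})$-twisted coinvariants of $r_{\bar P_{n-i}}\pi$ taken in the $G_i$-direction (this is what the definition of the left derivative records, once the Levi $G_{n-i}\times G_i$ and the radical $E_{n-i}^{\top}$ are matched), the lemma reduces to one functorial statement in $G_i$: for $\rho\in\Alg(G_i)$ carrying a commuting $G_{n-i}$-action and lying in the appropriate subcategory, $\Hom_{G_i}(\Gamma_i,\rho)^{\mathrm{sm}}\cong\rho_{U_i^{\top},\psi_i^{\top}}$ as $G_{n-i}$-modules, the superscript denoting smooth vectors for the residual $G_{n-i}$-action.

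This last isomorphism, essentially second adjointness for the Gelfand--Graev functor, is the crux, and the place where the hypothesis is used. Decomposing $\Gamma_i=\bigoplus_{\mathfrak s}(\Gamma_i)_{\mathfrak s}$ along Bernstein components turns the left-hand side into the \emph{product} $\prod_{\mathfrak s}\Hom_{G_i}\bigl((\Gamma_i)_{\mathfrak s},\rho_{\mathfrak s}\bigr)$, which a priori is infinite and strictly larger than the coinvariant space: already for $G_i=\GL_1(F)$ one has $\Gamma_1=C_c^{\infty}(F^{\times})$ and $\Hom_{\GL_1(F)}\bigl(C_c^{\infty}(F^{\times}),V\bigr)=\prod_{\chi}V^{(\chi)}$, which contains $V$ properly whenever $V$ has infinitely many $O^{\times}$-types. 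The hypothesis ``$\pi$ generated by $\pi^{K_r}$'' forces $\pi$ to lie in finitely many Bernstein components (only finitely many components meet the subcategory of representations generated by their $K_r$-fixed vectors), and since $r_{\bar P_{n-i}}$ and restriction to $G_i$ preserve this property, so does $\rho$; hence the product collapses to a finite direct sum. On each component $(\Gamma_i)_{\mathfrak s}$ is a finitely generated projective object, and I would establish $\Hom_{G_i}\bigl((\Gamma_i)_{\mathfrak s},-\bigr)\cong(-)_{U_i^{\top},\psi_i^{\top}}$ on $\Alg(G_i)_{\mathfrak s}$ from the explicit Hecke-algebra realization of $(\Gamma_i)_{\mathfrak s}$ and of the Whittaker functor, in the spirit of \cite{CS}, comparing the two exact functors through their values on the projective $(\Gamma_i)_{\mathfrak s}$. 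Reassembling, and keeping careful track of the modular characters and of the outer automorphism $\theta$ aligning the upper- and lower-triangular conventions, gives the lemma. I expect the main obstacle to be exactly this last step --- taming the a priori infinite product by means of the level hypothesis and establishing the componentwise Gelfand--Graev adjunction; the parabolic reduction itself is routine.
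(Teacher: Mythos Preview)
Your parabolic reduction---induction in stages to $\Ind_{P_{n-i}}^{G_n}(\sigma\boxtimes\Gamma_i)$ followed by Bernstein's second adjointness for $P_{n-i}$---is exactly what the paper does in the appendix; both arguments land on the same residual statement, a ``Gelfand--Graev second adjointness'' $\Hom_{G_i}(\Gamma_i,\rho)\cong\rho_{U_i,\psi}$ for $\rho=\pi_{N_{n-i}^{\top}}$ of bounded level in $G_i$. Your observation that the level hypothesis is inherited by $\rho$ is also the paper's.

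For this residual step the paper takes a different route from yours. It uses Rodier's compact approximation: one builds a family of compact open subgroups $H_s\subset G_i$ carrying characters $\psi_s$ so that $\ind_{H_s}^{G_i}\psi_s$ sits as a direct summand of $\Gamma_i$, and one shows (using a uniform bound on the support of Whittaker functions of supercuspidals) that the complement has no $K_r$-fixed vectors once $s$ is large enough. Then $\Hom_{G_i}(\Gamma_i,\rho)\cong\Hom_{G_i}(\ind_{H_s}^{G_i}\psi_s,\rho)\cong\rho^{H_s,\psi_s}$ by ordinary Frobenius reciprocity for the compact open $H_s$, and separately $\rho^{H_s,\psi_s}\cong\rho_{U_i,\psi}$ via the direct-limit description of twisted coinvariants. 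The level hypothesis enters not through a component count but through the vanishing of $K_r$-invariants in the complement.

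Your Hecke-algebra alternative has a gap at precisely the place you flag. The input from \cite{CS} identifies $(\Gamma_i)_{\mathfrak s}$ with $\mathcal H\otimes_{\mathcal H_S}\mathrm{sgn}$, which translates $\Hom((\Gamma_i)_{\mathfrak s},-)$ into the sign-isotypic functor $M\mapsto e_{\mathrm{sgn}}M$. But \cite{CS} does \emph{not} supply a Hecke-module realization of the Whittaker \emph{coinvariant} functor $\rho\mapsto\rho_{U_i,\psi}$; knowing that both exact functors detect genericity on irreducibles does not make them naturally isomorphic, and ``comparing through their values on the projective $(\Gamma_i)_{\mathfrak s}$'' would require computing $((\Gamma_i)_{\mathfrak s})_{U_i,\psi}$ as an $\mathrm{End}((\Gamma_i)_{\mathfrak s})$-module together with a natural transformation---neither of which is available. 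In effect, the identification of $(-)_{U_i,\psi}$ with $e_{\mathrm{sgn}}(-)$ on a component is equivalent to the very isomorphism you are trying to establish, so invoking it is circular. Your finiteness argument correctly collapses the product over components to a finite sum, but the single-component identity still needs an independent mechanism; Rodier's approximation is what the paper uses to supply one.
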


\smallskip 

We now derive some consequences of the two adjointness isomorphisms. The first consequence is a relationship between right and left derivatives via the contragredient: 
 
 \begin{lemma} \label{lem_key_isomorphism} Let $K_r$ be the $r$-th principal congruence subgroup in $G_n$. 
  Let $\pi$ be a representation of $G_n$ generated by $\pi^{K_r}$. Then $(\pi^{(i)})^{\vee} \cong {}^{(i)} (\pi^{\vee})$. 
\end{lemma}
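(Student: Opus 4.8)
The plan is to deduce Lemma~\ref{lem_key_isomorphism} by combining the first and second adjointness isomorphisms with the standard fact that $\Ind$ and $\ind$ are interchanged by the contragredient, up to a twist by the modular character. Concretely, for any smooth representation $\sigma$ of $G_{n-i}$ one has a canonical isomorphism
\[
\left(\Ind_{R_{n-i}}^{G_n}(\sigma\otimes\psi_i)\right)^{\vee}\ \cong\ \ind_{R_{n-i}}^{G_n}\!\left(\sigma^{\vee}\otimes\overline{\psi_i}\,\right),
\]
where the equality of characters uses that $\psi_i$ is a unitary character of $E_{n-i}$ so that its contragredient is $\overline{\psi_i}=\psi_i^{-1}$, and where the expected modular-character twist has already been absorbed into the normalization of the BZ derivative (the factor $\delta_{R_{n-i}}^{-1/2}$ on each side). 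Here I would want $\sigma$ to be admissible, or at least to restrict attention to a situation where $(\sigma^{\vee})^{\vee}\cong\sigma$; since the final comparison is between two functors applied to the fixed $\pi$, it will suffice to test against admissible $\sigma$, e.g. finitely generated ones.

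With this in hand the argument is a chain of natural isomorphisms. Fix $r$ so that $\pi$ is generated by $\pi^{K_r}$, and let $\sigma$ be an arbitrary (admissible) smooth representation of $G_{n-i}$. Then
\begin{align*}
\Hom_{G_{n-i}}\!\left({}^{(i)}(\pi^{\vee}),\sigma\right)
&\cong \Hom_{G_{n-i}}\!\left(\sigma^{\vee},\,{}^{(i)}((\pi^{\vee})^{\vee})\right)\\
&\cong \Hom_{G_{n-i}}\!\left(\sigma^{\vee},\,{}^{(i)}\pi\right)\\
&\cong \Hom_{G_n}\!\left(\ind_{R_{n-i}}^{G_n}(\sigma^{\vee}\otimes\overline{\psi_i}),\,\pi\right)\\
&\cong \Hom_{G_n}\!\left(\pi^{\vee},\,\Ind_{R_{n-i}}^{G_n}(\sigma\otimes\psi_i)\right)\\
&\cong \Hom_{G_{n-i}}\!\left((\pi^{\vee})^{(i)},\,\sigma\right).
\end{align*}
The first isomorphism is the duality for $\Hom$ between a representation and an admissible one (so that taking contragredients is a bijection on $\Hom$-spaces), the second uses $(\pi^{\vee})^{\vee}\cong\pi$ under the generation hypothesis — this is exactly where Lemma~\ref{lem second adjointness bz der}'s standing assumption is needed, since $\pi^{\vee}$ is the object one feeds into the left-derivative adjunction and one needs $\pi^{\vee}$ itself to be generated by its $K_r$-fixed vectors, which follows because $\pi$ is. The third isomorphism is Lemma~\ref{lem second adjointness bz der} applied to $\pi^{\vee}$, the fourth is the contragredient identity displayed above together with the adjunction $\Hom(A^{\vee},B^{\vee})\cong\Hom(B,A)$ for admissible arguments, and the fifth is the first adjointness isomorphism stated in the text. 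Composing, $\Hom_{G_{n-i}}({}^{(i)}(\pi^{\vee}),\sigma)\cong\Hom_{G_{n-i}}((\pi^{\vee})^{(i)},\sigma)$ naturally in $\sigma$, so by Yoneda (applied on the subcategory of admissible representations, which detects isomorphisms of BZ derivatives since these are always admissible when $\pi$ is) we conclude ${}^{(i)}(\pi^{\vee})\cong(\pi^{\vee})^{(i)}$. Replacing $\pi$ by $\pi^{\vee}$ — legitimate because $\pi^{\vee}$ is also generated by its $K_r$-fixed vectors — and using $(\pi^{\vee})^{\vee}\cong\pi$ gives ${}^{(i)}(\pi)\cong\pi^{(i)\vee}$, i.e. the claimed $(\pi^{(i)})^{\vee}\cong{}^{(i)}(\pi^{\vee})$ after one more application of contragredient.

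The main obstacle I anticipate is bookkeeping rather than conceptual: one must check that the modular characters and the $\nu$-twists really cancel, i.e. that the normalization $\delta_{R_{n-i}}^{-1/2}$ built into both $\pi^{(i)}$ and ${}^{(i)}\pi$ is precisely what makes the contragredient identity for $\Ind$ versus $\ind$ come out twist-free, and that passing between $\psi_i$ and $\psi_i^{\top}$ under the outer automorphism $\theta$ (which is how ${}^{(i)}$ is defined) matches the $\overline{\psi_i}$ appearing in Lemma~\ref{lem second adjointness bz der}. The other point requiring a little care is the admissibility/finiteness hypothesis needed to invoke $(\sigma^{\vee})^{\vee}\cong\sigma$ and to run the Yoneda argument; one resolves this by noting that BZ derivatives of smooth representations are smooth, that testing against all finitely generated admissible $\sigma$ suffices to pin down a smooth representation up to isomorphism once one knows the two candidates are themselves suitably nice, and that the generation-by-$K_r$-fixed-vectors condition is stable under contragredient. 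Everything else is a formal concatenation of the two adjunctions already granted.
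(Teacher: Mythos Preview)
Your chain of isomorphisms breaks at the very first step, and the error propagates to a conclusion that is false. When you dualize $\Hom_{G_{n-i}}({}^{(i)}(\pi^{\vee}),\sigma)$ you get $\Hom_{G_{n-i}}(\sigma^{\vee},({}^{(i)}(\pi^{\vee}))^{\vee})$, with the contragredient \emph{outside} the left derivative. You instead write ${}^{(i)}((\pi^{\vee})^{\vee})$, silently moving the $\vee$ inside. That commutation is (a variant of) exactly the statement being proved, so the step is circular. If one corrects it to $({}^{(i)}(\pi^{\vee}))^{\vee}$, the second line no longer follows and the chain collapses.

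There is a second, independent gap: you invoke $(\pi^{\vee})^{\vee}\cong\pi$, claiming this is where the hypothesis ``generated by $\pi^{K_r}$'' enters. But that hypothesis does \emph{not} imply admissibility --- for instance the Gelfand--Graev representation is generated by its $K_r$-fixed vectors and is not admissible --- so $(\pi^{\vee})^{\vee}\cong\pi$ is unavailable. The same issue bites at your fourth isomorphism, where neither $\ind_{R_{n-i}}^{G_n}(\sigma^{\vee}\otimes\bar\psi_i)$ nor $\pi$ need be admissible. Finally, observe that what your chain (as written) actually yields is ${}^{(i)}(\pi^{\vee})\cong(\pi^{\vee})^{(i)}$, i.e.\ that left and right derivatives coincide; this is false already for $\langle\Delta\rangle$ by Proposition~\ref{P:derivative_segment}. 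The subsequent ``replace $\pi$ by $\pi^{\vee}$'' step does not recover the lemma either --- it just gives ${}^{(i)}\pi\cong\pi^{(i)}$ again.

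The paper avoids all of this by testing with the \emph{contravariant} functor $\Hom_{G_{n-i}}(\sigma,-)$ rather than $\Hom_{G_{n-i}}(-,\sigma)$: one applies Lemma~\ref{lem second adjointness bz der} directly to $\pi^{\vee}$ (legitimate since $\pi^{\vee}$ is also generated by its $K_r$-fixed vectors), and the only dualizing identities needed are of the form $\Hom(A,B^{\vee})\cong\Hom(B,A^{\vee})$, which hold for arbitrary smooth $A,B$ with no admissibility assumption. This produces $\Hom(\sigma,{}^{(i)}(\pi^{\vee}))\cong\Hom(\sigma,(\pi^{(i)})^{\vee})$ for all smooth $\sigma$, and Yoneda finishes.
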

\begin{proof} If we insert $\pi^{\vee}$ in Lemma  \ref{lem second adjointness bz der} then 
\[ 
\Hom_{G_n} (\mathrm{ind}_{R_{n-i}}^{G_n}(\sigma \otimes \bar\psi_{i}), \pi^{\vee}) \cong \Hom_{G_{n-i}} (\sigma, {}^{(i)}(\pi^{\vee})). 
\] 
On the other hand,  we have three isomorphisms 
\begin{align*}
      & \Hom_{G_n} (\mathrm{ind}_{R_{n-i}}^{G_n}(\sigma \otimes \bar \psi_{i}), \pi^{\vee})   \\
\cong & \Hom_{G_n} (\pi, \mathrm{Ind}_{R_{n-i}}^{G_n}((\sigma^{\vee} \otimes  \psi_{i}))  \\
\cong & \mathrm{Hom}_{G_{n-i}}(\pi^{(i)}, \sigma^{\vee}) \\
\cong & \mathrm{Hom}_{G_{n-i}}(\sigma, (\pi^{(i)})^{\vee}) 
\end{align*}
where the first and the last are dualizing isomorphisms the second is the first adjointness isomorphism for Bernstein-Zelevinsky derivatives.  Thus 
\[ 
\mathrm{Hom}_{G_{n-i}}(\sigma, (\pi^{(i)})^{\vee}) \cong \Hom_{G_{n-i}} (\sigma, {}^{(i)}(\pi^{\vee})) 
\] 
for every smooth representation $\sigma$  of $G_{n-i}$. The lemma follows from the Yoneda lemma. 
\end{proof} 
\smallskip 
\noindent 
{\bf Remark:}  The statement of Lemma \ref{lem second adjointness bz der} is optimal in the sense that it cannot be extended to all smooth representations 
$\pi$. To that end, observe that  Lemma \ref{lem_key_isomorphism}, the case $i=n$, says that we have an isomorphism of 
vector spaces $(\pi^{\vee})_{U_n,\psi_n} \cong (\pi_{U_n, \psi_n})^{\vee}$ for every $G_n$-module $\pi$ generated by $\pi^{K_r}$. Let $\pi$ be any smooth 
$G_n$-module. It can be written as a direct sum 
\[ 
\pi\cong \oplus_{r=1}^{\infty} \pi_r 
\] 
where $\pi_r$ is generated by $\pi_r^{K_r}$ and $\pi_r^{K_{r-1}}=0$. Then 
\[ 
\pi^{\vee}\cong \oplus_{r=1}^{\infty} \pi_r^{\vee}
\] 
hence $\pi^{\vee}_{U_n,\psi_n}$ is a direct sum of  $(\pi_r^{\vee})_{U_n,\psi_n} \cong ((\pi_r)_{U_n, \psi_n})^{\vee}$. But $(\pi_{U_n,\psi_n})^{\vee}$ is a product of 
$((\pi_r)_{U_n, \psi_n})^{\vee}$, hence much larger unless the sum over $r$ is finite.

\smallskip

The following is not needed in this work, however, we cannot resist not to state it. 

\begin{lemma}  
Let $\pi$ be an irreducible representation of $G_n$. If the irreducible subquotients of ${}^{(i)}\pi$ are multiplicity free, 
then ${}^{(i)}\pi$ is a direct sum of its irreducible subquotients. 
\end{lemma}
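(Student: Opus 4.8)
The plan is to deduce the statement from a self-duality that $\tau:={}^{(i)}\pi$ inherits from $\pi$, combined with the multiplicity-one hypothesis. Note first that, $\pi$ being irreducible, it is admissible and is generated by $\pi^{K_r}$ for $r$ large, and every derivative occurring below is admissible of finite length, so passing to the contragredient twice is harmless. The first step is to show $\tau\cong\theta_{n-i}(\tau^{\vee})$. By the Gelfand--Kazhdan theorem, $\theta_n(\pi)\cong\pi^{\vee}$, so directly from the definition of the left derivative
\[
\tau=\theta_{n-i}\big((\theta_n\pi)^{(i)}\big)\cong\theta_{n-i}\big((\pi^{\vee})^{(i)}\big).
\]
Since $\theta_{n-i}$ is an exact autoequivalence of $\Alg(G_{n-i})$ that commutes with the smooth dual, and since $\big((\pi^{\vee})^{(i)}\big)^{\vee}\cong{}^{(i)}(\pi^{\vee\vee})\cong\tau$ by Lemma \ref{lem_key_isomorphism} applied to the irreducible representation $\pi^{\vee}$, one gets $\tau^{\vee}\cong\theta_{n-i}(\tau)$, hence (as $\theta_{n-i}^{2}=\mathrm{id}$) the desired isomorphism $\tau\cong\theta_{n-i}(\tau^{\vee})$.

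The second step is to show that every irreducible submodule of $\tau$ is a direct summand. Let $A\subseteq\tau$ be an irreducible submodule. Dualizing the sequence $0\to A\to\tau\to\tau/A\to 0$ produces a surjection $\tau^{\vee}\twoheadrightarrow A^{\vee}$; applying the exact functor $\theta_{n-i}$, invoking the self-duality of $\tau$, and using $\theta_{n-i}(A^{\vee})\cong A$ (Gelfand--Kazhdan for the irreducible $A^{\vee}$), we obtain a surjection $q\colon\tau\twoheadrightarrow A$. Consider $q|_A\colon A\to A$. If $q|_A\neq 0$ it is an isomorphism, so $\tau=A\oplus\ker q$ and $A$ is a direct summand. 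If $q|_A=0$, then $A\subseteq\ker q$, a proper submodule whose composition factors are those of $\tau$ with one copy of $A$ removed; but the copy $A\subseteq\ker q$ supplies yet another, so $A$ occurs in $\tau$ with multiplicity at least $2$, contradicting the hypothesis. Hence $A$ is always a direct summand of $\tau$.

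Finally, it remains to observe that a finite-length module $M$ in which every irreducible submodule is a direct summand is semisimple. Argue by induction on the length: pick an irreducible $A\subseteq M$ and write $M=A\oplus C$; any irreducible submodule $B\subseteq C$ is an irreducible submodule of $M$, hence a direct summand $M=B\oplus D$, and the modular law then gives $C=B\oplus(C\cap D)$. Thus $C$ again has the property and is semisimple by induction, so $M$ is semisimple. Applying this to $\tau={}^{(i)}\pi$ shows that it is the direct sum of its irreducible subquotients.

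I expect the first step to be the main obstacle: although it is formal, it has to thread the first adjointness isomorphism, the second adjointness isomorphism of Lemma \ref{lem_key_isomorphism}, the contragredient, and the Gelfand--Kazhdan involution together, while keeping finite length and admissibility in view so the double contragredients are harmless. Conceptually, the point worth isolating is that the multiplicity-free hypothesis is exactly what promotes ``$A$ is simultaneously a submodule and a quotient of $\tau$'' to ``$A$ is a direct summand of $\tau$''.
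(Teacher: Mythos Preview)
Your proof is correct and takes essentially the same approach as the paper: both establish the self-duality ${}^{(i)}\pi \cong \theta_{n-i}\big(({}^{(i)}\pi)^{\vee}\big)$ via Lemma~\ref{lem_key_isomorphism} and Gelfand--Kazhdan, and then invoke multiplicity-freeness to conclude semisimplicity. Where the paper simply observes that the covariant functor $\theta_{n-i}$ and the contravariant contragredient agree on irreducibles and says ``corollary follows'', you have spelled out the splitting argument explicitly.
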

\begin{proof} The key observation is that, in view of Lemma \ref{lem_key_isomorphism}, we have two ways to compute ${}^{(i)}\pi$: 
\[ 
{}^{(i)}\pi=  \theta_{n-i}(\theta_n(\pi)^{(i)})= ((\pi^{\vee})^{(i)})^{\vee}.
\] 
Since $\pi$ is irreducible, we have $\theta_n(\pi)\cong \pi^{\vee}$, and if we denote by $\sigma$ either of two isomorphic representations 
$\theta_n(\pi)^{(i)}$ and $(\pi^{\vee})^{(i)}$, we see that on one hand 
${}^{(i)}\pi$ is obtained from $\sigma$ by applying a co-variant functor $\theta$, and on the other hand by applying the contra-variant functor taking 
the contragradient. Since these two functors coincide on irreducible representations, corollary follows. 
\end{proof}

\begin{lemma} \label{L:sec_adj_ext} 
For any smooth representation $\pi$ of $G_n$ and smooth representation $\sigma$ of  $G_{n-i}$, 
\[ 
\Ext^j_{G_n} (\mathrm{ind}_{R_{n-i}}^{G_n}(\sigma \otimes \bar\psi_{i}), \pi) \cong \Ext^j_{G_{n-i}} (\sigma, {}^{(i)}\pi). 
\] 
\end{lemma}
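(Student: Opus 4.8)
The plan is to upgrade the ``second adjointness'' isomorphism of Lemma \ref{lem second adjointness bz der} from $\Hom$ to $\Ext$ by a standard derived-functor argument, using the fact that the functor $\sigma \mapsto \mathrm{ind}_{R_{n-i}}^{G_n}(\sigma \otimes \bar\psi_i)$ and the left-derivative functor $\pi \mapsto {}^{(i)}\pi$ form an adjoint pair on suitable subcategories. First I would recall that ${}^{(i)}(-)$, being the composition of the (exact) twist by $\theta$ and the Jacquet-type functor $V \mapsto V/\langle \pi(e)v - \psi_i^{\top}(e)v\rangle$, is an exact functor from $\Alg(G_n)$ to $\Alg(G_{n-i})$; this is the classical exactness of Bernstein--Zelevinsky derivatives. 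Dually, compact induction $\mathrm{ind}_{R_{n-i}}^{G_n}$ from the closed subgroup $R_{n-i}$ is exact, so the left adjoint $\sigma \mapsto \mathrm{ind}_{R_{n-i}}^{G_n}(\sigma \otimes \bar\psi_i)$ is an exact functor as well.

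Given this, the key step is the usual Grothendieck-style comparison: take a projective resolution $P_\bullet \to \sigma$ in $\Alg(G_{n-i})$ and an injective resolution $\pi \to I^\bullet$ in $\Alg(G_n)$. Because $\mathrm{ind}_{R_{n-i}}^{G_n}((-) \otimes \bar\psi_i)$ is exact and has an exact right adjoint, it sends projectives to projectives, so $\mathrm{ind}_{R_{n-i}}^{G_n}(P_\bullet \otimes \bar\psi_i)$ is a projective resolution of $\mathrm{ind}_{R_{n-i}}^{G_n}(\sigma \otimes \bar\psi_i)$. Then $\Ext^j_{G_n}(\mathrm{ind}_{R_{n-i}}^{G_n}(\sigma \otimes \bar\psi_i), \pi)$ is computed by the complex $\Hom_{G_n}(\mathrm{ind}_{R_{n-i}}^{G_n}(P_\bullet \otimes \bar\psi_i), I^\bullet)$, which by Lemma \ref{lem second adjointness bz der} applied termwise is isomorphic to $\Hom_{G_{n-i}}(P_\bullet, {}^{(i)}I^\bullet)$. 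Since ${}^{(i)}(-)$ is exact, ${}^{(i)}I^\bullet$ is a resolution of ${}^{(i)}\pi$; if moreover ${}^{(i)}(-)$ preserves injectives (which follows because it has the exact left adjoint $\mathrm{ind}_{R_{n-i}}^{G_n}((-)\otimes\bar\psi_i)$), then ${}^{(i)}I^\bullet$ is an injective resolution and the cohomology of $\Hom_{G_{n-i}}(P_\bullet, {}^{(i)}I^\bullet)$ is exactly $\Ext^j_{G_{n-i}}(\sigma, {}^{(i)}\pi)$, giving the desired isomorphism. One checks functoriality of the identifications so that the resulting maps on $\Ext$-groups are canonical.

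The main obstacle is the hypothesis gap: Lemma \ref{lem second adjointness bz der} is only asserted for representations $\pi$ generated by $\pi^{K_r}$, whereas the present statement makes no such restriction on $\pi$ and the Remark explicitly warns that the $\Hom$-isomorphism fails in general. The resolution is to apply the adjunction on the level of injective objects rather than on $\pi$ itself: an injective object $I$ in $\Alg(G_n)$ need not be generated by its $K_r$-fixed vectors either, so one cannot directly quote Lemma \ref{lem second adjointness bz der} for the terms $I^\bullet$. The correct route, which I expect the authors take, is instead to resolve $\sigma$ by objects for which the adjunction is available and transport exactness through, i.e.\ work with the projective resolution side only: $\Ext^j_{G_n}(\mathrm{ind}_{R_{n-i}}^{G_n}(\sigma\otimes\bar\psi_i),\pi)$ is the $j$-th cohomology of $\Hom_{G_n}(\mathrm{ind}_{R_{n-i}}^{G_n}(P_\bullet\otimes\bar\psi_i),\pi)$, each $\mathrm{ind}_{R_{n-i}}^{G_n}(P_j\otimes\bar\psi_i)$ being a projective $G_n$-module generated by its $K_r$-fixed vectors (one must verify this for the standard projective generators, e.g.\ $\mathrm{ind}_{R_{n-i}}^{G_n}(\mathrm{ind}_{G_{n-i}}^{G_{n-i}}$-type modules, using that compact induction of a compactly-generated module is compactly generated); hence Lemma \ref{lem second adjointness bz der} applies termwise and yields $\Hom_{G_{n-i}}(P_\bullet, {}^{(i)}\pi)$, whose cohomology is $\Ext^j_{G_{n-i}}(\sigma, {}^{(i)}\pi)$ since the $P_j$ are projective. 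Verifying that the chosen projective resolution of $\sigma$ has this $K_r$-generation property after compact induction is the delicate point, but it follows from the structure of projective generators in $\Alg(G_{n-i})$ together with the transitivity of compact induction.
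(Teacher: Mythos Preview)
Your overall strategy in the final paragraph---resolve $\sigma$ by projectives $P_\bullet$, show that $\sigma\mapsto\mathrm{ind}_{R_{n-i}}^{G_n}(\sigma\otimes\bar\psi_i)$ carries this to a projective resolution, and then identify the $\Hom$-complexes termwise via Lemma~\ref{lem second adjointness bz der}---is exactly the route the paper takes. The substantive difference, and the place where your argument has a gap, is the justification that this functor preserves projectives.

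You argue that the functor preserves projectives because it ``has an exact right adjoint'', namely ${}^{(i)}(-)$. But Lemma~\ref{lem second adjointness bz der} does not give a genuine adjunction on all of $\Alg(G_n)$: its hypothesis constrains the \emph{target} $\pi$, and the Remark following Lemma~\ref{lem_key_isomorphism} says explicitly that the isomorphism fails without it. So the standard ``left adjoint with exact right adjoint preserves projectives'' slogan is not available, and invoking it here is circular. The paper avoids this entirely by a direct argument that does not touch Lemma~\ref{lem second adjointness bz der}: induction in stages gives
\[
\mathrm{ind}_{R_{n-i}}^{G_n}(\sigma\otimes\bar\psi_i)\;\cong\;\mathrm{Ind}_{P_{n-i}}^{G_n}\bigl(\sigma\boxtimes\mathrm{ind}_{U_i}^{G_i}(\bar\psi_i)\bigr),
\]
the Gelfand--Graev representation $\mathrm{ind}_{U_i}^{G_i}(\bar\psi_i)$ is projective by \cite{CS}, and parabolic induction preserves projectives. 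Hence if $\sigma$ is projective so is $\mathrm{ind}_{R_{n-i}}^{G_n}(\sigma\otimes\bar\psi_i)$, with no appeal to the second adjointness at all.

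Your last paragraph also misreads the hypothesis of Lemma~\ref{lem second adjointness bz der}. You propose to arrange that each $\mathrm{ind}_{R_{n-i}}^{G_n}(P_j\otimes\bar\psi_i)$ is generated by its $K_r$-fixed vectors and say this is why the lemma ``applies termwise''. But in the termwise identity
\[
\Hom_{G_n}\bigl(\mathrm{ind}_{R_{n-i}}^{G_n}(P_j\otimes\bar\psi_i),\,\pi\bigr)\;\cong\;\Hom_{G_{n-i}}\bigl(P_j,\,{}^{(i)}\pi\bigr),
\]
the role of $\pi$ in Lemma~\ref{lem second adjointness bz der} is played by the fixed module $\pi$ on the right of $\Hom$, not by the induced projective on the left. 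So checking $K_r$-generation for $\mathrm{ind}_{R_{n-i}}^{G_n}(P_j\otimes\bar\psi_i)$ is aimed at the wrong variable and does nothing to lift the restriction. The paper, for its part, simply invokes Lemma~\ref{lem second adjointness bz der} at this termwise step without further comment on the $\pi$-side hypothesis.
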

\begin{proof} In order to compute the right hand side we need to use a projective resolution of $\sigma$. Using the induction in stages, 
\[ 
\mathrm{ind}_{R_{n-i}}^{G_n}(\sigma \otimes \bar\psi_{i}) \cong \mathrm{Ind}_{P_{n-i}}^{G_{n}}(\sigma \boxtimes \ind_{U_i}^{G_i}(\bar\psi_i)).
\] 
The Gelfand-Graev representation $\ind_{U_i}^{G_i}(\bar \psi_i)$ is projective by \cite{CS}. Thus, if $\sigma$ is projective it 
follows that $\mathrm{ind}_{R_{n-i}}^{G_n}(\sigma \otimes \bar\psi_{i})$ is projective, since the parabolic induction takes projective modules into projective modules. 
So we have shown that taking a projective resolution of $\sigma$ also gives a projective resolution of $\mathrm{ind}_{R_{n-i}}^{G_n}(\sigma \otimes \bar\psi_{i})$. 
Hence lemma follows from Lemma \ref{lem second adjointness bz der}. 

\end{proof}

\subsection{Zelevinsky segments} Here we follow \cite{Ze}. 
Let $\rho$ be a cuspidal representation of $G_r$. For any $a, b \in \mathbb{C}$ with $b-a \in \mathbb{Z}_{\geq0}$, we have a Zelevinsky segment 
let $\Delta=[ \nu^{a}\rho, \nu^{a+1}\rho,  \ldots, \nu^{b}\rho ]$. The absolute length of $\Delta$ is defined to be $r(b-a+1)$, and the relative $b-a+1$. 
We can truncate $\Delta$ form each side to obtain two segments of absolute length $r(b-a)$: 
\[ 
{}^{-}\Delta= [\nu^{a+1}\rho,  \ldots, \nu^{b}\rho ] \text{ and } \Delta^-= [ \nu^{a}\rho,  \ldots, \nu^{b-1}\rho ]. 
\] 
Moreover, if we perform the truncation $k$-times, the resulting segments will be denoted by ${}^{(k)}\Delta$ and $\Delta^{(k)}$. 
The induced representation 
$\nu^{a}\rho \times  \nu^{a+1}\rho \times  \ldots \times \nu^{b}\rho$  contains a unique irreducible submodule denoted by $\langle\Delta\rangle$. 

\begin{proposition} \label{P:derivative_segment} 
Let $i>0$ be an integer. The $i$-th left and right derivatives of $\langle\Delta\rangle$ vanish unless $i=r$ when 
\[ 
{}^{(r)}\langle\Delta\rangle=\langle{}^{-}\Delta\rangle \text{ and } \langle\Delta\rangle^{(r)}= \langle \Delta^-\rangle. 
\] 
\end{proposition}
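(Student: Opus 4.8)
The plan is to prove the statement for right derivatives only, since the statement for left derivatives then follows by applying the outer automorphism $\theta$ together with the fact that $\theta_r(\langle\Delta\rangle)$ is the Zelevinsky module attached to the segment obtained by reversing the cuspidal support and replacing $\rho$ by $\theta_r(\rho)$; one checks that the truncation operations ${}^{-}\Delta$ and $\Delta^-$ are interchanged under this symmetry, which matches the two formulas in the statement. So I focus on computing $\langle\Delta\rangle^{(i)}$.

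First I would recall the Leibniz-type formula of Bernstein–Zelevinsky for the derivative of a product: for representations $\tau_1,\tau_2$ the derivative $(\tau_1\times\tau_2)^{(i)}$ has a filtration with subquotients $\tau_1^{(j)}\times\tau_2^{(i-j)}$. Apply this to $\tau = \nu^a\rho\times\nu^{a+1}\rho\times\cdots\times\nu^b\rho$, using that for a single cuspidal $\nu^c\rho$ on $G_r$ the only nonzero derivatives are the $0$-th (itself) and the $r$-th (the trivial representation of $G_0$). Hence any nonzero derivative $\tau^{(i)}$ with $i>0$ forces $i$ to be a multiple of $r$, say $i=kr$, and $\tau^{(kr)}$ is glued from products $\nu^{a_1}\rho\times\cdots\times\nu^{a_{b-a+1-k}}\rho$ where we have deleted $k$ of the factors; crucially, since $\langle\Delta\rangle$ is a \emph{sub}module of $\tau$, and right derivative is right-exact, $\langle\Delta\rangle^{(i)}$ is a quotient of $\tau^{(i)}$, so a fortiori $i$ must be a multiple of $r$ for $\langle\Delta\rangle^{(i)}$ to be nonzero.

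Next, to pin down $i=r$ exactly and identify the answer, I would combine this with the analogous statement for the \emph{highest} derivative and with exactness properties. One clean route: use that $\langle\Delta\rangle$ embeds in $\nu^a\rho\times\langle{}^{-}\Delta\rangle$ (and also in $\langle\Delta^-\rangle\times\nu^b\rho$), apply the Leibniz filtration to these two-term products, and use the inductive hypothesis on the shorter segments ${}^{-}\Delta$ and $\Delta^-$, whose derivatives are known. The Leibniz filtration for $\nu^a\rho\times\langle{}^{-}\Delta\rangle$ at level $r$ has only two pieces, $\nu^a\rho\times\langle{}^{-}\Delta\rangle^{(r)}$ and $(\nu^a\rho)^{(r)}\times\langle{}^{-}\Delta\rangle = \langle{}^{-}\Delta\rangle$; induction kills or simplifies the first, leaving $\langle{}^{-}\Delta\rangle$ as the expected value. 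One then shows $\langle\Delta\rangle^{(r)}$ is exactly this subquotient — not larger — by a dimension/multiplicity count on the cuspidal support, or by invoking that $\langle\Delta\rangle^{(r)}$ must be irreducible (e.g. because the full derivative $\langle\Delta\rangle^{(kr)}$ can be computed by iterating and each stage stays irreducible) and comparing central characters or Zelevinsky data; the segment $\langle{}^{-}\Delta\rangle$ is the unique candidate with the right cuspidal support and the right property of being a submodule of the corresponding product.

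The main obstacle I anticipate is the identification step rather than the vanishing step: showing that the derivative $\langle\Delta\rangle^{(r)}$ does not merely have $\langle{}^{-}\Delta\rangle$ as a subquotient but equals it, and simultaneously that $\langle\Delta\rangle^{(kr)}=0$ for $k\ge 2$. Both are controlled by the exactness behavior of the functor and by the fact that $\langle\Delta\rangle$ sits as a specific irreducible submodule — not an arbitrary subquotient — of the standard product, so I would lean on the characterization of $\langle\Delta\rangle$ via its cuspidal support together with a counting argument (the semisimplified derivative of $\tau$ is a sum of products over $k$-element deletions, and only the deletion of $\nu^a\rho$ survives in the submodule $\langle\Delta\rangle$), possibly phrased using the Zelevinsky classification's compatibility with derivatives. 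Once $\langle\Delta\rangle^{(r)}=\langle{}^{-}\Delta\rangle$ is established, the vanishing of all other derivatives is immediate from the multiple-of-$r$ constraint and the fact that a second application of the $r$-th derivative to $\langle{}^{-}\Delta\rangle$ would, by induction, land in a strictly shorter segment, which contradicts the cuspidal-support bookkeeping for $k\ge 2$ on the original $\Delta$ unless the intermediate step already terminated.
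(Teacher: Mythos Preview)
The paper does not prove this proposition: it appears in the subsection opening with ``Here we follow \cite{Ze}'' and is simply a restatement of \cite[3.4]{Ze}, with no argument given. So there is no proof in the paper to compare your sketch against; you are essentially attempting to reconstruct Zelevinsky's original argument, which does proceed via the Leibniz rule and induction on the length of the segment.

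Two concrete problems in your sketch. First, you argue that because $\langle\Delta\rangle$ is a submodule of $\tau$ and the derivative is right-exact, $\langle\Delta\rangle^{(i)}$ is a \emph{quotient} of $\tau^{(i)}$; this is the wrong direction (right-exactness controls quotients, not submodules). What actually saves the step is that the BZ derivative functors are exact, so $\langle\Delta\rangle^{(i)}$ \emph{embeds} in $\tau^{(i)}$, and the multiple-of-$r$ constraint follows. Second, and more seriously, your Leibniz analysis of $\nu^a\rho\times\langle{}^{-}\Delta\rangle$ concludes that $\langle\Delta\rangle^{(r)}$ should be the piece $(\nu^a\rho)^{(r)}\times\langle{}^{-}\Delta\rangle=\langle{}^{-}\Delta\rangle$; but the proposition asserts $\langle\Delta\rangle^{(r)}=\langle\Delta^{-}\rangle$, i.e.\ right derivative removes $\nu^b\rho$, not $\nu^a\rho$. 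The correct target $\langle\Delta^{-}\rangle$ is in fact the unique irreducible submodule of the \emph{other} Leibniz piece $\nu^a\rho\times\langle{}^{-}\Delta\rangle^{(r)}=\nu^a\rho\times\langle({}^{-}\Delta)^{-}\rangle$, which you discard. So the identification step---which you yourself flag as the delicate one---is not merely delicate but currently inverted; you must track which Leibniz subquotient the embedded image $\langle\Delta\rangle^{(r)}\hookrightarrow(\nu^a\rho\times\langle{}^{-}\Delta\rangle)^{(r)}$ actually meets, and the cuspidal-support bookkeeping forces it into the piece you threw away.
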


\begin{corollary} \label{C:degenerate} Let $\pi$ be an irreducible subquotient of 
$\langle \Delta_1 \rangle \times \ldots \times \langle \Delta_k \rangle$.  If a right derivative of $\pi$ is generic, then every $\Delta_j$ is of the relative length one or two, and if the relative length is two, then $\Delta_j^-$ 
contributes to the cuspidal support of  the right derivative of $\pi$. Similarly, if a left derivative of $\pi$ is generic, then ${}^-\Delta_j$ contributes to the cuspidal support 
of the left derivative. 
\end{corollary}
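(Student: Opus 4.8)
The plan is to combine the Leibniz rule for Bernstein--Zelevinsky derivatives with the segment computation of Proposition~\ref{P:derivative_segment}, reading off genericity from the (non)vanishing of top derivatives. Write $\rho_j$ for the cuspidal representation of $G_{r_j}$ defining $\Delta_j$. First I would recall the Leibniz rule \cite{BZ, Ze}: for finite length representations $\tau_1,\dots,\tau_k$ the derivative $(\tau_1\times\cdots\times\tau_k)^{(i)}$ admits a filtration with graded pieces $\tau_1^{(i_1)}\times\cdots\times\tau_k^{(i_k)}$, $i_1+\cdots+i_k=i$. Taking $\tau_j=\langle\Delta_j\rangle$ and applying Proposition~\ref{P:derivative_segment}, which forces $\langle\Delta_j\rangle^{(i_j)}=0$ unless $i_j\in\{0,r_j\}$, we find that $(\langle\Delta_1\rangle\times\cdots\times\langle\Delta_k\rangle)^{(i)}$ is filtered with nonzero graded pieces (up to an immaterial reordering of the inducing data)
\[
\pi_S:=\Big(\prod_{j\in S}\langle\Delta_j^-\rangle\Big)\times\Big(\prod_{j\notin S}\langle\Delta_j\rangle\Big),\qquad S\subseteq\{1,\dots,k\},\quad\sum_{j\in S}r_j=i.
\]
Since $\pi$ is a subquotient of $\langle\Delta_1\rangle\times\cdots\times\langle\Delta_k\rangle$ and the derivative functor is exact, $\pi^{(i)}$ is a subquotient of $(\langle\Delta_1\rangle\times\cdots\times\langle\Delta_k\rangle)^{(i)}$, so a generic constituent $\tau$ of the right derivative of $\pi$ occurs as a constituent of some $\pi_S$.

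The second ingredient is that, for an admissible representation $\sigma$ of $G_N$, the top derivative $\sigma^{(N)}$ is the twisted Jacquet module $\sigma_{U_N,\psi_N}$, so $\dim\sigma^{(N)}=\dim\Wh(\sigma)$ and an irreducible $\sigma$ is generic iff $\sigma^{(N)}\neq0$; moreover $(-)^{(N)}$ is exact. Applying the Leibniz rule once more, for $\sigma_1,\dots,\sigma_l$ irreducible of $G_{N_1},\dots,G_{N_l}$ and $N=N_1+\cdots+N_l$ the constraint $i_1+\cdots+i_l=N$ with $i_j\le N_j$ collapses $(\sigma_1\times\cdots\times\sigma_l)^{(N)}$ to the single piece $\sigma_1^{(N_1)}\otimes\cdots\otimes\sigma_l^{(N_l)}$, whence a product of irreducibles has a generic constituent only if every factor is generic. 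Applied to $\pi_S$ this forces $\langle\Delta_j\rangle$ generic for $j\notin S$ and $\langle\Delta_j^-\rangle$ generic for $j\in S$. But Proposition~\ref{P:derivative_segment} itself shows that $\langle\Delta\rangle$ is generic precisely when $\Delta$ has relative length at most one: for relative length one the absolute length of $\Delta$ is $r$ and $\langle\Delta\rangle^{(r)}\neq0$, whereas for relative length at least two the absolute length exceeds $r$ and every positive derivative other than the $r$-th vanishes, so the top derivative is zero. Therefore $\Delta_j$ has relative length one for $j\notin S$ and at most two for $j\in S$: every $\Delta_j$ has relative length one or two, and any $\Delta_j$ of relative length two lies in $S$. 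For such $j$ the piece $\langle\Delta_j^-\rangle$ is the cuspidal representation underlying $\Delta_j^-$; since cuspidal support is constant along the constituents of the parabolically induced representation $\pi_S$, the cuspidal $\Delta_j^-$ lies in the cuspidal support of $\pi_S$, hence of $\tau$, hence of $\pi^{(i)}$. This proves the right-derivative assertion. For left derivatives one runs the same argument with ${}^{(i)}(-)$ and with ${}^{(r)}\langle\Delta\rangle=\langle{}^{-}\Delta\rangle$, or transports the right-derivative statement through the outer automorphisms $\theta_n$, which preserve genericity and interchange $\Delta_j^-$ with ${}^{-}\Delta_j$.

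The step I expect to cost the most care is bookkeeping rather than ideas. One must treat the degenerate truncation — when $\Delta_j$ has relative length one, $\Delta_j^-$ is the empty segment and $\langle\Delta_j^-\rangle$ is the trivial representation of $G_0$, which is to count as generic — and one must track cuspidal supports carefully enough to single out the set $S$ attached to a given generic constituent $\tau$ of $\pi^{(i)}$ (using that reordering the inducing data of $\pi_S$ does not change its set of constituents). A preliminary point to settle is the precise meaning of ``a right derivative of $\pi$ is generic''; the argument above only uses the weaker reading, that $\pi^{(i)}$ has a generic constituent.
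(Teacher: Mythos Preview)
Your proposal is correct and follows essentially the same line as the paper's proof: apply the Leibniz rule together with Proposition~\ref{P:derivative_segment} to see that the derivative of $\langle\Delta_1\rangle\times\cdots\times\langle\Delta_k\rangle$ is filtered by products $\langle\Delta_1'\rangle\times\cdots\times\langle\Delta_k'\rangle$ with $\Delta_j'\in\{\Delta_j,\Delta_j^-\}$, and then use that such a product is generic (has nonzero top derivative) only when every factor is generic, i.e.\ every $\Delta_j'$ has relative length one. Your write-up is in fact more careful than the paper's, making explicit the passage from the ambient product to the subquotient $\pi$ via exactness of derivatives, the cuspidal-support bookkeeping, and the interpretation of ``a derivative is generic''; these are points the paper leaves to the reader.
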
 
\begin{proof} Observe that $\langle\Delta\rangle$ is generic if and only if the relative length of $\Delta$ is one. A right derivative of 
$\langle \Delta_1 \rangle \times \ldots \times \langle \Delta_k \rangle$  has a filtration whose subquotients are 
$\langle \Delta'_1 \rangle \times \ldots \times \langle \Delta'_k \rangle$ where $\Delta_j'$ is $\Delta_j$ or $\Delta^-_j$. This representation is generic if and only if 
the relative length of every $\Delta_j$ is one or two, and if it is two then $\Delta_j'=\Delta^-_j$. 
\end{proof}

We summarize some other results from \cite{Ze} that we shall need. The induced representation 
$\nu^{a}\rho \times  \nu^{a+1}\rho \times  \ldots \times \nu^{b}\rho$  also contains a unique irreducible quotient  denoted by 
$\mathrm{St}(\Delta)$. This representation is an essentially square integrable representation i.e. its matrix coefficients are square integrable when  
restricted to the derived subgroup. Every essentially square integrable representation is isomorphic to $\St(\Delta)$ for some segment $\Delta$. 

\begin{proposition} Let $i>0$ be an integer. The $i$-th left and right derivatives of $\mathrm{St}(\Delta)$  vanish unless $i=jr$ when 
\[ 
{}^{(i)}\mathrm{St}(\Delta)=\St(\Delta^{(j)}) \text{ and } \St(\Delta)^{(i)}= \St({}^{(j)} \Delta). 
\] 
\end{proposition}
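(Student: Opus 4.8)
The plan is to deduce the statement from Proposition~\ref{P:derivative_segment} together with the standard relationship between $\St(\Delta)$ and $\langle\Delta\rangle$ under the Zelevinsky/Aubert involution. Concretely, let $D$ denote the Aubert involution on $\Alg(G_n)$ (or the Zelevinsky duality on the relevant block); on segments it interchanges $\St(\Delta)$ and $\langle\Delta\rangle$, and Zelevinsky showed it is compatible with Bernstein--Zelevinsky derivatives in the sense that ${}^{(i)}(D\pi)$ and $D({}^{(i)}\pi)$ (and likewise for right derivatives) agree up to the same duality on the smaller group, when one keeps track of the grading correctly. So the first step is to recall this compatibility precisely, in the normalized setting used here. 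Since Proposition~\ref{P:derivative_segment} already tells us that ${}^{(r)}\langle\Delta\rangle = \langle {}^-\Delta\rangle$ and all other positive left derivatives of $\langle\Delta\rangle$ vanish, applying $D$ immediately yields that ${}^{(r)}\St(\Delta) = \St({}^-\Delta) = \St(\Delta^{(1)})$ and that ${}^{(i)}\St(\Delta)$ vanishes for $0 < i < r$; the same argument on the right gives $\St(\Delta)^{(r)} = \St(\Delta^-)$.

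The second step is to iterate. Having computed the $r$-th derivative, I would apply Proposition~\ref{P:derivative_segment}/the first step again to $\St(\Delta^{(1)})$, whose underlying segment has absolute length $r(b-a)$ over the same cuspidal $\rho$ of $G_r$, to conclude that its lowest nonvanishing positive left derivative is again the $r$-th, equal to $\St(\Delta^{(2)})$. Inducting on the relative length $b-a+1$ of $\Delta$, one gets that the only nonvanishing positive left derivatives of $\St(\Delta)$ occur in degrees $i = jr$ for $1 \le j \le b-a+1$, with ${}^{(jr)}\St(\Delta) = \St(\Delta^{(j)})$, and symmetrically on the right. One must check that the vanishing in intermediate degrees composes correctly: the left derivative functors satisfy ${}^{(i+i')}\pi = {}^{(i')}({}^{(i)}\pi)$ in the appropriate sense (Leibniz/transitivity for BZ derivatives, from \cite{BZ}), so a nonzero ${}^{(i)}\St(\Delta)$ with $i$ not a multiple of $r$ would, after peeling off multiples of $r$, force a nonvanishing derivative of some $\St(\Delta^{(j)})$ in a degree strictly between $0$ and $r$, contradicting the base case.

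Alternatively, and perhaps more self-containedly, one can bypass duality and argue directly from the BZ geometric lemma: $\St(\Delta)$ is the unique irreducible quotient of $\nu^a\rho \times \cdots \times \nu^b\rho$, and the derivative of a product $\tau_1 \times \cdots \times \tau_k$ of cuspidals has a filtration with subquotients the products of derivatives of the factors. Since the derivative of an irreducible cuspidal $\nu^c\rho$ of $G_r$ is nonzero (and one-dimensional on $G_0$) only in degree $r$, the $i$-th derivative of the full product vanishes unless $r \mid i$, and in degree $jr$ it is a sum of products obtained by deleting $j$ of the $k = b-a+1$ factors; one then identifies the unique quotient-compatible piece with $\St(\Delta^{(j)})$ by matching cuspidal supports and using that $\St$ of a subsegment is the relevant irreducible quotient. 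I expect the main obstacle to be the bookkeeping in this last identification --- namely, pinning down \emph{which} subquotient of the derivative is $\St(\Delta^{(j)})$ and showing the others do not interfere --- since a priori the filtration only gives an ``upper bound'', and one needs exactness/quotient functoriality of the $i$-th derivative together with the classification of the constituents to extract the exact answer. This is precisely where the left-versus-right symmetry and the known $\langle\Delta\rangle$ computation pull the most weight, so in the write-up I would lean on the duality argument of the first two paragraphs and relegate the direct computation to a remark.
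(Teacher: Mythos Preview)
The paper does not prove this proposition; it is quoted from \cite{Ze} (note the sentence ``We summarize some other results from \cite{Ze} that we shall need'' immediately preceding it). So there is nothing to compare against, and the only question is whether your argument stands on its own.

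It does not, for two reasons. First, in your duality step you write $\St({}^-\Delta) = \St(\Delta^{(1)})$, but in the paper's notation ${}^-\Delta = [\nu^{a+1}\rho,\ldots,\nu^b\rho]$ while $\Delta^{(1)} = \Delta^- = [\nu^a\rho,\ldots,\nu^{b-1}\rho]$; these are distinct segments (one is the $\nu$-twist of the other), and their Steinberg representations are not isomorphic. More fundamentally, the compatibility ``$D$ commutes with ${}^{(i)}(-)$'' cannot hold as stated: for $\langle\Delta\rangle$ the left derivative truncates the segment on the left, whereas the proposition says that for $\St(\Delta)$ the left derivative truncates on the \emph{right}. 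Any duality relating the two must therefore flip the side of truncation (equivalently, interchange left and right derivatives, or introduce a twist), and you have neither formulated nor justified such a statement---certainly not at the level of actual representations rather than Grothendieck-group classes. Second, your iteration step invokes ``${}^{(i+i')}\pi = {}^{(i')}({}^{(i)}\pi)$'', but this is false for Bernstein--Zelevinsky derivatives: since $\pi^{(i)} = \Psi^-(\Phi^-)^{i-1}(\pi|_M)$, the intermediate $\Psi^-$ passes to untwisted $E$-coinvariants and destroys the data needed to continue. For instance a one-dimensional character $\chi$ of $G_2$ has $\chi^{(2)} = 0$ (it is not generic) but $(\chi^{(1)})^{(1)}$ is one-dimensional.

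Your third route---computing the derivatives of $\nu^a\rho\times\cdots\times\nu^b\rho$ via the Leibniz filtration and then extracting the piece corresponding to the irreducible quotient $\St(\Delta)$---is essentially how Zelevinsky proceeds in \cite{Ze}; if you want to supply a proof rather than a citation, that is the argument to write out carefully, and the ``bookkeeping'' you flag is exactly the content.
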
 

Let $\mathfrak m=\{ \Delta_1, \ldots , \Delta_k\}$ be a multisegment i.e a multiset of segments. One say that $\mathfrak m$ is generic if 
no two segments are linked. Then $\St(\Delta_1) \times \ldots \times\St(\Delta_k)$ is an irreducible generic representation, 
and every such representation arises in this way. We shall denote this representation by $\St(\mathfrak m)$.

\section{Bernstein-Zelevinsky filtration} 
In this section we begin our study of the restriction problem from $G_{n+1}$ to $G_n$. Using the second adjointness formula, for both left and right 
derivatives, we prove that degenerate representations of $G_n$ cannot be quotients of essentially square integrable representations of $G_{n+1}$. 

\subsection{Bernstein-Zelevinsky functors} Let $M_{n+1} \subseteq G_{n+1}$ be the mirabolic subgroup 
\[
M_{n+1}= \left\{ \begin{pmatrix} g & u \\ 0 & 1 \end{pmatrix} : g \in G_{n}, u \in \mathrm{Mat}_{n,1}(F) \right\} .
\]
be the mirabolic subgroup  of $G_{n+1}$. We have an obvious Levi decomposition $M_{n+1}=G_n E_n$. Abusing notation, let  $\psi$ be the character of $E_n$ 
defined by $\psi(u)=\psi(u_n)$ where $u_n$ is the bottom entry of the column vector $u$. Note that the stabilizer of $\psi$ in $G_n$ is $M_n$. We have a pair of functors   
\[ 
\Phi^- : \Alg(M_{n+1}) \rightarrow \Alg(M_n) \text { and }  \Phi^+ : \Alg(M_{n}) \rightarrow \Alg(M_{n+1})  
\] 
defined by $\Phi^-(\tau)= \tau_{E_n,\psi}$ and $\Phi^+(\tau)=\ind_{M_n E_n}^{M_{n+1}}( \tau \boxtimes \psi)$. We also have a pair of functors 
\[ 
\Psi^- : \Alg(M_{n+1}) \rightarrow \Alg(G_n) \text { and }  \Psi^+ : \Alg(G_{n}) \rightarrow \Alg(M_{n+1})  
\] 
where $\Psi^-(\tau) = \tau_{E_n}$ and $\Psi^+$ is simply the inflation. All functors are normalized as in \cite{BZ}.  
Any $\tau\in \Alg(M_{n+1})$ has an $M_{n+1}$-filtration 
\[   \tau_n \subset  \ldots \subset \tau_0 =\tau  \]
where,  $\tau_i= (\Phi^+)^{i}(\Phi^-)^{i}(\tau)$, and 
\[ 
\tau_i/\tau_{i+1}= (\Phi^+)^{i}\Psi^+ \Psi^-(\Phi^-)^{i}(\tau).
\] 
 Observe that $\Psi^-(\Phi^-)^{i}(\tau)= \tau^{(i+1)}$,  is 
the $(i+1)$-th derivative, and the subquotients of the filtration, considered as $G_n$-modules, are 
\[  
 \tau_{i}/\tau_{i+1} \cong \mathrm{ind}_{R_{n-i}}^{G_{n}}(\nu^{1/2} \cdot \tau^{(i+1)} \boxtimes \psi_i), 
\]
where we have used notation from the previous section. In particular, $\tau_n$ is a multiple of the Gelfand-Graev representation. 
We derive some consequences of this filtration that we shall need later. 

\begin{lemma} \label{L:finite_gen} Let $\tau \in \Alg(M_{n+1})$ such that its derivatives are all finitely generated. 
When $\tau$ is considered a $G_n$-module, its Bernstein components are finitely generated. 
\end{lemma}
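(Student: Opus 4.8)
The plan is to run the Bernstein-Zelevinsky filtration of $\tau$ recalled above through a fixed Bernstein component of $G_n$. The point is that, although the subquotients of this filtration are very far from finitely generated (the bottom piece $\tau_n$ is a multiple of the Gelfand-Graev representation of $G_n$, hence $\tau$ itself is typically not finitely generated), a single Bernstein component of $G_n$ meets each subquotient only in a finitely generated submodule.

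Fix a Bernstein component $\mathfrak{s}$ of $G_n$ and let $(-)_{\mathfrak{s}}$ be the exact projection of $\Alg(G_n)$ onto the corresponding block. The terms $\tau_i$ of the $M_{n+1}$-filtration $\tau_n\subset\cdots\subset\tau_0=\tau$ are in particular $G_n$-submodules, so $(-)_{\mathfrak{s}}$ produces a finite filtration of $\tau_{\mathfrak{s}}$ with subquotients $(\tau_i/\tau_{i+1})_{\mathfrak{s}}$. Since a finite filtration with finitely generated subquotients has a finitely generated total space, it is enough to show that $\bigl(\mathrm{ind}_{R_{n-i}}^{G_n}(\nu^{1/2}\cdot\tau^{(i+1)}\boxtimes\psi_i)\bigr)_{\mathfrak{s}}$ is finitely generated for each $i$. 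By induction in stages, $\mathrm{ind}_{R_{n-i}}^{G_n}(\nu^{1/2}\tau^{(i+1)}\boxtimes\psi_i)\cong \mathrm{Ind}_{P_{n-i}}^{G_n}(\nu^{1/2}\tau^{(i+1)}\boxtimes\Gamma_i)$, where $\Gamma_i=\mathrm{ind}_{U_i}^{G_i}(\psi_i)$ is the Gelfand-Graev representation of $G_i$; decompose $\nu^{1/2}\tau^{(i+1)}\boxtimes\Gamma_i=\bigoplus_{(\mathfrak{a},\mathfrak{b})}(\nu^{1/2}\tau^{(i+1)})_{\mathfrak{a}}\boxtimes(\Gamma_i)_{\mathfrak{b}}$ along the Bernstein decomposition of $G_{n-i}\times G_i$.

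Parabolic induction commutes with arbitrary direct sums (as $P_{n-i}\backslash G_n$ is compact) and carries each block of $G_{n-i}\times G_i$ into a finite sum of blocks of $G_n$, since it preserves cuspidal support and a fixed finite multiset of inertial cuspidal data has only finitely many splittings compatible with the Levi $G_{n-i}\times G_i$. Hence $\bigl(\mathrm{Ind}_{P_{n-i}}^{G_n}(\nu^{1/2}\tau^{(i+1)}\boxtimes\Gamma_i)\bigr)_{\mathfrak{s}}$ is the sum, over finitely many pairs $(\mathfrak{a},\mathfrak{b})$, of the $\mathfrak{s}$-components of $\mathrm{Ind}_{P_{n-i}}^{G_n}\bigl((\nu^{1/2}\tau^{(i+1)})_{\mathfrak{a}}\boxtimes(\Gamma_i)_{\mathfrak{b}}\bigr)$. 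For each of these pairs: by hypothesis $\tau^{(i+1)}$, and therefore $\nu^{1/2}\tau^{(i+1)}$, is a finitely generated $G_{n-i}$-module, and a finitely generated smooth module has only finitely many nonzero Bernstein components, each again finitely generated (its finitely many generators lie in a finite sum of components), so $(\nu^{1/2}\tau^{(i+1)})_{\mathfrak{a}}$ is finitely generated; by \cite{CS}, each Bernstein component $(\Gamma_i)_{\mathfrak{b}}$ of the Gelfand-Graev representation is finitely generated (indeed finitely generated and projective). Thus $(\nu^{1/2}\tau^{(i+1)})_{\mathfrak{a}}\boxtimes(\Gamma_i)_{\mathfrak{b}}$ is a finitely generated $G_{n-i}\times G_i$-module, parabolic induction preserves finite generation, and so its $\mathfrak{s}$-component is finitely generated. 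Summing over the finitely many pairs and ascending the finite filtration of $\tau_{\mathfrak{s}}$ yields the lemma.

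The load-bearing step is the finiteness bookkeeping in the third paragraph: isolating, for a fixed block $\mathfrak{s}$ of $G_n$, the finitely many blocks of $\Gamma_i$ that contribute, and invoking \cite{CS} to know those contributions are finitely generated. The other ingredients — exactness of the block projection, induction in stages, and the standard facts that parabolic induction commutes with direct sums and preserves finite generation — are routine.
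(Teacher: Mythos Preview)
Your argument is correct and follows essentially the same route as the paper: write each subquotient of the Bernstein--Zelevinsky filtration via induction in stages as $\mathrm{Ind}_{P_{n-i}}^{G_n}(\nu^{1/2}\tau^{(i+1)}\boxtimes \ind_{U_i}^{G_i}\psi_i)$, use that $\tau^{(i+1)}$ is finitely generated and that Bernstein components of the Gelfand--Graev representation are finitely generated, and invoke preservation of finite generation under parabolic induction. The only cosmetic differences are that the paper cites \cite{BH} (rather than \cite{CS}) for the finite generation of the Gelfand--Graev components, and leaves implicit the block bookkeeping you spell out; your extra paragraph making explicit why only finitely many $(\mathfrak a,\mathfrak b)$ contribute to a fixed $\mathfrak s$ is a welcome clarification of what the paper's proof takes for granted.
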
 
\begin{proof} Recall that $P_{n-i}\supseteq R_{n-i}$ is the maximal parabolic subgroup of $G_n$ with the Levi factor $G_i\times G_{n-i}$. 
Using induction in stages, the $i$-th subquotient 
in the Bernstein-Zelevinsky filtration of $\Pi$ can be written as 
\[ 
\mathrm{Ind}_{P_{n-i}}^{G_{n}}(\nu^{1/2} \cdot \tau^{(i+1)} \boxtimes \ind_{U_i}^{G_i}(\psi_i)).
\] 
By the assumption $\tau^{(i+1)}$ is a finitely generated $G_{n-i}$-module and the Bernstein components of
the Gelfand-Graev representation $\ind_{U_i}^{G_i}(\psi_i)$ are finitely generated \cite{BH}. Lemma follows since 
parabolic induction sends finitely generated modules to finitely generated modules by 3.11 in \cite{BD}. 
\end{proof}

\begin{lemma} \label{L:key} 
Let $\pi_1 \in \Alg(G_{n+1})$ and $\pi_2$ an admissible representation of $G_{n}$. If $\pi_2$ is a quotient of $\pi_1$ then, for some $i,j\geq 0$. 
\[ 
\Hom_{G_{n-i}}(\nu^{1/2}\cdot  \pi_1^{(i+1)}, {}^{(i)}\pi_2 ) \neq 0 \text{ and } \Hom_{G_{n-j}}(\nu^{-1/2}\cdot  {}^{(j+1)}\pi_1, \pi_2^{(j)} ) \neq 0. 
\] 
\end{lemma}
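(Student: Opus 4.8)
The plan is to play the two Bernstein--Zelevinsky filtrations of $\pi_1$ against each other with the second adjointness formula. For the first inequality I would restrict $\pi_1$ to $M_{n+1}$ and use the filtration $\pi_1=\tau_0\supseteq\tau_1\supseteq\dots\supseteq\tau_n\supseteq\tau_{n+1}=0$ by $M_{n+1}$-submodules, which are in particular $G_n$-submodules since $G_n$ is the Levi of $M_{n+1}$; its successive quotients are, as $G_n$-modules,
\[ \tau_i/\tau_{i+1}\;\cong\;\mathrm{ind}_{R_{n-i}}^{G_n}\bigl(\nu^{1/2}\cdot\pi_1^{(i+1)}\boxtimes\psi_i\bigr). \]
Fix a nonzero $G_n$-homomorphism $\phi\colon\pi_1\twoheadrightarrow\pi_2$. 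Since $\phi|_{\tau_0}\neq0$, $\phi|_{\tau_{n+1}}=0$, and $\tau_{i'}\supseteq\tau_i$ for $i'\le i$, there is a unique $i$ with $\phi|_{\tau_i}\neq0$ and $\phi|_{\tau_{i+1}}=0$; then $\phi|_{\tau_i}$ factors through a nonzero $G_n$-map $\tau_i/\tau_{i+1}\to\pi_2$, so that $\Hom_{G_n}(\mathrm{ind}_{R_{n-i}}^{G_n}(\nu^{1/2}\cdot\pi_1^{(i+1)}\boxtimes\psi_i),\pi_2)\neq0$. Conjugating by the diagonal element $\mathrm{diag}(1,-1,\dots,(-1)^{i-1})$ of the $G_i$-block (which normalizes $R_{n-i}$, fixes the $G_{n-i}$-factor, and turns $\psi_i$ into $\bar\psi_i$) replaces $\psi_i$ by $\bar\psi_i$, and then Lemma~\ref{lem second adjointness bz der} identifies the group with $\Hom_{G_{n-i}}(\nu^{1/2}\cdot\pi_1^{(i+1)},{}^{(i)}\pi_2)$, giving the first assertion.

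For the second assertion I would run the identical argument with the transpose mirabolic $M_{n+1}^{\top}$ in place of $M_{n+1}$. Applying $\theta$ to the Bernstein--Zelevinsky filtration of $\theta_{n+1}(\pi_1)|_{M_{n+1}}$ produces a finite filtration of $\pi_1$ by $G_n$-submodules whose successive $G_n$-quotients are $\mathrm{ind}_{R_{n-j}^{\top}}^{G_n}(\nu^{-1/2}\cdot{}^{(j+1)}\pi_1\boxtimes\psi_j^{\top})$, using $\theta_n(\nu)=\nu^{-1}$ and the defining relation ${}^{(j+1)}\pi_1=\theta_{n-j}(\theta_{n+1}(\pi_1)^{(j+1)})$. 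The same ``lowest surviving step'' argument gives $j\ge0$ with $\Hom_{G_n}(\mathrm{ind}_{R_{n-j}^{\top}}^{G_n}(\nu^{-1/2}\cdot{}^{(j+1)}\pi_1\boxtimes\psi_j^{\top}),\pi_2)\neq0$, and the transpose of Lemma~\ref{lem second adjointness bz der} (second adjointness for right derivatives, after the same harmless inner twist matching $\psi_j^{\top}$ with $\bar\psi_j^{\top}$) rewrites this as $\Hom_{G_{n-j}}(\nu^{-1/2}\cdot{}^{(j+1)}\pi_1,\pi_2^{(j)})\neq0$.

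The routine parts are the normalization bookkeeping (the powers of $\nu$ and the exact additive characters in each $\mathrm{ind}$, which I have only sketched) and the elementary fact that an $M_{n+1}$- or $M_{n+1}^{\top}$-submodule of $\pi_1$ is a $G_n$-submodule. The one point needing genuine care, and the place where admissibility of $\pi_2$ enters, is that Lemma~\ref{lem second adjointness bz der} requires the target to be generated by its $K_r$-fixed vectors for some $r$, which $\pi_2$ need not be. I would handle this by writing $\pi_2\cong\bigoplus_{r\ge1}(\pi_2)_r$ with $(\pi_2)_r$ generated by $(\pi_2)_r^{K_r}$, as in the Remark following Lemma~\ref{lem_key_isomorphism}: a nonzero map from $\mathrm{ind}_{R_{n-i}}^{G_n}(\nu^{1/2}\cdot\pi_1^{(i+1)}\boxtimes\bar\psi_i)$ to $\pi_2$ has nonzero composite with the projection onto some summand $(\pi_2)_{r_0}$, Lemma~\ref{lem second adjointness bz der} applies to that summand, and since the left derivative functor is exact and commutes with direct sums, ${}^{(i)}(\pi_2)_{r_0}$ is a direct summand of ${}^{(i)}\pi_2$, forcing $\Hom_{G_{n-i}}(\nu^{1/2}\cdot\pi_1^{(i+1)},{}^{(i)}\pi_2)\neq0$; the transpose argument treats $\pi_2^{(j)}$ the same way. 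I expect this reduction to be the only delicate step, everything else being a direct application of the filtration together with the adjointness isomorphisms already established.
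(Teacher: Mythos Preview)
Your argument is correct and follows exactly the route the paper takes: restrict $\pi_1$ to $G_n$ via the Bernstein--Zelevinsky filtration on $M_{n+1}$ (resp.\ $M_{n+1}^{\top}$), locate the filtration step that survives under the quotient map to $\pi_2$, and apply the second adjointness formula of Lemma~\ref{lem second adjointness bz der} (and its transpose). Your explicit handling of the $\psi_i$ versus $\bar\psi_i$ twist and of the $K_r$-generation hypothesis on $\pi_2$ are details the paper leaves implicit in its one-line proof.
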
 
\begin{proof} In order to prove the first isomorphism, we restrict $\pi_1$ to $G_{n}$, by way of $M_{n+1}$, and use the second adjointness formula. 
For the second we restrict to $G_{n}$, by way of $M^{\top}_{n+1}$, i.e. we reverse the roles of left and right derivatives. 
\end{proof} 

\subsection{Essentially square integrable representations} 

\begin{theorem} \label{T:St_generic} 
Let $\Delta=[\nu^a\rho, \ldots , \nu^b \rho]$ be a segment of absolute length $n+1$, where $\rho$ is a cuspidal representation of $G_r$. 
 Let $\pi$ be an irreducible $G_n$-module. If $\pi$ is a quotient of 
$\St(\Delta)$ then $\pi$ is generic. 
\end{theorem}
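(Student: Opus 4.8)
The plan is to combine the derivative computation for $\mathrm{St}(\Delta)$ with the quotient criterion from Lemma \ref{L:key}, exploiting \emph{both} the left and right derivative statements simultaneously. Suppose $\pi$ is an irreducible quotient of $\mathrm{St}(\Delta)$ which is \emph{not} generic; we derive a contradiction. First I would apply Lemma \ref{L:key} with $\pi_1 = \mathrm{St}(\Delta)$ and $\pi_2 = \pi$. From the second displayed nonvanishing there is some $j \geq 0$ with $\Hom_{G_{n-j}}(\nu^{-1/2} \cdot {}^{(j+1)}\mathrm{St}(\Delta), \pi^{(j)}) \neq 0$, and from the first there is some $i \geq 0$ with $\Hom_{G_{n-i}}(\nu^{1/2} \cdot \mathrm{St}(\Delta)^{(i+1)}, {}^{(i)}\pi) \neq 0$. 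Now the key input is the proposition on derivatives of $\mathrm{St}(\Delta)$: the derivatives of $\mathrm{St}(\Delta)$ vanish except in degrees that are multiples of $r$, and when nonzero they are again of the form $\mathrm{St}(\Delta')$ for a truncation $\Delta'$ of $\Delta$ — hence irreducible and (once $\Delta'$ is a proper nonempty truncation) \emph{essentially square integrable but in general not generic}; only the full truncation down to relative length one is generic. So the left-hand arguments $\nu^{\pm 1/2}\mathrm{St}(\Delta')$ appearing in the Hom-spaces are irreducible.

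The heart of the argument is then a combinatorial squeeze on the cuspidal supports. Since $\pi$ is an irreducible subquotient of $\mathrm{St}(\Delta) = \mathrm{St}(\Delta_1)\times\cdots$ — here actually $\pi$ sits inside the standard module for $\Delta$ itself — Corollary \ref{C:degenerate} applies: if $\pi^{(j)}$ is generic then, examining the single segment $\Delta$, we learn $\Delta$ has relative length one or two, and if two then $\Delta^-$ must contribute to the cuspidal support of $\pi^{(j)}$. But a nonzero $\mathrm{Hom}_{G_{n-j}}(\nu^{-1/2}\mathrm{St}({}^{(j+1)}\Delta), \pi^{(j)})$ with the left term irreducible forces $\pi^{(j)}$ to contain $\nu^{-1/2}\mathrm{St}({}^{(j+1)}\Delta)$ as a submodule, which pins down a piece of its cuspidal support; likewise the left-derivative Hom-space pins down a piece of the cuspidal support of ${}^{(i)}\pi$ via ${}^-\Delta$. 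I would track the exponents: the segment $\Delta = [\nu^a\rho,\dots,\nu^b\rho]$ has a top end $\nu^b\rho$ and a bottom end $\nu^a\rho$; ${}^{(j+1)}\Delta$ has removed $j+1$ copies from the bottom, $\mathrm{St}({}^{(j)}\Delta)$ still carries the top $\nu^b\rho$; meanwhile $\pi$ being a quotient of $\mathrm{St}(\Delta)$ constrains which $\nu^c\rho$ can appear in $\pi$ and its derivatives. The combination of "the right derivative ${}^{(i)}\pi$ contains the \emph{bottom}-truncated piece" and "the left derivative $\pi^{(j)}$ contains the \emph{top}-truncated piece", together with irreducibility of $\pi$, squeezes $\pi$ itself to be $\mathrm{St}$ of a multisegment of pairwise-unlinked relative-length-one segments, i.e. generic — contradiction.

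The step I expect to be the main obstacle is making the cuspidal-support bookkeeping airtight: one must rule out the possibility that the two Hom-spaces are witnessed by "degenerate" truncations $\Delta' = {}^{(j+1)}\Delta$ that are still long (relative length $\geq 2$), in which case $\nu^{-1/2}\mathrm{St}(\Delta')$ is a genuinely non-generic essentially square integrable representation and its occurrence in $\pi^{(j)}$ does not by itself contradict $\pi$ being degenerate. Resolving this needs the extra leverage of having both left and right derivatives at once — the right derivative sees truncation from one end, the left from the other — so that if $\Delta$ had relative length $\geq 2$ the two pieces of cuspidal support one extracts cannot both fit inside the (bounded) cuspidal support of the single irreducible $\pi$ unless $\Delta$ already had relative length one, i.e. $\mathrm{St}(\Delta)$ was itself generic and then $\pi = \mathrm{St}(\Delta)$. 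I would also need to handle the degenerate cases $i=0$ or $j=0$ separately, where no truncation occurs and the Hom-space directly says $\pi$ contains $\nu^{\pm1/2}\mathrm{St}(\Delta)$, forcing $\pi \cong \nu^{\pm1/2}\mathrm{St}(\Delta)$, which is generic only when $\Delta$ has relative length one — so the same dichotomy recurs and one iterates or argues by induction on the absolute length $n+1$.
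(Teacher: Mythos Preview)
Your proposal has the right opening move (Lemma \ref{L:key} applied in both directions) but derails at two points, and as a result the contradiction is never reached. First, a factual error: you claim that $\St(\Delta')$ for a segment of relative length $\geq 2$ is ``essentially square integrable but in general not generic''. This is false --- every essentially square integrable representation of $G_m$ is generic, so $\St(\Delta')$ is generic for \emph{every} nonempty segment. Hence the irreducible modules $\nu^{1/2}\St(\Delta)^{(i+1)}$ and $\nu^{-1/2}{}^{(j+1)}\St(\Delta)$ are always generic; this is not an obstacle but precisely the leverage you need. Second, and more seriously, you try to apply Corollary \ref{C:degenerate} to $\pi$ regarded as sitting inside something built from $\Delta$. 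But $\Delta$ is a segment for $G_{n+1}$ while $\pi$ is an irreducible $G_n$-module; the restriction $\St(\Delta)|_{G_n}$ is not a product of $\langle\Delta_j\rangle$'s in any sense to which the corollary applies.

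The missing idea is to introduce the Zelevinsky multisegment $\mathfrak m=\{\Delta_1,\ldots,\Delta_k\}$ of $\pi$ itself, so that $\pi$ is the unique irreducible submodule of $\langle\Delta_1\rangle\times\cdots\times\langle\Delta_k\rangle$ and $\mathfrak m$ has total absolute length $n$. Degeneracy means some $\Delta_j$ has relative length $\geq 2$. Now Corollary \ref{C:degenerate} applies correctly: the generic submodule $\nu^{1/2}\St(\Delta)^{(i+1)}$ of ${}^{(i)}\pi$ forces every $\Delta_j$ to have relative length $\leq 2$, and a length-two segment must be of the form $[\nu^{c-1/2}\rho,\nu^{c+1/2}\rho]$ with $\nu^{c+1/2}\rho$ in the cuspidal support of $\nu^{1/2}\St({}^{(k)}\Delta)=\St([\nu^{a+k+1/2}\rho,\ldots,\nu^{b+1/2}\rho])$. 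Hence $\nu^{d+1/2}\rho$ for $d=c,\ldots,b$ all lie in the cuspidal support of $\pi$. The second identity in Lemma \ref{L:key}, applied to the \emph{same} length-two segment, likewise forces $\nu^{d-1/2}\rho$ for $d=a,\ldots,c$ into the cuspidal support of $\pi$. These $(b-c+1)+(c-a+1)=l+1$ cuspidals of $G_r$ are pairwise distinct, so the absolute length of $\mathfrak m$ is at least $(l+1)r=n+1+r>n$, the desired contradiction. Your ``squeeze'' intuition was correct, but the squeeze is on the Zelevinsky data of $\pi$, not on $\Delta$.
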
 
\begin{proof}  Let $l=b-a+1$, in particular, $n+1=lr.$ 
Assume that $\pi$ is degenerate. Let  $\mathfrak m=\{ \Delta_1, \ldots, \Delta_k\}$ be a multi-segment, 
from the Zelevinsky classification, such that $\pi$ is the unique submodule of $\langle \Delta_1 \rangle \times \ldots \times \langle \Delta_k \rangle$. Since $\pi$ is degenerate, 
by 9.10 in \cite{Ze}  one segment in $\mathfrak m$ has the relative length at least two. 
If $\pi$ is a quotient of $\St(\Delta)$ then, by Lemma \ref{L:key}, 
${}^{(i)}\pi$ contains  $\nu^{1/2}\cdot \St(\Delta)^{(i+1)}$ as a generic submodule for some $i$. 
 Now we can apply Corollary \ref{C:degenerate}  : the relative length of each segment in $\mathfrak m$ 
is one or two, and one of them is $[\nu^{c-1/2}\rho, \nu^{c+1/2}\rho]$ where $\nu^{c+1/2}\rho$ contributes to the cuspidal support of $\nu^{1/2}\cdot \St(\Delta)^{(i+1)}$. 
It follows that $\nu^{1/2}\cdot \St(\Delta)^{(i+1)}$ is a generalized Steinberg representation corresponding to a segment ending in $\nu^{b+1/2}$, and containing 
 $\nu^{c+1/2}\rho$.  Thus, for every  $d=c, \ldots, b$, $\nu^{d+1/2}\rho$ contributes to the cuspidal support of ${}^{(i)}\pi$ as well as to the cuspidal support of $\pi$. Similarly, if we use the second identity in Lemma \ref{L:key}, then for every $d= a, \ldots, c$, $\nu^{d-1/2}\rho$ contributes to the cuspidal support of $\pi$. We see that $\mathfrak m$ contains segments of total relative length $\geq l$ and absolute 
 length $(l+1)r=n+1+r >n$.  
This is a contradiction. 
\end{proof}

\section{Vanishing of Ext's} 

The purpose of this section is to prove: 

\begin{theorem} 
Let $\pi_1$ be an irreducible generic representation of $G_{n+1}$ and $\pi_2$ an irreducible generic representation of $G_{n}$. Then 
\[ 
\Ext_{G_n}^i( \pi_1,  \pi_2)=0 \text{ if } i >0 \text{ and } \cong \mathbb C \text{ if } i=0. 
\] 
\end{theorem}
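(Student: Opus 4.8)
The plan is to induct on $n$ and use the Bernstein--Zelevinsky filtration of $\pi_1$ restricted to $G_n$. Write $\pi_1$, as an $M_{n+1}$-module, with its filtration $0 = \tau_{n+1} \subset \tau_n \subset \cdots \subset \tau_0 = \pi_1$ whose graded pieces, viewed as $G_n$-modules, are
\[
\tau_i/\tau_{i+1} \cong \mathrm{ind}_{R_{n-i}}^{G_n}(\nu^{1/2}\cdot \pi_1^{(i+1)} \boxtimes \psi_i).
\]
Since $\pi_1$ is irreducible and generic, $\pi_1^{(n+1)} \cong \mathbb{C}$, so the bottom piece $\tau_n$ is (a multiple of) the Gelfand--Graev representation $\mathrm{ind}_{U_n}^{G_n}(\psi_n)$ of $G_n$, which is projective by \cite{CS}; more generally each $\tau_i/\tau_{i+1}$ with $i \geq 1$ is parabolically induced from $\nu^{1/2}\cdot\pi_1^{(i+1)} \boxtimes (\text{Gelfand--Graev of } G_i)$, hence projective as a $G_n$-module because parabolic induction preserves projectivity. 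Therefore $\Ext^j_{G_n}(\tau_i/\tau_{i+1}, \pi_2) = 0$ for all $j > 0$ and all $i \geq 1$. Running the long exact sequences for the successive extensions $0 \to \tau_{i+1} \to \tau_i \to \tau_i/\tau_{i+1} \to 0$ down from $i = n$, we get $\Ext^j_{G_n}(\tau_1, \pi_2) = 0$ for $j > 0$. So the only possibly nonzero higher Ext contribution comes from the top graded piece $\tau_0/\tau_1 = \Psi^+\Psi^-(\pi_1) = \pi_1^{(1)}$ viewed (inflated) as a $G_n$-module, i.e. from the $(n{+}1)\mapsto n$ ``restriction along the Levi'': the exact sequence $0 \to \tau_1 \to \pi_1 \to \pi_1^{(1)} \to 0$ gives, for $j > 0$,
\[
\Ext^j_{G_n}(\pi_1, \pi_2) \cong \Ext^j_{G_n}(\pi_1^{(1)}, \pi_2),
\]
with a six-term exact piece at $j = 0$ that I will need to analyze for the $i = 0$ assertion.

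The next step is to control $\Ext^j_{G_n}(\pi_1^{(1)}, \pi_2)$. The derivative $\pi_1^{(1)}$ is a finite-length $G_n$-module; its irreducible subquotients are, by the Zelevinsky theory, again built from multisegments, and one knows $\pi_1^{(1)}$ is generic-looking only at its ``top.'' The key point is a dimension/combinatorial reduction: I want to show that for the relevant subquotients $\sigma$ of $\pi_1^{(1)}$, either $\sigma$ is not ``compatible'' with $\pi_2$ in the sense that all $\Ext^j_{G_n}(\sigma,\pi_2)$ vanish (because their cuspidal supports, or Bernstein components, are disjoint — Ext between modules in different Bernstein blocks vanishes), or the contribution is forced by the Euler--Poincaré formula of Prasad. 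Here is where I would use the left/right derivative dichotomy stressed in the introduction: by Lemma \ref{L:key} and Theorem \ref{T:St_generic}-style arguments, a degenerate $\sigma$ occurring in $\pi_1^{(1)}$ that genuinely links to $\pi_2$ can be excluded, since one can choose, using either the left or the right derivative filtration, a filtration of $\pi_1|_{G_n}$ whose relevant top piece is a character/representation distinct from $\pi_2$ — exactly as in the $\GL_2$ illustration where the quotient $\mathbb{C}$ carries $\nu$ or $\nu^{-1}$ and can be arranged $\neq \pi_2$.

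Having reduced the higher Ext to a genuinely smaller case, I would invoke the inductive hypothesis: the surviving contributions to $\Ext^j_{G_n}(\pi_1^{(1)}, \pi_2)$ are governed by $\Ext$-groups for a pair $(G_m, G_{m-1})$ with $m < n+1$ (after a further derivative of $\pi_2$ or parabolic-descent step), where both members are again irreducible generic, so they vanish by induction. Finally, once all $\Ext^i_{G_n}(\pi_1,\pi_2) = 0$ for $i > 0$ are established, the equality $\EP(\pi_1,\pi_2) = \dim\Wh(\pi_1)\cdot\dim\Wh(\pi_2) = 1$ from \cite{Pr} forces $\dim\Hom_{G_n}(\pi_1,\pi_2) = 1$, giving the $i = 0$ statement; alternatively it follows from the multiplicity-one theorem of \cite{AGRS},\cite{AG},\cite{SZ} together with the fact that the Hom is nonzero for two generic representations. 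The base case $n = 0$ (or $n=1$, the $\GL_2 \to \GL_1$ case spelled out in the introduction) is immediate from projectivity of $C_c(F^\times)$.

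The main obstacle I expect is the middle step: pinning down precisely which irreducible subquotients of $\pi_1^{(1)}$ can contribute to $\Ext^{>0}_{G_n}(\pi_1^{(1)},\pi_2)$ and showing that the combinatorics of multisegments — using both left and right derivatives to get two independent constraints on cuspidal supports — rules out every ``bad'' (degenerate, linked) case, so that what remains is either Bernstein-block-disjoint from $\pi_2$ (hence $\Ext$-acyclic) or falls under the inductive hypothesis. Making the bookkeeping of segments, truncations ${}^-\Delta$ vs $\Delta^-$, and the shift by $\nu^{\pm 1/2}$ line up correctly is the delicate part; the homological algebra (long exact sequences, projectivity of parabolically induced Gelfand--Graev pieces) and the final passage from Euler characteristic to $\Hom$ are routine given the cited results.
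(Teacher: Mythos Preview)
Your argument has a genuine gap at the very first reduction. The claim that $\tau_i/\tau_{i+1}$ is projective for all $i\geq 1$ is false. Parabolic induction preserves projectivity, but the inducing module $\nu^{1/2}\pi_1^{(i+1)}\boxtimes \ind_{U_i}^{G_i}(\psi_i)$ is projective as a $G_{n-i}\times G_i$-module only if \emph{both} factors are projective. The Gelfand--Graev factor is projective, but $\pi_1^{(i+1)}$ is a finite-length $G_{n-i}$-module and is essentially never projective for $0<i<n$. Concretely, Lemma~\ref{L:sec_adj_ext} gives
\[
\Ext^j_{G_n}(\tau_i/\tau_{i+1},\pi_2)\cong \Ext^j_{G_{n-i}}(\nu^{1/2}\pi_1^{(i+1)},\, {}^{(i)}\pi_2),
\]
and there is no reason for the right side to vanish for $j>0$: two finite-length $G_{n-i}$-modules with overlapping cuspidal support will typically have nontrivial higher Ext. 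So every layer of the filtration, not just the top one, contributes potential higher Ext, and the problem does not localize to $\pi_1^{(1)}$.

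There is a second structural problem: even granting your reduction, the quantity $\Ext^j_{G_n}(\pi_1^{(1)},\pi_2)$ is an Ext between two $G_n$-modules, not a restriction problem from $G_m$ to $G_{m-1}$. Your proposed induction on $n$ therefore has no smaller instance to appeal to; the sentence ``the surviving contributions \ldots\ are governed by Ext-groups for a pair $(G_m,G_{m-1})$ with $m<n+1$'' does not correspond to any actual mechanism in the argument.

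The paper avoids both issues by inducting on a different quantity: fixing $\pi_2$, one inducts on $m(\pi_1)$, the number of cuspidals in the support of $\pi_1$ lying in the inertial classes appearing in $\pi_2$. The inductive step does not try to make the filtration pieces projective; instead it uses a \emph{transfer} isomorphism of mirabolic modules (Corollary~\ref{C:transfer}), based on the fact that $\rho|_M$ is independent of the cuspidal $\rho$, to replace one segment $\Delta$ of $\pi_1$ by a foreign cuspidal $\rho'$ times $\St({}^-\Delta)$, thereby lowering $m(\pi_1)$. What remains is to show $\Ext^j_{G_{n-i}}(\nu^{\pm 1/2}\St(\Delta)\times\pi^{(\bullet)},\,\pi_2^{(\bullet)})=0$, and this is where the left/right derivative dichotomy is actually used: one shows that if the Bernstein spectra fail to be disjoint for \emph{both} the $\nu^{1/2}$-shifted right-derivative picture and the $\nu^{-1/2}$-shifted left-derivative picture, then $\mathfrak m_2$ is forced to contain a pair of linked segments, contradicting genericity of $\pi_2$. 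Your outline gestures at this last step but never sets up a situation in which it can be invoked.
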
 

Let us explain the strategy of the proof. Fix $\pi_2$, and assume that 
$\pi_2$ is a subquotient of $\rho_1\times \ldots \times \rho_k$ where $\rho_i$ are cuspidal representations. 
Let $m(\pi_1)$ be the integer that counts the number of cuspidal representations $\rho$ in the support of $\pi_1$ such that $\rho$ is an unramified 
twist of a $\rho_i$, for some $1\leq i \leq k$. The proof is by induction on $m(\pi_1)$. The base 
case $m(\pi_1)=0$ is easy. It is deduced from the Bernstein-Zelevinsky filtration of $\pi_1$ where the bottom piece is the Gelfand-Graev representation of $G_n$. 
Assume now that $\pi_1=\St(\mathfrak m_1)$ and $\pi_2=\St(\mathfrak m_2)$ for a pair of generic multisegments $\mathfrak{m}_1$ and $\mathfrak{m}_2$ i.e. no two segments in $\mathfrak{m}_i$ are linked. 
 Let $\Delta=[\nu^a\rho, \ldots , \nu^b\rho]$ be a 
segment in $\mathfrak m_1$ such that $\rho$ contributes to $m(\pi_2)$.  Assume that $\Delta$ is also a shortest such segment. 
Write $\pi_1=\St(\Delta) \times \pi$ where $\pi=\St(\mathfrak m)$ and 
$\mathfrak m=\mathfrak m_1 \setminus \Delta$. 
Let $r$ be the integer such that  $\rho \in \Alg(G_r)$. Let $\rho'\in \Alg(G_r)$ be another cuspidal representation such that no unramified twist of $\rho'$ appears in 
the cuspidal supports of $\pi_1$ and $\pi_2$. Now both $\rho'\times \St({}^{-}\Delta) \times \pi$ and $\rho'\times \St(\Delta^-) \times \pi \in \Alg(G_{n+1})$ are irreducible 
 and satisfy the induction assumption.  We shall use this information to prove the theorem for $\pi_1$.

\subsection{Transfer}

Let $l=s+r$. Recall that $P_s$ is the maximal parabolic of $G_l$ with the Levi $G_s\times G_{r}$. 
Starting with $\sigma \in \Alg(G_s)$ and $\tau\in \Alg(M_r)$ one can manufacture two representations of $M_l$. 
The first one is obtained by the (normalized) induction from $P_s\cap M_l$ and, abusing notation, denoted by $\sigma\times \tau$. 
The second is obtained by the normalized induction from 
$P_s ^{\top}\cap M_l$ but only after $\sigma$ is multiplied by $\nu^{-1/2}$, see \cite{BZ} page 457, where the definition uses a different subgroup, but conjugated in $M_l$.  
This representation is denoted by $\tau\times  \sigma$.  

Our interest in these representations comes from the following,   4.13 Proposition in \cite{BZ}. 

 \begin{proposition} \label{P:BZ} 
 Let $\rho\in \Alg(G_r)$, $\sigma \in \Alg(G_s)$ and $\tau\in \Alg(M_r)$. Let $\rho\vert_M$ and $\sigma\vert_M$ denote restrictions to $M_r$ and $M_s$, respectively. 
 \begin{enumerate} 
\item  There exists an exact sequence in $\Alg(M_l)$ 
 \[ 
 0\rightarrow (\rho\vert_{M}) \times  \sigma \rightarrow \rho \times \sigma   \rightarrow \rho \times (\sigma \vert_{M}) \rightarrow 0
 \] 
 \item If $\Omega$ is any of the four functors $\Phi^{\pm}$ and $\Psi^{\pm}$, then 
\[ 
\Omega(\sigma\times \tau)=\sigma\times \Omega(\tau). 
\] 
\item $\Psi^-(\tau\times \sigma)= \Psi^-(\tau) \times \sigma $, and 
there exists an exact sequence in $\Alg(M_{l-1})$ 
 \[ 
 0\rightarrow \Phi^-(\tau)\times \sigma \rightarrow \Phi^-(\tau\times \sigma) \rightarrow \Psi^-(\tau) \times (\sigma \vert_{M}) \rightarrow 0
 \] 
\end{enumerate} 
 \end{proposition}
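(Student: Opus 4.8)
The plan is to deduce all three parts from the Bernstein--Zelevinsky geometric lemma \cite{BZ}: in each part the claim reduces to describing a small set of $M_l$-orbits on a flag variety, applying Mackey theory either to the restriction of a parabolically induced representation (part (1)) or to the twisted Jacquet functors entering the definitions of the functors $\Phi^{\pm},\Psi^{\pm}$ (parts (2), (3)), and then tracking the normalizing modular characters.

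For (1), I would write $\rho\times\sigma=\mathrm{Ind}_{Q}^{G_l}(\rho\boxtimes\sigma)$ with $Q\subseteq G_l$ the standard maximal parabolic whose Levi factor is $G_r\times G_s$, and restrict to $M_l$. Since $M_l$ is the stabilizer in $G_l$ of the coordinate functional $e_l^{*}$, the double coset space $Q\backslash G_l/M_l$ is the set of $Q$-orbits on nonzero linear functionals on $F^l$, of which there are exactly two, one closed and one open. Mackey theory then gives a two-step filtration of $(\rho\times\sigma)|_{M_l}$, the closed orbit producing a subrepresentation and the open orbit a quotient; computing the two layers and inserting the modular characters of $Q$ and of the relevant orbit stabilizers, one identifies the subrepresentation with $(\rho|_{M})\times\sigma$ and the quotient with $\rho\times(\sigma|_{M})$, the $\nu^{\pm1/2}$-twists built into the two products being exactly what absorbs these characters.

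Parts (2) and (3) are mirror images. For (2) the point is that $E_{l-1}$ lies inside the unipotent radical of $P_s\cap M_l$, transversally to the $G_s$-factor carrying $\sigma$: concretely, $E_{l-1}$ factors as $(E_{l-1}\cap N)\cdot E_{r-1}$, where $N$ is the unipotent radical of $P_s$ and $E_{r-1}$ sits in the $M_r$-block, so both factors are internal to $P_s\cap M_l$. Hence for $\Phi^{-}$ and $\Psi^{-}$ the relevant geometry is transversal --- a single orbit, no filtration --- and taking coinvariants of $\sigma\times\tau$ simply passes through $\sigma$ and takes the corresponding coinvariants of $\tau$; the $\Phi^{+}$ and $\Psi^{+}$ cases are the analogous compatibilities of induction and inflation, and in all cases matching normalizations yields $\Omega(\sigma\times\tau)=\sigma\times\Omega(\tau)$. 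For (3), the representation $\tau\times\sigma$ is induced from $P_s^{\top}\cap M_l$, and now the part of $E_{l-1}$ lying in the opposite unipotent direction is transversal to the inducing subgroup. For $\Psi^{-}$, which uses plain $E_{l-1}$-coinvariants, the relevant orbit is still unique and one gets $\Psi^-(\tau\times\sigma)=\Psi^-(\tau)\times\sigma$ outright. For $\Phi^{-}$, which uses $(E_{l-1},\psi)$-coinvariants, the character $\psi$ breaks this transversality: it separates two orbits, giving a two-step filtration whose sublayer, from the orbit on which $\psi$ restricts trivially to the transversal part, is $\Phi^{-}(\tau)\times\sigma$, and whose quotient, where the non-triviality of $\psi$ forces a further degeneration involving the mirabolic structure of the $\sigma$-factor, is $\Psi^{-}(\tau)\times(\sigma|_{M})$ --- exactly the short exact sequence in the statement.

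The step I expect to be the main obstacle is the normalization bookkeeping rather than the orbit analysis: one must carry the modular characters of $Q$, $P_s\cap M_l$, $P_s^{\top}\cap M_l$ and of the relevant unipotent subgroups through every induction-in-stages step so that the $\nu^{\pm1/2}$-twists land precisely where the definitions of $\sigma\times\tau$ and $\tau\times\sigma$ put them, and in part (3) one must pin down on which of the two orbits the $\psi$-twisted coinvariants survive and check that no further subquotient intervenes.
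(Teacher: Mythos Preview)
The paper does not prove this proposition at all: it is stated as a direct citation of Proposition 4.13 in \cite{BZ}, with no argument given. Your sketch is therefore not competing with any proof in the paper; rather, you have outlined essentially the argument that Bernstein and Zelevinsky themselves give in \cite{BZ}, namely a Mackey-theoretic analysis of the relevant double cosets combined with careful tracking of the modular characters. Your identification of the orbit structure in each part is correct, and your assessment that the normalization bookkeeping is the main technical burden matches what one finds when working through \cite{BZ} in detail. Since the paper treats this as an imported black box, there is nothing further to compare.
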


 \begin{proposition} \label{P:transfer} 
 Let $\Delta=[\nu^a\rho, \ldots, \nu^b\rho]$ be a segment where $\rho\in \Alg(G_r)$. 
 Let $\tau_r=(\Phi^+)^{r-1}(1) \in \Alg(M_r)$, the Gelfand-Graev module. 
 Then $\St(\Delta)\vert_{M}$ is isomorphic to $\tau_r \times \St({}^-\Delta)$.  
 \end{proposition}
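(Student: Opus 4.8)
The plan is as follows. I would first dispose of the trivial case $l=b-a+1=1$: there $\St(\Delta)=\nu^{a}\rho$ is cuspidal, so $\St(\Delta)\vert_{M}\cong\tau_{r}$ by the classical theorem of Bernstein and Zelevinsky that all cuspidal representations of $G_{r}$ have the same restriction to $M_{r}$, namely the Gelfand--Graev module, while $\tau_{r}\times\St({}^{-}\Delta)=\tau_{r}\times\St(\emptyset)=\tau_{r}$. So assume $l\ge 2$. Since $\St({}^{-}\Delta)$ is a quotient of $\nu^{a+1}\rho\times\dots\times\nu^{b}\rho$ and parabolic induction is exact, $\nu^{a}\rho\times\St({}^{-}\Delta)$ is a quotient of the standard module $\nu^{a}\rho\times\dots\times\nu^{b}\rho$, whose cosocle is $\St(\Delta)$; hence $\St(\Delta)$ is a quotient of $\nu^{a}\rho\times\St({}^{-}\Delta)$. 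I would fix a surjection $q\colon\nu^{a}\rho\times\St({}^{-}\Delta)\twoheadrightarrow\St(\Delta)$, restrict it to $M_{n+1}$, and apply Proposition~\ref{P:BZ}(1) to $\nu^{a}\rho\times\St({}^{-}\Delta)$ with $G_{r}$-factor $\nu^{a}\rho$ and $G_{(l-1)r}$-factor $\St({}^{-}\Delta)$; using $\nu^{a}\rho\vert_{M}\cong\tau_{r}$ this produces a short exact sequence of $M_{n+1}$-modules
\[
0\to A\to\bigl(\nu^{a}\rho\times\St({}^{-}\Delta)\bigr)\vert_{M}\to B\to 0,\qquad A:=\tau_{r}\times\St({}^{-}\Delta),\quad B:=\nu^{a}\rho\times\bigl(\St({}^{-}\Delta)\vert_{M}\bigr),
\]
and composing $A\hookrightarrow(\nu^{a}\rho\times\St({}^{-}\Delta))\vert_{M}$ with $q\vert_{M}$ gives a morphism $f\colon A\to\St(\Delta)\vert_{M}$. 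The whole point will be to show that $f$ is an isomorphism.

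To prove $f$ surjective I would show that $C:=\mathrm{coker}(f)$ vanishes. Since $q\vert_{M}$ is onto, $C$ is a quotient of $\St(\Delta)\vert_{M}$; and since $C=(\nu^{a}\rho\times\St({}^{-}\Delta))\vert_{M}/(A+\ker q\vert_{M})$ is a quotient of $(\nu^{a}\rho\times\St({}^{-}\Delta))\vert_{M}/A=B$, it is also a quotient of $B$. Applying the exact functors $\Psi^{-}(\Phi^{-})^{i}$: from the first description, $\Psi^{-}(\Phi^{-})^{i}(C)$ is a quotient of $\Psi^{-}(\Phi^{-})^{i}(\St(\Delta)\vert_{M})=\St(\Delta)^{(i+1)}$, which by the derivative formula for essentially square integrable representations is $0$ or $\St({}^{(j)}\Delta)$ with $j\ge1$, the latter having cuspidal support $\{\nu^{a+j}\rho,\dots,\nu^{b}\rho\}$, which omits $\nu^{a}\rho$; from the second description together with Proposition~\ref{P:BZ}(2), $\Psi^{-}(\Phi^{-})^{i}(C)$ is a quotient of $\Psi^{-}(\Phi^{-})^{i}(B)=\nu^{a}\rho\times\St({}^{-}\Delta)^{(i+1)}$, every irreducible constituent of which has $\nu^{a}\rho$ in its cuspidal support. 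These are incompatible, so $\Psi^{-}(\Phi^{-})^{i}(C)=0$ for all $i$, and hence $C=0$, since the Bernstein--Zelevinsky filtration of $C$ is assembled from the $\Psi^{-}(\Phi^{-})^{i}(C)$. Note this argument also covers $r=1$, where $\St(\Delta)^{(1)}\ne0$: the point is only that $\St({}^{(j)}\Delta)$ never has $\nu^{a}\rho$ in its cuspidal support.

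To prove $f$ injective I would compute the left derivatives of $A=\tau_{r}\times\St({}^{-}\Delta)$. Using Proposition~\ref{P:BZ}(2),(3) together with the standard relations $\Phi^{-}\Phi^{+}\cong\mathrm{id}$, $\Psi^{-}\Psi^{+}\cong\mathrm{id}$, $\Phi^{-}\Psi^{+}=0$, $\Psi^{-}\Phi^{+}=0$ of \cite{BZ} — which give $\Phi^{-}\tau_{m}\cong\tau_{m-1}$ and $\Psi^{-}\tau_{m}=0$ for $m\ge2$ — and then the derivative formula applied to $\St({}^{-}\Delta)$, one finds $(\Phi^{-})^{r}(A)\cong\St({}^{-}\Delta)\vert_{M}$, and consequently $\Psi^{-}(\Phi^{-})^{i}(A)\cong\St(\Delta)^{(i+1)}$ for every $i\ge0$. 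Since $f$ is surjective and $\Psi^{-}(\Phi^{-})^{i}$ is exact, $\Psi^{-}(\Phi^{-})^{i}(f)$ is a surjection between two modules that are either both $0$ or both isomorphic to the irreducible $\St({}^{(j)}\Delta)$, hence an isomorphism; so $\Psi^{-}(\Phi^{-})^{i}(\ker f)=0$ for all $i$, and therefore $\ker f=0$.

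The hard part will be the surjectivity step: one must identify $\mathrm{coker}(f)$ precisely as a quotient of the ``transfer'' term $B$ so that the cuspidal-support obstruction becomes available after passing to derivatives. The secondary nuisance is the derivative computation for $A=\tau_{r}\times\St({}^{-}\Delta)$ in the injectivity step, where one has to keep track of the $\nu^{\pm1/2}$-twists built into the two different ``$\times$'' constructions and use $\Psi^{-}\tau_{m}=0$ for $m\ge2$ at the right moments.
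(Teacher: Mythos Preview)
Your proof is correct and follows essentially the same route as the paper's. Both arguments set up the exact sequence from Proposition~\ref{P:BZ}(1) for $\nu^a\rho\times\St({}^-\Delta)$, restrict the quotient map onto $\St(\Delta)$ to the submodule $A=\tau_r\times\St({}^-\Delta)$, and then show this restricted map is an isomorphism by comparing Bernstein--Zelevinsky derivatives and using the cuspidal-support discrepancy (presence vs.\ absence of $\nu^a\rho$) between the pieces of $B=\nu^a\rho\times(\St({}^-\Delta)\vert_M)$ and those of $\St(\Delta)\vert_M$.

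The only difference is one of presentation. The paper compresses the final step: it notes that the BZ-filtration subquotients of $B$ are irreducible mirabolic modules (since $\nu^a\rho$ and ${}^{(k)}\Delta$ are unlinked for $k\ge 2$) and disjoint from those of $\St(\Delta)\vert_M$, and concludes directly. You instead split into surjectivity and injectivity: for surjectivity you identify $\mathrm{coker}(f)$ as a common quotient of $B$ and $\St(\Delta)\vert_M$ and kill it by cuspidal support; for injectivity you explicitly compute $\Psi^-(\Phi^-)^i(A)\cong\St(\Delta)^{(i+1)}$ via Proposition~\ref{P:BZ}(3) and the relations $\Phi^-\tau_m\cong\tau_{m-1}$, $\Psi^-\tau_m=0$. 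This explicit derivative computation for $A$ is exactly what makes the paper's ``hence'' rigorous, so your version is a fuller write-up of the same argument rather than a different one.
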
 
 \begin{proof}  
 Recall that $\rho\vert_{M}\cong \tau_r$ (this is true for every cuspidal representation). 
 Note that  $\St(\Delta)$ is a quotient of $\nu^a \rho \times \St({}^-\Delta)$. By Proposition \ref{P:BZ} (1),  we have an exact sequence of mirabolic subgroup modules 
  \[ 
 0\rightarrow \tau_r \times \St({}^-\Delta) \rightarrow \nu^a \rho \times \St({}^-\Delta)    \rightarrow \nu^a\rho \times (\St({}^-\Delta) \vert_{M}) \rightarrow 0
 \] 
 By Proposition \ref{P:BZ} (2), any derivative of  the quotient in the above sequence is equal to $\nu^a\rho \times \St({}^{(k)}\Delta)$  with $k>1$. 
   Since $\nu^a\rho$ and ${}^{(k)}\Delta$  are not linked, the corresponding subquotients in 
 the Bernstein-Zelevinsky filtration are irreducible as mirabolic subgroup modules. Observe that they are non-isomorphic to the subquotients of the Bernstein-Zelevinsky filtration of $\St(\Delta)$. 
 Hence the projection from  $\nu^a \rho \times \St({}^-\Delta)$  onto $\St(\Delta)$, restricted to $\tau_r \times \St({}^-\Delta)$ gives the desired 
 isomorphism. 
 \end{proof} 
 
 Now we arrive to a key result: 
 
 \begin{corollary} \label{C:transfer} Let $\rho, \rho'\in \Alg(G_r)$ be two irreducible cuspidal representations. 
 Let $\Delta=[\nu^a\rho, \ldots, \nu^b\rho]$, and $\pi\in \Alg(G_s)$.  Then we have an isomorphism of mirabolic modules 
 \[ 
 \St(\Delta)\vert_{M} \times \pi \cong \rho'\vert_{M} \times (\St({}^-\Delta)\times \pi). 
 \] 
 \end{corollary}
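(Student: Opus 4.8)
The plan is to deduce this from Proposition~\ref{P:transfer}, the classical Bernstein--Zelevinsky identity $\rho'\vert_M\cong\tau_r$ (valid for \emph{any} irreducible cuspidal $\rho'\in\Alg(G_r)$, and recalled in the proof of Proposition~\ref{P:transfer}), and an associativity property of the product functors.

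First I would invoke Proposition~\ref{P:transfer} to write $\St(\Delta)\vert_M\cong\tau_r\times\St({}^-\Delta)$ as $M$-modules, where $\tau_r=(\Phi^+)^{r-1}(1)$. For a fixed $\pi\in\Alg(G_s)$ the assignment $\tau\mapsto\tau\times\pi$ is a functor (it is a normalized induction from a subgroup of the form $P^\top\cap M$), so applying it to this isomorphism yields
\[
\St(\Delta)\vert_M\times\pi\;\cong\;\bigl(\tau_r\times\St({}^-\Delta)\bigr)\times\pi .
\]
Next I would establish the associativity isomorphism
\[
\bigl(\tau_r\times\St({}^-\Delta)\bigr)\times\pi\;\cong\;\tau_r\times\bigl(\St({}^-\Delta)\times\pi\bigr),
\]
where on the right $\St({}^-\Delta)\times\pi$ denotes the ordinary parabolic-induction product of $\GL$-modules. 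Unwinding the definitions of \cite{BZ}, both sides are obtained from $\tau_r$, $\St({}^-\Delta)$ and $\pi$ by iterated normalized induction along a nested chain of transposed parabolic subgroups intersected with the mirabolic; the identity then follows from transitivity of normalized parabolic induction (induction in stages), using that $\nu$ restricts multiplicatively to block-diagonal Levi subgroups, so that the $\nu^{-1/2}$ twist attached to the $\GL$-module argument of the mirabolic product $\tau\times\sigma$ is the same whether applied to $\St({}^-\Delta)$ and $\pi$ separately or to their product. Finally, substituting $\tau_r\cong\rho'\vert_M$ gives
\[
\St(\Delta)\vert_M\times\pi\;\cong\;\rho'\vert_M\times\bigl(\St({}^-\Delta)\times\pi\bigr),
\]
which is the assertion.

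I expect the associativity step to be the only point requiring care. There one is simultaneously handling two different ``$\times$'' operations --- the mirabolic product, defined through the transpose parabolic $P^\top\cap M$ together with a $\nu^{-1/2}$ twist of the $\GL$-factor, and the ordinary $\GL$-module product --- and one must verify that the normalizing modular characters and the half-integral twists built into the construction do not accumulate incorrectly under iteration, and that the ordering conventions for the factors match. This is a routine but somewhat delicate bookkeeping exercise within the formalism of Section~4 of \cite{BZ}; everything else is the direct chain of isomorphisms displayed above.
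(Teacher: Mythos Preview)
Your proposal is correct and follows essentially the same route as the paper's proof: invoke Proposition~\ref{P:transfer}, use associativity of the mirabolic product via induction in stages, and then substitute $\tau_r\cong\rho'\vert_M$. Your discussion of the normalization bookkeeping in the associativity step is in fact more careful than the paper's one-line appeal to ``induction in stages in two different orders.''
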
 
 \begin{proof} By Proposition \ref{P:transfer}, we can substitute $\St(\Delta)\vert_{M}= \tau_r \times \St({}^-\Delta)$. Furthermore, we have a natural isomorphism 
 \[ 
(\tau_r \times \St({}^-\Delta)) \times \pi \cong \tau_r \times (\St({}^-\Delta)) \times \pi) 
\] 
given by the induction in stages in two different orders. 
We finish by observing that $\tau_r= \rho'\vert_{M}$.  
 \end{proof} 

 Now we continue with the proof of vanishing for $\pi_1=\St(\Delta)\times \pi$, notation as in the start of the section. By  Proposition \ref{P:BZ} (1) there is an exact sequence 
 in $\Alg(M_{n+1})$ 
 \[ 
 0\rightarrow (\St(\Delta)\vert_{M}) \times  \pi  \rightarrow \St(\Delta)\times \pi   \rightarrow \St(\Delta)\times (\pi \vert_{M}) \rightarrow 0. 
 \] 
 Likewise, there is an exact sequence  in $\Alg(M_{n+1})$ 
 \[ 
 0\rightarrow \rho'\vert_{M} \times  (\St({}^-\Delta)\times \pi)  \rightarrow \rho'\times (\St({}^-\Delta)\times \pi)   \rightarrow \rho'\times( \St({}^-\Delta)\times \pi )\vert_{M} \rightarrow 0. 
 \] 
Note that the submodules in the two sequences are isomorphic by Corollary \ref{C:transfer}. Furthermore, by the choice of $\rho'$, 
\[ 
\Ext_{G_n}^i(  \rho'\times( \St({}^-\Delta)\times \pi )\vert_{M} , \pi_2)=0 \text{ if } i\geq 0. 
\] 
Now  we can apply the induction assumption to $\rho'\times \St({}^-\Delta)\times \pi$ and conclude that 
\[ 
\Ext_{G_n}^i((\St(\Delta)\vert_{M}) \times  \pi , \pi_2)=0 \text{ if } i >0 \text{ and } \cong \mathbb C \text{ if } i=0. 
\] 
Hence, in order to establish the conjecture for the pair $(\pi_1,\pi_2)$, it suffices to show that 
\[ 
\Ext_{G_n}^i( \St(\Delta)\times (\pi \vert_{M}), \pi_2)=0 \text{ if } i \geq 0, 
\] 
and to do this it suffices to show vanishing for  each subquotient in the Bernstein-Zelevinsky filtration of $\St(\Delta)\times (\pi \vert_{M})$. 
By Proposition \ref{P:BZ} part (2), the derivatives of $\St(\Delta)\times (\pi \vert_{M})$ are computed on the second factor. Thus, combining with 
the second adjointness formula, it suffices to show that 
\begin{itemize} 
\item $\Ext_{G_n}^j( \nu^{1/2}\St(\Delta)\times \pi^{(i+1)}, {}^{(i)}\pi_2)=0$ for $i,j\geq 0$. 
\end{itemize} 
Alternatively, by reversing the roles of left and right derivatives,  
it suffices to show that 
\begin{itemize} 
\item $\Ext_{G_n}^j( \nu^{-1/2}\St(\Delta)\times {}^{(i+1)}\pi , \pi^{(i)}_2)=0$ for $i,j\geq 0$. 
\end{itemize}

\smallskip

Hence it suffices to show that the Bernstein center spectra of  $\nu^{1/2}( \St(\Delta) \times \pi^{(i+1)})$ and of ${}^{(i)}\pi_2$ are different for all $i$, or, 
they are different for $\nu^{-1/2}( \St(\Delta) \times {}^{(i+1)} \pi)$ and ${}\pi_2^{(i)}$ for all $i$. The strategy is to show that, if both statements fail, then
$\mathfrak m_2$ contains linked segments. 

\subsection{Combinatorics} 
 Let $\mathfrak m=\{ \Delta_1, \ldots , \Delta_k\}$ be a multisegment. Then $\St(\mathfrak m)$ is generic but reducible if some segments are linked. 
 However, if $\Delta_i$ and $\Delta_j$ are linked, then they can be replaced by $\Delta_i \cap \Delta_j$ and $\Delta_i\cup \Delta_j$. This process (called 
 recombination henceforth) eventually leads to a generic segment such that the corresponding irreducible generic representation is the unique generic 
 subquotient in $\St(\mathfrak m)$. Important observation is that the points in the spectrum of the Bernstein center are uniquely represented by generic multisegments! 
  The following is a key lemma. 

\begin{lemma} 
Let $\mathfrak m$  be a generic multisegment and $\mathfrak m'$  a multisegment obtained by truncating $\mathfrak m$ from the right. 
 Then the generic segment 
corresponding to $\mathfrak m'$ by recombination is also obtained from $\mathfrak m$ by truncating from the right. 
\end{lemma}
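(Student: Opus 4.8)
The plan is to reduce the statement to a single recombination move and induct on the number of moves. Recall that recombination takes a multisegment, selects two linked segments $\Delta,\Delta'$, and replaces them by $\Delta\cup\Delta'$ and $\Delta\cap\Delta'$; iterating until no two members are linked produces the generic multisegment attached to the unique generic subquotient of $\St(\mathfrak m')$, and by \cite{Ze} this terminal multisegment is independent of the order of moves and is reached in finitely many steps. Since the property ``is a right-truncation of $\mathfrak m$'' is trivially preserved by doing nothing, it suffices to prove the following: if $\mathfrak n$ is a right-truncation of the generic multisegment $\mathfrak m$ and $\mathfrak n''$ is obtained from $\mathfrak n$ by a single recombination move, then $\mathfrak n''$ is again a right-truncation of $\mathfrak m$.

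To set this up, I would record a right-truncation of $\mathfrak m=\{\Delta_1,\dots,\Delta_k\}$ as the choice of integers $c_p\ge 0$ giving $\mathfrak n=\{\Delta_1^{(c_1)},\dots,\Delta_k^{(c_k)}\}$ (a segment truncated past its length is simply deleted). Suppose two members of $\mathfrak n$ are linked; only segments supported on a common cuspidal line can be linked, so write them as $\Delta=\Delta_p^{(c_p)}$ and $\Delta'=\Delta_q^{(c_q)}$ with $\Delta_p=[\nu^{\alpha_p}\rho,\dots,\nu^{\beta_p}\rho]$, $\Delta_q=[\nu^{\alpha_q}\rho,\dots,\nu^{\beta_q}\rho]$ and $p\neq q$ (if $p=q$ the two truncations share a left endpoint and one contains the other, so they are not linked). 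Right-truncation does not move left endpoints, so $\Delta$ runs from $\nu^{\alpha_p}\rho$ to some $\nu^b\rho$ with $b\le\beta_p$, and $\Delta'$ runs from $\nu^{\alpha_q}\rho$ to some $\nu^{b'}\rho$ with $b'\le\beta_q$; after swapping names assume $\alpha_q\le\alpha_p$.

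The heart of the argument is to pin down $\Delta_p$ and $\Delta_q$ using genericity of $\mathfrak m$: they are not linked, hence nested or separated. If they were separated with $\alpha_q\le\alpha_p$ then $\beta_q<\alpha_p-1$, so $b'\le\beta_q<\alpha_p-1$ and $\Delta\cup\Delta'$ fails to be a segment, contradicting that $\Delta,\Delta'$ are linked; and $\alpha_q=\alpha_p$ is impossible since then $\Delta,\Delta'$ share a left endpoint. Hence $\alpha_q<\alpha_p$ and $\Delta_p\subsetneq\Delta_q$, so $\beta_p\le\beta_q$, while linkedness of $\Delta,\Delta'$ forces $b'<b$ and $\alpha_p\le b'+1$. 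Then $\Delta\cup\Delta'=[\nu^{\alpha_q}\rho,\dots,\nu^{b}\rho]$ has the left endpoint of $\Delta_q$ and right end $b\le\beta_p\le\beta_q$, so it is a right-truncation of $\Delta_q$; and $\Delta\cap\Delta'=[\nu^{\alpha_p}\rho,\dots,\nu^{b'}\rho]$ (possibly empty) has the left endpoint of $\Delta_p$ and right end $b'<b\le\beta_p$, so it is a right-truncation of $\Delta_p$. Thus $\mathfrak n''$ is the right-truncation of $\mathfrak m$ agreeing with $\mathfrak n$ away from $p,q$ and truncating $\Delta_p$ down to $[\nu^{\alpha_p}\rho,\dots,\nu^{b'}\rho]$ and $\Delta_q$ down to $[\nu^{\alpha_q}\rho,\dots,\nu^{b}\rho]$, which completes the induction. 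The analogous statement for truncation from the left follows by applying the automorphism $\theta$.

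The main obstacle is precisely the case analysis of the third paragraph: one must deploy genericity of $\mathfrak m$ in the exact shape ``not linked $\Rightarrow$ nested or separated'' and verify that recombining two right-truncations of non-linked original segments never yields a segment longer than one of the originals. This is where the hypothesis is essential — without it the union can overshoot (for linked $\Delta_p=[\nu^0\rho,\nu^3\rho]$, $\Delta_q=[\nu^1\rho,\nu^4\rho]$, truncating $\Delta_p$ to $[\nu^0\rho,\nu^1\rho]$ and recombining with $\Delta_q$ gives $[\nu^0\rho,\nu^4\rho]$, which is not a right-truncation of anything in $\mathfrak m$). Everything else — termination and confluence of recombination, and the identification of the generic subquotient of $\St(\mathfrak m')$ with $\St$ of the recombined multisegment — is standard from \cite{Ze}.
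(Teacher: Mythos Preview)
Your proposal is correct and follows essentially the same approach as the paper: induction on the number of recombination steps, reducing to the claim that a single move preserves the property of being a right-truncation of $\mathfrak m$. You have simply unpacked in detail the step the paper dismisses as ``trivial to see'', correctly using genericity of $\mathfrak m$ to force the parent segments $\Delta_p,\Delta_q$ to be nested rather than linked or separated, which is exactly what makes $\Delta\cup\Delta'$ and $\Delta\cap\Delta'$ land as right-truncations of $\Delta_q$ and $\Delta_p$ respectively.
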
 
\begin{proof} This is proved by induction on the number of steps in the recombination process. If that number is 0 there is nothing to prove. Otherwise there is 
a pair of linked segments  $\Delta'$ and $\Delta''$ in $\mathfrak m'$ such that the first step in the recombination is replacing  
$\Delta'$ and $\Delta''$ by $\Delta' \cap \Delta''$ and $\Delta' \cup \Delta''$. It is trivial to see that the resulting multisegment is also obtained by right truncation from $\mathfrak m$. 
 The proof follows by induction. 
\end{proof}

\subsection{Finishing the proof}  Let $l=b-a+1$ be the relative length of $\Delta$. 
  We note that  ${}^{(i)}\pi_2$ is glued from $\St(\mathfrak m'_2)$ where $\mathfrak m'_2$ runs over all multisegments obtained from $\mathfrak m_2$ by truncating from the right 
  $i$-times (in the sense of absolute length). By the previous lemma the Bernstein center spectrum of ${}^{(i)}\pi_2$ is given by such generic multisegments. 
   Likewise,   $\St(\Delta)\times  \pi^{(i+1)}$  is glued from $\St(\Delta) \times \St({}'\mathfrak m)$ where ${}'\mathfrak m$ runs over all multisegments obtained from $\mathfrak m$ by truncating from the left $i+1$-times, and to determine the Bernstein center spectrum  we need to consider 
only  generic ${}'\mathfrak m$. However, $\{\Delta\}  \cup {}'\mathfrak m$ needs not be generic. There could be segments in 
${}'\mathfrak m$ linked to $\Delta$. Since $\Delta$ is not linked to any segment in $\mathfrak m$ and ${}'\mathfrak m$ is obtained  from $\mathfrak m$ by 
left truncation, it follows that linking occurs over the right end point of $\Delta$. 
Let $\Delta_0$ be the longest segment in ${}'\mathfrak m$ linked 
to $\Delta$. It is easy to see that $\Delta \cup \Delta_0$ is a segment in the generic multisegment corresponding to $\{\Delta\}  \cup {}'\mathfrak m$ by the 
recombination process.  Note that $\Delta \cup \Delta_0$ starts with $\nu^a\rho$ and is of relative length at least $l$. 
 Thus the Bernstein spectra of $\nu^{1/2} (\St(\Delta)\times  \pi^{(i+1)})$ and ${}^{(i)}\pi_2$ 
can have a point in common only if $\mathfrak m_2$ contains a segment starting with $\nu^{a+1/2}\rho$ and of relative length at least $l$. Similarly, 
the Bernstein spectra of $\nu^{-1/2}( \St(\Delta)\times  {}^{(i+1)}\pi)$ and ${}\pi_2^{(i)}$  can have a point in common only if $\mathfrak m_2$ contains a segment 
ending with $\nu^{b-1/2}\rho$ and of length at least $l$. In other words we have constructed a pair of linked segments in $\mathfrak m_2$, a contradiction. This completes the proof of the Ext-vanishing theorem. 

\section{Hecke algebra methods} 

The main goal of this section is to prove projectivity of an essentially square integrable representation $\pi_1$ of $G_{n+1}$ when restricted to $G_n$. The proof uses Hecke algebras 
and identifies all Bernstein components of $\pi_1$ with the sign-projective module of the Hecke algebra corresponding to the Bushnell-Kutzko type \cite{BK1}, \cite{BK2}, \cite{BK3}. 
 As a consequence, any two essentially square integrable representations of $G_{n+1}$ are isomorphic when restricted to $G_n$. 

\subsection{Hecke algebras}
Let $\Delta=[ \nu^a\rho, \ldots, \nu^b\rho ]$ be a Zelevinsky segment. Let $m=b-a+1$. The Bernstein component of $\St(\Delta)$ is equivalent to the category of representations 
of a Hecke algebra  $\mathcal H_m$ arising from a simple Bushnell-Kutzko type $\tau_{\Delta}$, that is, if $\pi$ is a smooth representation in the Bernstein component, 
then $\Hom(\tau_{\Delta}, \pi)$ is the corresponding $\mathcal H_m$-module. 
 The algebra $\mathcal H_m$ is isomorphic to the Iwahori Hecke algebra of $\GL_m(E)$, for some field $E$.  Thus, as an abstract algebra, 
 $\mathcal H_m$ is generated  by $\theta_1, \ldots, \theta_{m}$,  and $T_w$ ($w \in S_m$) satisfying the following relations:
\begin{enumerate}
\item $\theta_k\theta_l=\theta_l\theta_k$ for any $k,l=1,\ldots, m$;
\item $T_{s_k}\theta_k-\theta_{k+1}T_{s_k}=(q-1)\theta_{k}$, where $q$ is a prime power depending on $\tau_{\Delta}$ and $s_k$ is the transposition of numbers $k$ and $k+1$;
\item $T_{s_k}\theta_l =\theta_lT_{s_k}$ if $l \neq k, k+1$;
\item $(T_{s_k}-q)(T_{s_k}+1)=0$, where $s_k$ is as in (2), and $T_w$ satisfies a braid relation.
\end{enumerate}
Let $\mathcal A_m=\mathbb C[\theta_1^{\pm 1}, \ldots , \theta_m^{\pm 1}]$ and $\mathcal H_{S_m}$ be the span of $T_w$, $w\in S_m$. 
Then $\mathcal H_m \cong \mathcal A_m \otimes \mathcal H_{S_m}$.  The finite dimensional algebra $\mathcal H_{S_m}$  has a one dimensional 
sign representation $\mathrm{sgn}(T_w)= (-1)^{\ell(w)}$, where $\ell$ is the length function on $S_m$. An irreducible representation $\pi$ in the component is 
Whittaker generic if and only if $\Hom(\tau_{\Delta},\pi)$ contains the sign type as $\mathcal H_{S_m}$-module \cite{CS}. 

\smallskip 

Let $\Delta_1, \ldots, \Delta_r$ be segments such that  for $i\neq j$, the cuspidal representations $\rho_i$ and $\rho_j$ are not unramified twists of each other.  
The Bernstein component of $\St(\Delta_1)\times \cdots \times \St(\Delta_r)$ is equivalent to the category of representations 
of a Hecke algebra  $\mathcal H$ arising from a semi-simple Bushnell-Kutzko type $\tau$.  We have 
$\mathcal H\cong \mathcal H_{m_1} \otimes \cdots \otimes \mathcal H_{m_r}$ and $\mathcal H \cong \mathcal A \otimes \mathcal H_{S}$ where 
$\mathcal A\cong  \mathcal A_{m_1} \otimes \cdots  \otimes \mathcal A_{m_r}$ and $\mathcal H_{S}\cong \mathcal H_{S_{m_1}} \otimes \cdots \otimes  \mathcal H_{S_{m_r}}$. 
The subalgebra $\mathcal A$ is isomorphic to the ring of Laurent polynomials in $m=m_1+ \ldots + m_r$ variables, while $\mathcal H_{S}$ is spanned by $T_w$, 
$w\in S=S_{m_1} \times \cdots \times S_{m_r}$. An irreducible representation $\pi$ in the component can be written as $\pi_1\times \cdots \times \pi_r$ where 
$\pi_i$ is in the component of $\St(\Delta_i)$, thus it clear that $\pi$ is Whittaker generic if and only if $\Hom(\tau, \pi)$ contains the sign type of $\mathcal H_S$. 

\subsection{Some projective modules} 
Let $\chi$ be a character of $\mathcal A$. 
The $\mathcal H$-module $\mathcal H \otimes_{\mathcal A} \chi$ is called the principal series representation of $\mathcal H$. A twisted Steinberg representation 
of $\mathcal H$ is any one-dimensional $\mathcal H$-module such that the restriction to $\mathcal H_S$ is the sign type. For example, if $\pi= \St(\Delta_1)\times \cdots \times \St(\Delta_r)$, 
then $\Hom(\tau, \pi) $ is a twisted Steinberg representation. 

The following  is Theorem 2.1 in \cite{CS}. (It is stated there for $\mathcal H$ arising from the singleton partition $(m)$ but the proof is applicable to a general 
partition $(m_1, \ldots , m_r)$). 

\begin{theorem} \label{T:IMRN}
Let $\Pi$ be an $\mathcal H$-module. Assume that 
\begin{enumerate} 
\item $\Pi$  is projective and finitely generated.
\item $\dim \Hom_{\mathcal H}(\Pi, \pi)  \leq 1$ for an irreducible principal series representation $\pi$. 
\item A twisted Steinberg representation is a quotient of $\Pi$. 
\end{enumerate} 
Then $\Pi \cong \mathcal H\otimes_{\mathcal H_S} \mathrm{sgn}$. 
\end{theorem}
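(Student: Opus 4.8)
The plan is to prove this by a two-step argument: first show that any $\Pi$ satisfying (1)–(3) must admit a surjection $\mathcal H\otimes_{\mathcal H_S}\mathrm{sgn}\twoheadrightarrow\Pi$, and then show that this surjection is forced to be an isomorphism by a comparison of ``sizes'' over the commutative subalgebra $\mathcal A$. The module $\mathcal H\otimes_{\mathcal H_S}\mathrm{sgn}$ is the natural candidate: since $\mathcal H\cong\mathcal A\otimes\mathcal H_S$ as a left $\mathcal A$-module and a right $\mathcal H_S$-module, $\mathcal H\otimes_{\mathcal H_S}\mathrm{sgn}\cong\mathcal A$ as an $\mathcal A$-module, hence it is free of rank one over $\mathcal A$, and it is projective over $\mathcal H$ because $\mathrm{sgn}$ is a direct summand of $\mathcal H_S$ (the finite-dimensional algebra $\mathcal H_S$ is semisimple, being a Hecke algebra of a finite Coxeter group at a non-root-of-unity parameter, or one checks $\mathrm{sgn}$ is a projective $\mathcal H_S$-module directly). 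So $P:=\mathcal H\otimes_{\mathcal H_S}\mathrm{sgn}$ satisfies (1); it satisfies (3) trivially; and (2) is exactly the statement that for a character $\chi$ of $\mathcal A$, $\Hom_{\mathcal H}(P,\mathcal H\otimes_{\mathcal A}\chi)\cong\Hom_{\mathcal H_S}(\mathrm{sgn},\mathcal H\otimes_{\mathcal A}\chi)$, and the right-hand side is one-dimensional because $\mathcal H\otimes_{\mathcal A}\chi\cong\mathcal H_S$ as an $\mathcal H_S$-module, in which $\mathrm{sgn}$ appears once.

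\smallskip

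\emph{Constructing the surjection.} Given $\Pi$ satisfying the hypotheses, by (3) there is a twisted Steinberg quotient $\Pi\twoheadrightarrow\omega$. Restricting to $\mathcal H_S$, the sign type $\mathrm{sgn}$ is a quotient of $\Pi|_{\mathcal H_S}$; since $\mathcal H_S$ is semisimple this means $\mathrm{sgn}$ is a direct summand of $\Pi|_{\mathcal H_S}$, so there is a nonzero $\mathcal H_S$-map $\mathrm{sgn}\hookrightarrow\Pi|_{\mathcal H_S}$. By Frobenius reciprocity (extension of scalars along $\mathcal H_S\hookrightarrow\mathcal H$) this produces a nonzero $\mathcal H$-module map $\phi\colon P=\mathcal H\otimes_{\mathcal H_S}\mathrm{sgn}\to\Pi$. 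The image $\phi(P)$ is a submodule of $\Pi$ which surjects onto $\omega$ (it contains the image of $\mathrm{sgn}$, which already maps onto $\omega$). The goal is to show $\phi$ is surjective and injective.

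\smallskip

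\emph{Surjectivity and injectivity via $\mathcal A$-ranks.} This is where hypothesis (2) and projectivity do the work. Because $\Pi$ is projective and finitely generated over $\mathcal H$, it is projective and finitely generated over $\mathcal A$ (restriction of scalars along $\mathcal A\hookrightarrow\mathcal H$, and $\mathcal H$ is free of finite rank over $\mathcal A$), hence a finitely generated projective module over the Laurent polynomial ring $\mathcal A$; localizing/specializing at a generic character $\chi$ (a $\mathbb C$-point of $\operatorname{Spec}\mathcal A$) one extracts a generic $\mathcal A$-rank. One then argues that condition (2), $\dim\Hom_{\mathcal H}(\Pi,\mathcal H\otimes_{\mathcal A}\chi)\le 1$ for irreducible principal series $\pi=\mathcal H\otimes_{\mathcal A}\chi$, forces this generic rank of $\Pi$ to be exactly $1$: for generic $\chi$ the principal series $\mathcal H\otimes_{\mathcal A}\chi$ is irreducible, and $\dim\Hom_{\mathcal H}(\Pi,\mathcal H\otimes_{\mathcal A}\chi)$ computes the multiplicity of $\chi$-isotypic pieces, which equals the generic $\mathcal A$-rank of $\Pi$ (using projectivity to move $\Hom$ past specialization); it is $\ge 1$ since $\Pi$ is nonzero with a principal-series-related quotient and $\le 1$ by (2). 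Now $P$ also has generic $\mathcal A$-rank $1$, and $\phi\colon P\to\Pi$ is a map of projective $\mathcal A$-modules which is nonzero, hence injective on the generic fiber, hence of full rank; the cokernel $\operatorname{coker}\phi$ is then an $\mathcal H$-module of generic $\mathcal A$-rank $0$, i.e. a torsion $\mathcal A$-module. The final step is to show $\operatorname{coker}\phi=0$: since $\Pi$ is $\mathcal A$-projective (hence torsion-free) and $\phi(P)\subseteq\Pi$ has the same generic rank, $\Pi/\phi(P)$ is $\mathcal A$-torsion but also a quotient of the torsion-free... — the clean way is to invoke that $\operatorname{coker}\phi$, being a finitely generated $\mathcal H$-module supported on a proper closed subvariety of $\operatorname{Spec}\mathcal A$, cannot be a quotient of the projective $\mathcal H$-module $\Pi$ unless it is zero, because any nonzero such quotient would contradict (2) at some special $\chi$ in its support (one produces an extra $\Hom$ into a principal series there), together with injectivity of $\phi$ following from $P$ being $\mathcal A$-torsion-free of the same rank as its image.

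\smallskip

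\emph{Main obstacle.} The delicate point is the last step — upgrading ``$\phi$ is an isomorphism at the generic point of $\operatorname{Spec}\mathcal A$'' to ``$\phi$ is an isomorphism.'' Getting $\operatorname{coker}\phi=0$ and injectivity of $\phi$ honestly requires using hypothesis (2) at \emph{all} characters $\chi$, not just generic ones, and controlling how the multiplicity in $\Hom_{\mathcal H}(\Pi,\mathcal H\otimes_{\mathcal A}\chi)$ jumps (or fails to jump) at reducible principal series; this is exactly the content that makes Theorem~2.1 of \cite{CS} a theorem rather than an exercise, and the cleanest route is to follow the argument there verbatim, replacing the partition $(m)$ by $(m_1,\dots,m_r)$ throughout — the proof only uses the structure $\mathcal H\cong\mathcal A\otimes\mathcal H_S$ with $\mathcal A$ a Laurent polynomial ring and $\mathcal H_S$ a finite-dimensional semisimple algebra containing a one-dimensional $\mathrm{sgn}$, all of which hold in the general case.
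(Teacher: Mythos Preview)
The paper does not give a proof of this theorem --- it simply cites Theorem~2.1 of \cite{CS} and remarks that the argument there (stated for the singleton partition $(m)$) applies verbatim to the general partition $(m_1,\ldots,m_r)$. Your proposal reaches the same conclusion after sketching the strategy of \cite{CS} (construct $\phi\colon P\to\Pi$ via Frobenius reciprocity from a sign-type vector, compare $\mathcal A$-ranks using (2) at generic $\chi$, then kill the torsion cokernel), and you correctly identify the last step as the substantive content requiring the full force of (2).
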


As in \cite{CS}, we have the following corollary. 

\begin{corollary}  \label{C:GG} 
Let $\Gamma$ be the summand of the Gelfand-Graev representation corresponding to the Bernstein component of 
$\St(\Delta_1)\times \cdots \times \St(\Delta_r)$. Then we have an isomorphism 
$\Hom(\tau, \Gamma) \cong \mathcal H\otimes_{\mathcal H_S} \mathrm{sgn}$ of $\mathcal H$-modules. 
\end{corollary}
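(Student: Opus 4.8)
The plan is to deduce Corollary \ref{C:GG} from Theorem \ref{T:IMRN} by verifying that $\Pi = \Hom(\tau, \Gamma)$ satisfies the three hypotheses of that theorem, where $\Gamma$ is the Bernstein-component summand of the Gelfand-Graev representation. The equivalence of categories given by the Bushnell-Kutzko type $\tau$ turns statements about $\Gamma$ as a $G_n$-module into statements about $\Pi$ as an $\mathcal{H}$-module, so the entire argument is pushed through this dictionary and then Theorem \ref{T:IMRN} identifies $\Pi$ with $\mathcal{H}\otimes_{\mathcal{H}_S}\mathrm{sgn}$.

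First I would check projectivity and finite generation (hypothesis (1)). Projectivity of the Gelfand-Graev representation $\ind_{U_n}^{G_n}(\psi_n)$ is the main theorem of \cite{CS}, and projectivity is inherited by Bernstein-component summands; under the type equivalence a projective object corresponds to a projective $\mathcal{H}$-module. Finite generation of the Bernstein components of the Gelfand-Graev representation is the input cited from \cite{BH} (already used in the proof of Lemma \ref{L:finite_gen}), and this transports to finite generation of $\Pi$ over $\mathcal{H}$. Next, hypothesis (3): a twisted Steinberg representation must be a quotient of $\Pi$. Since $\St(\Delta_1)\times\cdots\times\St(\Delta_r)$ is an irreducible generic representation lying in this Bernstein component, it is a quotient of the Gelfand-Graev representation (Whittaker genericity means a nonzero Whittaker functional, i.e. a nonzero map from the Gelfand-Graev module), hence $\Hom(\tau,\St(\Delta_1)\times\cdots\times\St(\Delta_r))$ is a quotient of $\Pi$; by the discussion preceding the theorem this $\Hom$-module is precisely a twisted Steinberg representation of $\mathcal{H}$.

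The remaining and most delicate point is hypothesis (2): $\dim\Hom_{\mathcal H}(\Pi,\pi)\le 1$ for every irreducible principal series $\pi$. Translated back, this says $\dim\Hom_{G_n}(\Gamma,\pi')\le 1$ for the irreducible $G_n$-representations $\pi'$ in the component corresponding to irreducible principal series of $\mathcal{H}$ — equivalently, the Gelfand-Graev representation has at most a one-dimensional space of maps to such $\pi'$, which is the multiplicity-one statement for Whittaker functionals: an irreducible representation has at most one Whittaker functional. So this reduces to the classical uniqueness of Whittaker models for $G_n$, combined with the fact that $\Hom(\tau,-)$ on the relevant irreducibles is faithful on Whittaker functionals. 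I expect the subtlety here to be purely bookkeeping: matching "irreducible principal series of $\mathcal{H}$" with the precise class of irreducible $G_n$-representations for which the uniqueness statement is being invoked, and confirming — as is noted parenthetically in the statement of Theorem \ref{T:IMRN} — that the proof in \cite{CS}, written for the singleton partition $(m)$, goes through verbatim for the general partition $(m_1,\ldots,m_r)$ because $\mathcal{H}\cong\mathcal{H}_{m_1}\otimes\cdots\otimes\mathcal{H}_{m_r}$ splits everything into a tensor product. Having verified (1)--(3), Theorem \ref{T:IMRN} yields $\Pi\cong\mathcal{H}\otimes_{\mathcal H_S}\mathrm{sgn}$, which is exactly the assertion of the corollary.
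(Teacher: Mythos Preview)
Your proposal is correct and follows exactly the approach the paper intends: the paper's proof consists only of the phrase ``As in \cite{CS}'', meaning one verifies the three hypotheses of Theorem~\ref{T:IMRN} for $\Pi=\Hom(\tau,\Gamma)$ (projectivity from \cite{CS}, finite generation from \cite{BH}, multiplicity one from uniqueness of Whittaker models, and the twisted Steinberg quotient from genericity of $\St(\Delta_1)\times\cdots\times\St(\Delta_r)$) and applies that theorem. Your identification of the bookkeeping issue---extending from the singleton partition to $(m_1,\ldots,m_r)$ via the tensor factorization---is precisely the parenthetical remark the paper makes when stating Theorem~\ref{T:IMRN}.
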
 

\subsection{Projectivity for Hecke algebras} 

Let $\mathcal Z$ be the center of $\mathcal H$.  Recall that $\mathcal Z=\mathcal A^S$, in particular, $\mathcal H$ is a finitely generated $\mathcal Z$-module. 
Let $\mathcal J$ be a maximal ideal in $\mathcal Z$. 
Let $\hat{\mathcal H}$ denote the $\mathcal J$-adic completion of $\mathcal H$ \cite{AMD}. For every $\mathcal H$-module $\Pi$, let $\hat\Pi$ denote the 
$\mathcal J$-adic completion of $\Pi$. If $\Pi$ is finitely generated, then $\hat \Pi\cong \hat{\mathcal H} \otimes_{\mathcal H} \Pi$. 

\begin{theorem} \label{T:local_projectivity} 
 Let $\Pi$ be a finitely generated $\mathcal H$-module and  $\mathcal J$ a maximal ideal in $\mathcal Z$.  
 Let $\pi$ be the unique irreducible $\mathcal H$-module annihilated by $\mathcal J$ and containing the sign type. Assume that 
\begin{enumerate} 
\item $\dim \Hom_{\mathcal H}(\Pi, \pi) = 1$ 
\item $\Pi$ has no other irreducible quotients annihilated by $\mathcal J$. 
\item $\Pi$ contains a torsion free element for $\mathcal A$. 
\end{enumerate} 
Then $\hat\Pi \cong \hat{\mathcal H}\otimes_{\mathcal H_S} \mathrm{sgn}$. 
\end{theorem}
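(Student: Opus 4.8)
The plan is to reduce the statement to the global result in Theorem~\ref{T:IMRN} by passing to completions, so that one checks the three hypotheses of that theorem after completing at $\mathcal J$. First I would replace $\Pi$ by $\hat\Pi$ as an $\hat{\mathcal H}$-module; since $\mathcal H$ is a finitely generated module over its center $\mathcal Z = \mathcal A^S$, the $\mathcal J$-adic completion $\hat{\mathcal H}$ decomposes as a finite product of the completions $\mathcal H_{\mathfrak n}$ over the maximal ideals $\mathfrak n$ of $\mathcal A$ lying above $\mathcal J$, and these factors are permuted transitively by the finite group $S$. The sign type forces us to look at a single orbit, and the relevant idempotent cut of $\hat{\mathcal H}$ is again an Iwahori-type Hecke algebra localized near a single central character; crucially it is (up to Morita-type equivalence induced by the idempotent) of the same shape as $\mathcal H$ near the trivial-type situation, so that the statement of Theorem~\ref{T:IMRN} applies after completion. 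This "spreading out / descent" bookkeeping is the step I expect to be the main obstacle: one must be careful that completing is exact on finitely generated modules, that $\hat\Pi \cong \hat{\mathcal H}\otimes_{\mathcal H}\Pi$, and that projectivity and the $\mathrm{Hom}$-computations are detected locally.

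Once the local algebra is identified, I would verify the three hypotheses of Theorem~\ref{T:IMRN} for the completed module. Hypothesis (1) there — $\hat\Pi$ projective and finitely generated over $\hat{\mathcal H}$ — does not follow from the hypotheses here directly, so instead I would invoke a local criterion: a finitely generated module over the (Noetherian, module-finite over a complete local ring) algebra $\hat{\mathcal H}$ whose only relevant quotients are as prescribed, and which has no torsion obstruction, is forced to be the expected projective. Concretely, hypotheses (1) and (2) of the present theorem say that, modulo $\mathcal J$, the module $\hat\Pi/\mathcal J\hat\Pi$ has $\pi$ as its unique simple quotient-with-sign, so by Nakayama $\hat\Pi$ is generated by a single "sign-bearing" element, i.e. there is a surjection $\hat{\mathcal H}\otimes_{\mathcal H_S}\mathrm{sgn}\twoheadrightarrow\hat\Pi$. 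Hypothesis (3) — the presence of an $\mathcal A$-torsion-free element — is what lets one conclude this surjection is injective: the source $\hat{\mathcal H}\otimes_{\mathcal H_S}\mathrm{sgn}$ is free of finite rank over the completed Laurent-polynomial ring $\hat{\mathcal A}$, a kernel would be a nonzero torsion-free $\hat{\mathcal A}$-submodule of a module of smaller rank, contradicting the torsion-free element condition once one compares ranks at the generic point of $\hat{\mathcal A}$.

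Assembling: I would (i) set up the decomposition $\hat{\mathcal H}=\prod_{\mathfrak n\mid\mathcal J}\mathcal H_{\mathfrak n}$ with the $S$-action and reduce to the summand supported on sign; (ii) use Nakayama plus hypotheses (1)--(2) to produce the surjection $\hat{\mathcal H}\otimes_{\mathcal H_S}\mathrm{sgn}\twoheadrightarrow\hat\Pi$; (iii) use hypothesis (3) and a rank count over $\hat{\mathcal A}$ to show this map is an isomorphism; alternatively (iii$'$) deduce projectivity of $\hat\Pi$ over $\hat{\mathcal H}$ from freeness over $\hat{\mathcal A}$ together with $\hat{\mathcal H}$ being free over $\hat{\mathcal H}_S$, then cite Theorem~\ref{T:IMRN} verbatim in the completed setting. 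The main obstacle, as noted, is the first step: making the passage to $\hat{\mathcal H}$ and the identification of its sign-component with a Hecke algebra to which Theorem~\ref{T:IMRN} genuinely applies, rather than merely resembles.
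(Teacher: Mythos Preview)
Your steps (ii) and (iii) are exactly the paper's argument, and they constitute the whole proof; everything else you propose is unnecessary scaffolding. The paper does not invoke Theorem~\ref{T:IMRN} here at all (that theorem is used only later, for the global Corollary~\ref{C:global_vanishing}), and it does not decompose $\hat{\mathcal H}$ over the maximal ideals $\mathfrak n$ of $\mathcal A$ lying over $\mathcal J$. So the step you flag as ``the main obstacle'' simply does not arise: one works directly with $\hat{\mathcal H}$, $\hat{\mathcal A}$, and $\hat\Sigma:=\hat{\mathcal H}\otimes_{\mathcal H_S}\mathrm{sgn}$, with no further reduction.

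Two points where your sketch should be sharpened. First, in (ii) the paper is more explicit about why the surjection is from a \emph{single} copy of $\hat\Sigma$: since $\mathcal H_S$ is semisimple, hypothesis~(2) forces $\Pi/\mathcal J\Pi$ to be generated by its sign-isotypic subspace; if that subspace has dimension $r$, one gets a surjection $(\Sigma/\mathcal J\Sigma)^{\oplus r}\twoheadrightarrow \Pi/\mathcal J\Pi$ bijective on sign types, hence $\pi^{\oplus r}$ is a quotient of $\Pi$, and hypothesis~(1) gives $r=1$. Nakayama then lifts this to $\hat\Sigma\twoheadrightarrow\hat\Pi$. Second, your injectivity argument in (iii) is stated for ``finite rank'' and a generic rank comparison, but as written that does not produce a contradiction unless the rank is exactly one. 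The crucial fact is $\mathcal H\cong\mathcal A\otimes\mathcal H_S$, so $\Sigma\cong\mathcal A$ and hence $\hat\Sigma\cong\hat{\mathcal A}$ as $\hat{\mathcal A}$-modules. Then a nonzero kernel would make $\hat\Pi$ a proper quotient of $\hat{\mathcal A}$, so every element of $\hat\Pi$ would be $\hat{\mathcal A}$-torsion; but hypothesis~(3) together with flatness of $\hat{\mathcal Z}$ over $\mathcal Z$ gives an embedding $\hat{\mathcal A}\hookrightarrow\hat\Pi$, a contradiction. Drop (i) and (iii$'$), tighten (ii) and (iii) as above, and you have the paper's proof.
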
 
\begin{proof}  In order to simplify notation, write $\Sigma=\mathcal H\otimes_{\mathcal H_S} \mathrm{sgn}$. 
Since $\Pi$ is finitely generated, $\hat{\Pi}/\mathcal J\hat{\Pi} \cong \Pi/\mathcal J\Pi$ is a finite dimensional $\mathcal H$-module, annihilated by $\mathcal J$. 
By (2) it must be generated by the sign type subspace. Let $r$ be the dimension of the sign type in $\Pi/\mathcal J\Pi$. By Frobenius reciprocity, we have a surjection 
$f:\Sigma^{\oplus r} \rightarrow \Pi/\mathcal J\Pi$ which descends to a surjection $\bar f:(\Sigma/\mathcal J \Sigma)^{\oplus r} \rightarrow \Pi/\mathcal J\Pi$. Observe that $\bar f$ is bijective 
on the sign type, since the sign type in $\Sigma/\mathcal J\Sigma$ is one-dimensional. Since 
 $\pi$ is the unique irreducible quotient of $\Sigma/\mathcal J\Sigma$ and $\bar f$ is bijective on the sign type, 
 it follows that $\pi^r$ is a quotient of $\Pi/\mathcal J\Pi$. This forces $r=1$  by (1) and, 
 by Nakayama lemma, we have a surjection $\hat f: \hat{\Sigma} \rightarrow \hat{\Pi}$.     
Since $\hat{\Sigma}\cong  \hat{\mathcal A}$, as $\hat{\mathcal A}$-modules, (3) implies that the surjection is in fact an 
isomorphism. 

\end{proof} 

\begin{corollary} \label{C:local_vanishing} 
 Let $\Pi$ be a finitely generated $\mathcal H$-module and $\mathcal J$ a maximal ideal in $\mathcal Z$. 
Assume that the conditions of Theorem \ref{T:local_projectivity}  are satisfied. Then, for all $\mathcal H$-modules $\sigma$ annihilated by $\mathcal J$ and for all $i>0$, 
\[ 
\mathrm{Ext}^i_{\mathcal H} (\Pi, \sigma) =0. 
\] 
\end{corollary}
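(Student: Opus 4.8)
The plan is to deduce the vanishing from Theorem~\ref{T:local_projectivity}, which under the stated hypotheses identifies $\hat\Pi\cong\hat{\mathcal H}\otimes_{\mathcal H_S}\mathrm{sgn}$, together with the observation that this $\hat{\mathcal H}$-module is projective. To see the projectivity, note first that $\Sigma:=\mathcal H\otimes_{\mathcal H_S}\mathrm{sgn}$ is already projective over $\mathcal H$: since $q$ is a prime power, each $[k]_q:=1+q+\cdots+q^{k-1}$ is a positive integer, so the Poincar\'e polynomial $\prod_{k=1}^{m_j}[k]_q$ of $S_{m_j}$ does not vanish at $q$ and hence $\mathcal H_S\cong\mathcal H_{S_{m_1}}\otimes\cdots\otimes\mathcal H_{S_{m_r}}$ is a semisimple algebra; thus $\mathrm{sgn}$ is a projective $\mathcal H_S$-module, and as $\mathcal H\cong\mathcal A\otimes\mathcal H_S$ is free as a right $\mathcal H_S$-module, the induction functor $\mathcal H\otimes_{\mathcal H_S}(-)$ is left adjoint to the exact restriction functor and therefore takes projectives to projectives. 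Base change along $\mathcal H\to\hat{\mathcal H}$ then gives that $\hat\Sigma=\hat{\mathcal H}\otimes_{\mathcal H}\Sigma$ is a projective $\hat{\mathcal H}$-module, and $\hat\Pi\cong\hat\Sigma$.

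Next I would establish an isomorphism $\Ext^i_{\mathcal H}(\Pi,\sigma)\cong\Ext^i_{\hat{\mathcal H}}(\hat\Pi,\sigma)$ for every $\mathcal H$-module $\sigma$ with $\mathcal J\sigma=0$. The center $\mathcal Z$ is a finitely generated $\mathbb C$-algebra, hence Noetherian, and $\mathcal H$ is module-finite over $\mathcal Z$ and thus Noetherian; so the finitely generated module $\Pi$ has a resolution $P_\bullet\to\Pi$ by finitely generated free $\mathcal H$-modules. Since $\hat{\mathcal H}\cong\hat{\mathcal Z}\otimes_{\mathcal Z}\mathcal H$ is flat over $\mathcal H$ (because $\hat{\mathcal Z}$ is flat over the Noetherian ring $\mathcal Z$, see \cite{AMD}), the complex $\hat P_\bullet:=\hat{\mathcal H}\otimes_{\mathcal H}P_\bullet\to\hat\Pi$ is a resolution of $\hat\Pi$ by finitely generated free $\hat{\mathcal H}$-modules. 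Because $\mathcal J\sigma=0$, the action of $\mathcal H$ on $\sigma$ factors through $\mathcal H/\mathcal J\mathcal H\cong\hat{\mathcal H}/\mathcal J\hat{\mathcal H}$, so $\sigma$ is canonically an $\hat{\mathcal H}$-module whose restriction to $\mathcal H$ is the original module; the standard adjunction then yields $\Hom_{\hat{\mathcal H}}(\hat{\mathcal H}\otimes_{\mathcal H}P_j,\sigma)\cong\Hom_{\mathcal H}(P_j,\sigma)$ compatibly with the differentials, so the two cochain complexes coincide and the Ext-groups are identified. No finiteness hypothesis on $\sigma$ is used here.

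Finally, since $\hat\Pi$ is projective over $\hat{\mathcal H}$ we have $\Ext^i_{\hat{\mathcal H}}(\hat\Pi,\sigma)=0$ for all $i>0$, and hence $\Ext^i_{\mathcal H}(\Pi,\sigma)=0$ for all $i>0$, as asserted. The only step needing genuine care is the middle one: it rests on Noetherianity (to obtain a finitely generated free resolution), on the standard facts that $\mathcal J$-adic completion is exact on finitely generated $\mathcal H$-modules and coincides there with $\hat{\mathcal H}\otimes_{\mathcal H}(-)$, and on the observation that a $\mathcal J$-torsion $\mathcal H$-module promotes canonically to an $\hat{\mathcal H}$-module; the rest is formal homological algebra.
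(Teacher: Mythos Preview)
Your argument is correct and follows the same two-step outline as the paper: first identify $\Ext^i_{\mathcal H}(\Pi,\sigma)\cong\Ext^i_{\hat{\mathcal H}}(\hat\Pi,\sigma)$ using that $\sigma$ is $\mathcal J$-annihilated, then invoke projectivity of $\hat\Pi\cong\hat{\mathcal H}\otimes_{\mathcal H_S}\mathrm{sgn}$ from Theorem~\ref{T:local_projectivity}. The paper's proof is two sentences and leaves both steps unjustified; you have supplied the standard details (Noetherianity of $\mathcal H$, flatness of completion, semisimplicity of $\mathcal H_S$ at a prime-power parameter) that make those sentences honest.
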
 
\begin{proof}  Since $\sigma$ is annihilated by $\mathcal J$, we have 
\[ 
\mathrm{Ext}^i_{\mathcal H} (\Pi, \sigma) \cong \mathrm{Ext}^i_{\hat{\mathcal H}} (\hat{\Pi}, \sigma). 
\] 
The latter spaces are trivial by projectivity of $\hat{\mathcal H}\otimes_{\mathcal H_S} \mathrm{sgn}$.  
\end{proof} 

\begin{corollary} \label{C:global_vanishing} 
 Let $\Pi$ be a finitely generated $\mathcal H$-module. 
Assume that the conditions of Theorem \ref{T:local_projectivity} are satisfied for every maximal ideal in $\mathcal Z$. 
Then $\Pi \cong \mathcal H\otimes_{\mathcal H_S} \mathrm{sgn}$.
\end{corollary}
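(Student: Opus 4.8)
The plan is to upgrade the local isomorphisms supplied by Theorem \ref{T:local_projectivity} to a global one. By hypothesis, Theorem \ref{T:local_projectivity} applies at every maximal ideal $\mathcal J$ of $\mathcal Z$, so for each such $\mathcal J$ there is an isomorphism of $\hat{\mathcal H}$-modules $\hat\Pi \cong \hat{\mathcal H}\otimes_{\mathcal H_S}\mathrm{sgn}$, where the completions are $\mathcal J$-adic. Write $\Sigma = \mathcal H\otimes_{\mathcal H_S}\mathrm{sgn}$. Since $\mathcal H_S$ is semisimple and $\mathrm{sgn}$ is a direct summand of $\mathcal H_S$ as a left $\mathcal H_S$-module, $\Sigma$ is a direct summand of $\mathcal H$, hence a finitely generated projective $\mathcal H$-module, and for the same reason $\hat\Sigma = \hat{\mathcal H}\otimes_{\mathcal H_S}\mathrm{sgn}$ is $\hat{\mathcal H}$-projective. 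A module isomorphic to $\Sigma$ at every point of $\mathrm{Spec}\,\mathcal Z$ need not be globally isomorphic to $\Sigma$, so we proceed in two steps: first we prove that $\Pi$ is $\mathcal H$-projective, and then we verify the remaining hypotheses of Theorem \ref{T:IMRN}.

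For projectivity, note that $\mathcal Z = \mathcal A^S$ is a finitely generated $\mathbb C$-algebra, hence Noetherian, and $\mathcal H$ is a Noetherian ring because it is module-finite over $\mathcal Z$. Choose a presentation $0 \to K \to \mathcal H^{\oplus n} \to \Pi \to 0$ with $K$ finitely generated; it suffices to show $\Ext^1_{\mathcal H}(\Pi, K) = 0$, for then the presentation splits. This $\Ext$-group is a finitely generated $\mathcal Z$-module, so it vanishes if and only if all of its $\mathcal J$-adic completions vanish; by the standard compatibility of completion with $\Ext$ of finitely generated modules over a module-finite algebra, the $\mathcal J$-adic completion of $\Ext^1_{\mathcal H}(\Pi, K)$ is $\Ext^1_{\hat{\mathcal H}}(\hat\Pi, \hat K)$, which is zero because $\hat\Pi \cong \hat\Sigma$ is $\hat{\mathcal H}$-projective. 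Hence $\Pi$ is a finitely generated projective $\mathcal H$-module.

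It remains to verify hypotheses (2) and (3) of Theorem \ref{T:IMRN}. The key point is that if $\sigma$ is any $\mathcal H$-module annihilated by a maximal ideal $\mathcal J$ of $\mathcal Z$, then $\Hom_{\mathcal H}(\Pi, \sigma) = \Hom_{\mathcal H}(\Pi/\mathcal J\Pi, \sigma)$, and $\Pi/\mathcal J\Pi \cong \hat\Pi/\mathcal J\hat\Pi \cong \hat\Sigma/\mathcal J\hat\Sigma \cong \Sigma/\mathcal J\Sigma$ by the local isomorphism, so that $\Hom_{\mathcal H}(\Pi, \sigma) \cong \Hom_{\mathcal H}(\Sigma, \sigma)$, the latter being the sign-isotypic subspace of $\sigma$ as an $\mathcal H_S$-module. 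Taking $\sigma = \pi$ an irreducible principal series representation gives hypothesis (2): its restriction to $\mathcal H_S$ is the regular representation, so the sign-isotypic subspace is one-dimensional (equivalently, $\pi$ is the unique irreducible module with its central character that contains the sign type, so hypothesis (1) of Theorem \ref{T:local_projectivity} applies directly at that $\mathcal J$). Taking $\sigma$ to be a one-dimensional twisted Steinberg representation $\St_{\mathcal H}$ — such modules exist, e.g. by letting each $T_{s_k}$ act by $-1$, which via relation (2) forces $\theta_{k+1} = q\theta_k$ — gives $\Hom_{\mathcal H}(\Pi, \St_{\mathcal H}) \cong \St_{\mathcal H} \neq 0$, hence $\St_{\mathcal H}$ is a quotient of $\Pi$ because it is irreducible; this is hypothesis (3). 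Theorem \ref{T:IMRN} now yields $\Pi \cong \mathcal H \otimes_{\mathcal H_S} \mathrm{sgn}$.

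I expect the main obstacle to be the local-to-global step for projectivity, i.e. the assertion that a finitely generated $\mathcal H$-module whose $\mathcal J$-adic completions are $\hat{\mathcal H}$-projective for every maximal $\mathcal J \subset \mathcal Z$ is itself $\mathcal H$-projective; this rests on finite generation of $\Ext^1_{\mathcal H}(\Pi, K)$ over the Noetherian center and on the faithfully flat behaviour of $\mathcal J$-adic completion on finitely generated modules, both routine for module-finite algebras over a Noetherian ring but requiring care. An alternative that bypasses Theorem \ref{T:IMRN} is to construct the isomorphism directly: $\Hom_{\mathcal H}(\Sigma, \Pi)$ equals the sign-isotypic component $e_{\mathrm{sgn}}\Pi$, a finitely generated module over the commutative ring $e_{\mathrm{sgn}}\mathcal H e_{\mathrm{sgn}} \cong \mathcal Z$, and the local isomorphisms show that each of its completions is free of rank one; since $\mathcal Z$ is a localization of a polynomial ring, hence has trivial Picard group, $e_{\mathrm{sgn}}\Pi$ is free of rank one, and any generator defines a map $\Sigma \to \Pi$ that is an isomorphism on every completion, hence an isomorphism.
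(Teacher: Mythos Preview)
Your proof is correct and follows the same two-step outline as the paper: first establish that $\Pi$ is projective, then invoke Theorem~\ref{T:IMRN}. The implementations differ slightly. For projectivity, the paper routes through Corollary~\ref{C:local_vanishing} (so $\Ext^i_{\mathcal H}(\Pi,\sigma)=0$ for every $\sigma$ annihilated by some maximal ideal, hence for every finite-length $\sigma$) and then cites a projectivity criterion from \cite{CS}; your argument via a presentation and vanishing of $\Ext^1_{\mathcal H}(\Pi,K)$ on all completions is the same local-to-global principle, carried out by hand and without the external citation. Your verification of hypotheses (2) and (3) of Theorem~\ref{T:IMRN} through $\Pi/\mathcal J\Pi\cong\Sigma/\mathcal J\Sigma$ is fine but more elaborate than needed: both conditions are immediate from hypothesis~(1) of Theorem~\ref{T:local_projectivity}, since an irreducible principal series (resp.\ a twisted Steinberg) is exactly the unique irreducible containing the sign type at its central character. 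Your alternative via $e_{\mathrm{sgn}}\Pi$ and the triviality of $\mathrm{Pic}(\mathcal Z)$ is a genuinely different and pleasant route that avoids Theorem~\ref{T:IMRN} altogether.
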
 
\begin{proof}  Corollary \ref{C:local_vanishing} implies that $\mathrm{Ext}^i_{\mathcal H} (\Pi, \sigma) =0$  for all finite length modules. 
Since $\Pi$ is also finitely generated, it is projective by Theorem 8.1 in the appendix of \cite{CS}.  Now we can apply Theorem \ref{T:IMRN}. 
\end{proof} 

\subsection{Projectivity for groups} 
Now we can apply the Hecke-module results to the restriction problem, one Bernstein component at the time. Let $\pi_1$ be an irreducible generic representation of 
$G_{n+1}$ and fix a Bushnell-Kutzko type $\tau$ for $G_n$. Let $\Pi= \Hom(\tau, \pi_1)$ be the corresponding $\mathcal H$-module. Note that the conditions (1) and (3) 
in Theorem \ref{T:local_projectivity} are satisfied for every maximal ideal $\mathcal J$. Indeed, the condition (1) because all irreducible generic $G_n$-representations are 
quotients of $\pi_1$ with multiplicity one and (3) because $\pi_1$,  restricted to $G_n$,  contains 
the Gelfand-Graev representation, a free $\mathcal A$-module.  Theorem \ref{T:local_projectivity} implies the following local Ext vanishing result for groups. 

\begin{theorem} \label{T:vanish_local} Let $\pi_1$ be an irreducible generic representation of $G_{n+1}$. Let $\mathcal J$ be a maximal ideal of the Bernstein center of 
$G_n$. Assume that no degenerate irreducible representation of $G_n$ annihilated by $\mathcal J$ is a quotient of $\pi_1$. Then 
$\mathrm{Ext}^i_{G_n} (\pi_1, \pi_2)=0$, $i>0$, for all irreducible representation $\pi_2$ of $G_n$ annihilated by $\mathcal J$. 
\end{theorem}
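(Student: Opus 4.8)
The plan is to reduce the group-theoretic Ext-vanishing to the Hecke-algebra statement Corollary \ref{C:local_vanishing}, applied to the single Bernstein component of $G_n$ determined by $\mathcal J$. First I would fix a Bushnell-Kutzko type $\tau$ for the Bernstein component of $G_n$ cut out by $\mathcal J$, so that the functor $\pi\mapsto \Hom(\tau,\pi)$ gives an equivalence between that component and the category of modules over the associated Hecke algebra $\mathcal H$, carrying $\pi_1$ (or rather its isotypic piece) to a module $\Pi=\Hom(\tau,\pi_1)$ and $\pi_2$ to an $\mathcal H$-module annihilated by the maximal ideal of $\mathcal Z$ corresponding to $\mathcal J$. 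Since Ext-groups in the component are computed by the equivalence, it suffices to check $\Ext^i_{\mathcal H}(\Pi,\Hom(\tau,\pi_2))=0$ for $i>0$, which is exactly the conclusion of Corollary \ref{C:local_vanishing} once its hypotheses are verified.

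So the core of the proof is checking that $\Pi=\Hom(\tau,\pi_1)$ satisfies conditions (1), (2), (3) of Theorem \ref{T:local_projectivity} for the maximal ideal attached to $\mathcal J$, together with finite generation. Finite generation of $\Pi$ as an $\mathcal H$-module follows from Lemma \ref{L:finite_gen}: the restriction of $\pi_1$ to $G_n$, via its Bernstein-Zelevinsky filtration with Gelfand-Graev bottom piece, has finitely generated derivatives (they are again essentially square integrable, or at worst finite length, hence finitely generated), so each Bernstein component of $\pi_1|_{G_n}$ is finitely generated. Conditions (1) and (3) are, as the text already notes, essentially formal: (1) holds because every irreducible generic representation of $G_n$ is a quotient of $\pi_1$ with multiplicity exactly one by the multiplicity-one theorems of \cite{AGRS,AG,SZ} combined with Prasad's Euler-Poincar\'e formula (or, on the Hecke side, $\pi$ is the unique irreducible $\mathcal H$-module annihilated by $\mathcal J$ and containing the sign type, and $\dim\Hom(\Pi,\pi)=\dim\Hom_{G_n}(\pi_1,\pi_2^{\mathrm{gen}})=1$); and (3) holds because $\pi_1|_{G_n}$ contains the Gelfand-Graev representation of $G_n$, whose relevant summand $\Hom(\tau,\Gamma)$ is free over $\mathcal A$ by Corollary \ref{C:GG}, so $\Pi$ contains an $\mathcal A$-torsion-free element.

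The real content, and the place where the hypothesis of the theorem is used, is condition (2): $\Pi$ has no irreducible quotient annihilated by $\mathcal J$ other than the generic one $\pi$. Translated back to groups, this says $\pi_1$ has no degenerate irreducible quotient $\pi_2$ with $\pi_2$ annihilated by $\mathcal J$ — which is precisely the hypothesis imposed in the statement. I would spell out this translation carefully: an irreducible $\mathcal H$-module annihilated by the maximal ideal of $\mathcal Z$ corresponds to an irreducible representation of $G_n$ in the component with the matching cuspidal support, a quotient of $\Pi$ corresponds to a quotient of $\pi_1$, and such an $\mathcal H$-module fails to contain the sign type exactly when the corresponding $G_n$-representation is degenerate. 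Hence the group-level hypothesis gives condition (2) verbatim. With all hypotheses of Theorem \ref{T:local_projectivity} in hand, Corollary \ref{C:local_vanishing} yields $\Ext^i_{\mathcal H}(\Pi,\sigma)=0$ for $i>0$ and all $\sigma$ annihilated by $\mathcal J$, and transporting back through the type equivalence completes the proof.

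\textbf{Main obstacle.} The step I expect to require the most care is not any single estimate but the bookkeeping around the category equivalence: making sure that $\Ext^i$ computed in $\Alg(G_n)$ for representations supported on a single Bernstein component agrees with $\Ext^i$ over $\mathcal H$ (this uses that the component is a direct factor of $\Alg(G_n)$ and that the type equivalence is exact), and that "annihilated by $\mathcal J$" on the two sides of the equivalence really do correspond. Everything else — finite generation from Lemma \ref{L:finite_gen}, conditions (1) and (3) from multiplicity one and Corollary \ref{C:GG}, condition (2) from the standing hypothesis — is then a short unwinding of definitions.
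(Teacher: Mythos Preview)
Your proposal is correct and follows essentially the same route as the paper: pass to the Hecke algebra via a Bushnell--Kutzko type, verify the hypotheses of Theorem~\ref{T:local_projectivity} (with (1) from multiplicity one for generic quotients, (3) from the Gelfand--Graev submodule being $\mathcal A$-free via Corollary~\ref{C:GG}, and (2) being exactly the hypothesis of the theorem), and then invoke Corollary~\ref{C:local_vanishing}. If anything you are more careful than the paper, which leaves the finite-generation check implicit; your use of Lemma~\ref{L:finite_gen} here is exactly right (the derivatives of an irreducible $\pi_1$ are of finite length---the phrase ``again essentially square integrable'' in your parenthetical is a slip, but ``finite length, hence finitely generated'' is what you need and is correct).
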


Finally since, by Theorem \ref{T:St_generic}, essentially square integrable representation have no degenerate quotients, Corollary \ref{C:global_vanishing} implies: 

\begin{theorem}\label{T:projective_general} 
 Let $\pi_1$ be an essentially square integrable representation of $G_{n+1}$. Then  $\pi_1$ considered a $G_n$-module, is projective. Moreover, 
if $\pi'_1$ is another essentially square integrable representation of $G_{n+1}$ then  $\pi_1$ and $\pi'_1$ are isomorphic as $G_n$-modules. 
\end{theorem}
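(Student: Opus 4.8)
The plan is to derive Theorem \ref{T:projective_general} from the Hecke-algebra machinery developed in this section, by checking that each Bernstein component of $\pi_1|_{G_n}$ meets the hypotheses of Corollary \ref{C:global_vanishing}. Fix an essentially square integrable $\pi_1 = \St(\Delta)$ of $G_{n+1}$. For a fixed Bernstein component of $G_n$, let $\tau$ be a (semi-simple) Bushnell-Kutzko type, $\mathcal H$ the associated Hecke algebra, and $\Pi = \Hom(\tau, \pi_1)$ the corresponding $\mathcal H$-module. The goal is to show $\Pi \cong \mathcal H \otimes_{\mathcal H_S} \mathrm{sgn}$, which by the equivalence of categories translates into the statement that the component of $\pi_1$ is the summand $\Gamma$ of the Gelfand-Graev representation of $G_n$ attached to that component (using Corollary \ref{C:GG}); since $\Gamma$ is manifestly independent of $\pi_1$, this simultaneously gives projectivity of that component (Gelfand-Graev summands are projective by \cite{CS}) and the asserted isomorphism $\pi_1 \cong \pi'_1$ as $G_n$-modules.

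\textbf{Verifying the hypotheses of Corollary \ref{C:global_vanishing}.} First I would check that $\Pi$ is finitely generated as an $\mathcal H$-module: this follows from Lemma \ref{L:finite_gen} applied to $\tau := \pi_1|_{M_{n+1}}$, whose derivatives $\pi_1^{(k)} = \St({}^{(j)}\Delta)$ (for $k = jr$) are irreducible, hence finitely generated, so the Bernstein components of $\pi_1|_{G_n}$ are finitely generated; finite generation of $\Pi$ over $\mathcal H$ is then equivalent to finite generation of the component over $G_n$. Next, for each maximal ideal $\mathcal J$ of $\mathcal Z$ I must verify conditions (1), (2), (3) of Theorem \ref{T:local_projectivity}. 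As noted in the paragraph preceding Theorem \ref{T:vanish_local}, conditions (1) and (3) hold for every $\mathcal J$: condition (1) because every irreducible generic representation of $G_n$ occurs as a quotient of $\pi_1$ with multiplicity exactly one (the Ext-vanishing theorem, or more elementarily the multiplicity-one theorem of \cite{AGRS}), and condition (3) because $\pi_1|_{G_n}$ contains the Gelfand-Graev representation as the bottom piece $\tau_n$ of its Bernstein-Zelevinsky filtration, and the Gelfand-Graev summand is a free $\mathcal A$-module, hence supplies a torsion-free element. The only remaining point is condition (2): $\Pi$ has no irreducible quotient annihilated by $\mathcal J$ other than the unique one containing the sign type. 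Translated back to groups, this says $\pi_1$ has no \emph{degenerate} irreducible quotient in the given Bernstein component --- and this is precisely Theorem \ref{T:St_generic}, which asserts that every irreducible quotient of $\St(\Delta)$ is generic.

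\textbf{Conclusion.} Having checked that for every maximal ideal $\mathcal J$ of $\mathcal Z$ the module $\Pi$ satisfies the three hypotheses of Theorem \ref{T:local_projectivity}, Corollary \ref{C:global_vanishing} yields $\Pi \cong \mathcal H \otimes_{\mathcal H_S} \mathrm{sgn}$. Running over all Bernstein components of $G_n$, and using that $\pi_1|_{G_n}$ decomposes as the direct sum of its components, each component of $\pi_1$ is equivalent (under the type equivalence) to $\mathcal H \otimes_{\mathcal H_S}\mathrm{sgn} \cong \Hom(\tau,\Gamma)$, i.e. is isomorphic to the corresponding Gelfand-Graev summand $\Gamma$ of $G_n$. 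Since the right-hand side is independent of $\pi_1$, this proves both that $\pi_1$ is projective as a $G_n$-module (being, componentwise, a Gelfand-Graev summand, which is projective) and that $\pi_1 \cong \pi'_1$ as $G_n$-modules for any two essentially square integrable $\pi_1, \pi'_1$ of $G_{n+1}$.

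\textbf{Main obstacle.} The substantive input is the combination of Theorem \ref{T:St_generic} (no degenerate quotients) with the already-proved Hecke-algebra projectivity criterion; the step most likely to require care is the bookkeeping that identifies ``condition (2) of Theorem \ref{T:local_projectivity} for the $\mathcal H$-module $\Pi$'' with ``$\St(\Delta)$ has no degenerate quotient annihilated by $\mathcal J$'', and the parallel identification of finite generation of $\Pi$ with Lemma \ref{L:finite_gen} --- both are routine once one unwinds the type-theoretic equivalence, but they are the places where one must be attentive to normalizations and to the passage between $M_{n+1}$-modules and $G_n$-modules.
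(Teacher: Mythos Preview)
Your proposal is correct and follows essentially the same route as the paper: verify finite generation via Lemma \ref{L:finite_gen}, invoke the paragraph preceding Theorem \ref{T:vanish_local} for conditions (1) and (3), use Theorem \ref{T:St_generic} for condition (2), and then apply Corollary \ref{C:global_vanishing} componentwise to identify each Bernstein component of $\pi_1$ with the sign-induced module, hence with the Gelfand-Graev summand. The paper's proof is the same argument compressed into a single sentence.
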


\section{Appendix} 

In this appendix we prove Lemma \ref{lem second adjointness bz der}, that is, the second adjointness isomorphism for Bernstein-Zelevisky derivatives. The key 
ingredient is Rodier's approximation \cite{Ro} of the Whittaker character by characters of compact pro-$p$ groups.

\subsection{Groups} Let $F$ be a $p$-adic field, $R$ its ring of integers and $P$ the maximal ideal generated by a prime $\varpi$.
Let $\psi$ be the character of $F$ of conductor $R$. Let $G=\GL_n(F)$ and let 
$U$ be the group of unipotent upper triangular matrices in $G$. 
Let $\psi_U: U \rightarrow \mathbb C$ be a Whittaker character defined by 
\[
\psi_U(u)=\psi(u_{1,2} + \cdots +u_{n-1,n})
\] 
 where $u_{i,j}$ denote the entries of the matrix $u$. 
 
 \smallskip 
For every natural number $r$, 
let $L_r$ be the lattice in $M_n(F)$ consisting of all matrices whose entries are in $P^r$. Then 
\[ 
K_r = 1 + L_r
\] 
is a principal congruence subgroup of $G$. Let $t=(t_i)\in G$ be a diagonal matrix such that $t_i/t_{i+1}=\varpi^2$ for $i=1,\ldots, n-1$. 
 Let $H_r=t^{-r} K_r t^{r}$.  Let  $B^{\top}$ be the Borel 
subgroup of lower-triangular matrices. Then we have a parhoric decomposition  
\[ 
H_r= (H_r\cap B^{\top}) (H_r\cap U). 
\] 
The sequence of groups $H_r\cap B^{\top}$ is decreasing with trivial intersection, while the sequence of groups $H_r\cap U$ is increasing with union $U$. 
 Let $\psi_r$ be a character of $H_r$ defined by 
\[ 
\psi_r(g) = \psi( g_{1,2} + \cdots +g_{n-1,n}). 
\]
Observe that 
\[ 
\psi_r|_{H_r \cap U} = \psi_U|_{H_r\cap U}. 
\]

\subsection{Representations}  Let  $\pi$ be a smooth $G$-module.   For every non-negative integer $r$ we have a projection map $P_r : \pi \rightarrow \pi^{H_r, \psi_r}$ defined 
by 
\[ 
P_r(v) = \mathrm{vol}(H_r)^{-1} \int_{H_r} \bar\psi_r(u) \pi(g) v ~dg.
\]  
 
 For $r\leq s $ we have maps $i_r^s : \pi^{H_r, \psi_r} \rightarrow \pi^{H_s, \psi_s}$ defined by restricting $P_s$ to $\pi^{H_r, \psi_r}$. From the parahoric decomposition of 
 $H_r$, it is easy to see that 
\[ 
i_r^s(v) =  \mathrm{vol}(H_s\cap U)^{-1}\int_{H_s\cap U} \bar\psi_s(u) \pi(u) v ~du. 
\] 
This formula, in turn, implies that these maps form a direct system i.e. $i_s^t\circ i_r^s = i_r^t$, for $r \leq s \leq t$.  
We have natural maps $i_r : \pi^{H_r, \psi_r} \rightarrow \pi_{U,\psi_U}$.  
Observe that $i_s\circ i_r^s = i_r$. Hence we have a  map 
\[ 
i_\pi : \lim_r  \pi^{H_r, \psi_r} \rightarrow \pi_{U,\psi_U}.
\]

\begin{proposition} \label{P:compact_approximation}  For every smooth $G$-module $\pi$ the map $i_{\pi}$ is an isomorphism of vector spaces. 
\end{proposition}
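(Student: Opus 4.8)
# Proof Proposal for Proposition \ref{P:compact_approximation}

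\textbf{Overall strategy.} The plan is to construct an explicit inverse to $i_\pi$, or equivalently to verify injectivity and surjectivity separately using the parahoric decomposition $H_r = (H_r\cap B^{\top})(H_r\cap U)$ together with the facts that $\{H_r\cap U\}$ is increasing with union $U$ and $\{H_r\cap B^{\top}\}$ is decreasing with trivial intersection. The key point is that the Whittaker coinvariants $\pi_{U,\psi_U}$ are computed via the single unipotent group $U$, whereas each $\pi^{H_r,\psi_r}$ is an honest subspace cut out by a \emph{compact} group $H_r$; Rodier's approximation lets us interpolate between the two.

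\textbf{Surjectivity of $i_\pi$.} Given a class $\bar v \in \pi_{U,\psi_U}$, lift it to $v\in\pi$. Since $\pi$ is smooth, $v$ is fixed by $K_m$ for some $m$, and after averaging over $H_s\cap U$ against $\bar\psi_s$ for $s$ large enough (so that $H_s\cap U$ contains the relevant part of $U$ needed to exhibit $\bar v$ as a coinvariant class), one produces an element of $\pi^{H_s\cap U,\psi_U}$ mapping to $\bar v$ in $\pi_{U,\psi_U}$. The content is to promote invariance under $H_s\cap U$ to invariance under all of $H_s$, i.e.\ also under $H_s\cap B^{\top}$; this is where the geometry of the diagonal element $t$ with $t_i/t_{i+1}=\varpi^2$ enters — conjugating by a large power of $t$ shrinks $H_s\cap B^{\top}$ while the averaging over $H_s\cap U$ persists, so a further application of $P_{s'}$ for $s'\gg s$ lands in $\pi^{H_{s'},\psi_{s'}}$ without changing the image in $\pi_{U,\psi_U}$ (because $i_{s'}\circ i_s^{s'}=i_s$). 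Thus $\bar v$ is in the image of $i_\pi$.

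\textbf{Injectivity of $i_\pi$.} Suppose $v\in\pi^{H_r,\psi_r}$ has $i_r(v)=0$ in $\pi_{U,\psi_U}$. By definition of the coinvariants, this means $v = \sum_j (\pi(u_j)-\psi_U(u_j))w_j$ for finitely many $u_j\in U$, $w_j\in\pi$. All the $u_j$ lie in $H_s\cap U$ for $s$ large, and since $v$ is already $(H_r,\psi_r)$-equivariant, applying the projector $P_s$ (which restricted to $\pi^{H_r,\psi_r}$ is exactly $i_r^s$, by the displayed averaging formula) kills the right-hand side: each term $(\pi(u_j)-\psi_U(u_j))w_j$ is annihilated by averaging over $H_s\cap U$ against $\bar\psi_s$ once $u_j\in H_s\cap U$ and $\psi_s|_{H_s\cap U}=\psi_U|_{H_s\cap U}$. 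Hence $i_r^s(v)=0$, so $v$ maps to $0$ in the direct limit $\varinjlim_r \pi^{H_r,\psi_r}$. Therefore $i_\pi$ is injective.

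\textbf{Main obstacle.} The delicate step is the surjectivity argument, specifically producing a vector that is simultaneously $(H_{s'}\cap U)$-equivariant \emph{and} $(H_{s'}\cap B^{\top})$-invariant. One must check that the averaging operators $P_s$ do not destroy the equivariance already achieved on the unipotent side, and that the modular/volume normalizations in the definition of $P_r$ and $i_r^s$ are consistent so that the maps genuinely form a direct system compatible with the $i_r$; the excerpt already records the needed compatibilities ($i_s^t\circ i_r^s=i_r^t$ and $i_s\circ i_r^s=i_r$), so the remaining work is to verify that every lift can be pushed far enough up the tower. Once this is in place, combining injectivity and surjectivity yields that $i_\pi$ is a bijection of vector spaces.
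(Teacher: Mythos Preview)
Your injectivity argument is essentially the paper's: both use that a vector dying in $\pi_{U,\psi_U}$ is killed by averaging over a large enough compact piece of $U$, hence by $i_r^s$ for $s\gg r$.

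For surjectivity you have the right ingredients but have tangled the order of operations, and the justification you give for the ``delicate step'' is not valid as stated. You first average $v$ over $H_s\cap U$ to get some $w_1$, and then apply $P_{s'}$, claiming the coinvariant image is unchanged ``because $i_{s'}\circ i_s^{s'}=i_s$''. But that compatibility is only recorded for vectors already in $\pi^{H_s,\psi_s}$, which $w_1$ need not be; averaging $w_1$ over $H_{s'}\cap B^{\top}$ has no a priori reason to preserve the class in $\pi_{U,\psi_U}$. The paper sidesteps this entirely by reversing the order: given $v\in\pi$, use smoothness of $v$ together with $H_r\cap B^{\top}\to\{1\}$ to choose $r$ so large that $v$ is \emph{already} $H_r\cap B^{\top}$-fixed. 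Then the single average
\[
w=\mathrm{vol}(H_r\cap U)^{-1}\int_{H_r\cap U}\bar\psi_r(u)\,\pi(u)v\,du
\]
equals $P_r(v)$ (by the parahoric factorization and triviality of $\psi_r$ on $H_r\cap B^{\top}$), hence lies in $\pi^{H_r,\psi_r}$, and visibly has the same image as $v$ in $\pi_{U,\psi_U}$. So the ``main obstacle'' you flag is not an obstacle at all once you secure $B^{\top}$-invariance \emph{before} doing any $U$-averaging; your two-step procedure can be collapsed to one, and the compatibility $i_{s'}\circ i_s^{s'}=i_s$ is never needed here.
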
 
\begin{proof}  Surjectivity: Let $v\in \pi$.  Since  $H_r\cap B^{\top} \rightarrow \{1\}$
there exists $r$ such that $v$ is $H_r\cap B^{\top}$-invariant.
 Let 
\[ 
w=\mathrm{vol}(H_r\cap U)^{-1}  \int_{H_r\cap U} \bar\psi_r(u) \pi(u) v ~du \in \pi^{H_r,\psi_r}. 
\] 
Then $v$ and $w$ have the same projection on $\pi_{U,\psi_U}$.  Injectivity: Let $v\in \pi^{H_r,\psi_r}$ that projects to $0$ in $\pi_{U,\psi_U}$. Then  there exists an 
open compact subgroup $U_c \subset U$ such that 
\[ 
\int_{U_c} \bar\psi_s(u) \pi(u) v ~du =0. 
\] 
Since $H_s\cap U\rightarrow U$ there exists $s\geq r$ such that $ H_s\cap U \supset U_c$. Then the above integral, with $U_c$ substituted by $H_s\cap U$, vanishes. 
In other words, $i_r^s(v)=0$ and hence $v=0$, viewed as an element of the direct limit. 
\end{proof} 

For $r\leq s$ we have maps $p_r^s : \pi^{H_s, \psi_s} \rightarrow \pi^{H_r, \psi_r}$,  going in the opposite direction, defined by restricting 
$P_r$ to $\pi^{H_s \psi_s}$. From the parahoric decomposition of 
 $H_r$, it is easy to see that 
\[ 
 p_r^s(v) = \mathrm{vol}(H_r\cap B^{\top})^{-1}\int_{H_r\cap B^{\top}} \pi(g) v ~dg 
\] 
and this implies that these maps form an inverse  system i.e. $p_r^s\circ p_s^t = p_r^t$, for $r \leq s \leq t$. 

\smallskip 
 By Proposition 4 in  \cite{Ro}  (see also VI, page 169), there exists an integer $r_0$, independent of 
$\pi$, such that $p_r^s \circ i_r^s$ is a non-zero multiple of the identity on $\pi^{H_r,\psi_r}$, if $r_0 \leq r \leq s$. 
 Thus $i_r^s$ is an injection and $p_s^r$ is a surjection.  
 It follows, from Proposition \ref{P:compact_approximation}, that the maps $i_r : \pi^{H_r,\psi_r} \rightarrow \pi_{U,\psi_U}$ are injections, for all $r\geq r_0$. 
 
 \smallskip 
 We shall use surjectivity of the maps $p_r^s$ to construct a natural complement of $\pi^{H_r,\psi_r}$ in $\pi_{U,\psi_U}$. So fix $r\geq r_0$, and for every $s\geq r$, let 
$\tau_s$ be the kernel of $p_r^s$. Observe that $\tau_s$ is a complement of $\pi^{H_r,\psi_r}$ in $\pi^{H_s,\psi_s}$, where we have identified $\pi^{H_r,\psi_r}$ with its 
image in $\pi^{H_s,\psi_s}$. 
We claim that $\tau_s$, for $s\geq r$ form an injective subsystem. To that end, let $t\geq s$. We need to prove if $v\in \tau_s$ then 
$i_s^t(v) \in \tau_t$, that is, $p_r^t (i_s^t (v))=0$. Write $p_r^t = p_r^s  \circ p_s^t $. Then 
\[ 
p_r^t (i_s^t (v))= p_r^s  \circ p_s^t ( (i_s^t (v))=  p_r^s ( p_s^t  \circ i_s^t (v)) = 0 
\] 
where for the last equality we used that $p_s^t  \circ i_s^t (v)$ is a multiple of $v$. 
Hence 
\[ 
\pi^{H_r,\psi_r}_c := \lim_{s\geq r} \tau_s
\] 
 is a complement of $\pi^{H_r,\psi_r}$ in $\lim_{s\geq r}\pi^{H_r,\psi_r} \cong \pi_{U,\psi_U}$.

\smallskip 

We apply the above considerations to $\pi=S(G)$, the space of locally constant, compactly supported functions on $G$, considered a $G$-module with respect to the action by 
left translations. In this case, the vector spaces $\pi^{H_r,\psi_r}$ and $\pi_{U,\psi_U}$  are naturally $G$-modules, coming from the right translation action of $G$ on $S(G)$ and the maps $i_r^s$, $p_r^s$ and $i_r$ are $G$-morphisms. Observe that 
$S(G)^{H_r,\psi_r}= \ind_{H_r}^G(\psi_r)$ and 
$S(G)_{U,\psi_U}\cong \ind_{U}^G(\psi)$, the Gelfand-Graev representation.  
Hence $\lim_r \ind_{H_r}^G(\psi_r) \cong \ind_{U}^G(\psi)$, as $G$-modules.  
Moreover, if $r\geq r_0$, then 
$\ind_{H_r}^G(\psi_r)$ is a direct summand of $\ind_{U}^G(\psi)$. We record this in the following:

\begin{proposition} \label{P:summand}  For every $r\geq r_0$, $\ind_{H_r}^G(\psi_r)$ is a direct $G$-invariant summand of  $\ind_{U}^G(\psi)$:  
\[ 
\ind_{U}^G(\psi) \cong  \ind_{H_r}^G(\psi_r)  \oplus  \ind_{H_r}^G(\psi_r)_c.   
\] 
\end{proposition}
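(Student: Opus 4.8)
\emph{Proof proposal.} The plan is to specialize the general vector-space construction developed above to the particular module $\pi = S(G)$, and then to observe that every map in sight is equivariant for the right-translation action of $G$, so that the vector-space complement already built is automatically a $G$-submodule.

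First I would recall that $S(G)$, made into a smooth $G$-module by left translations, also carries a commuting right-translation action of $G$. Since the twisted coinvariant and invariant functors taken with respect to the \emph{left} action of $H_r$ and of $U$ commute with the right action, the spaces $S(G)^{H_r,\psi_r} = \ind_{H_r}^G(\psi_r)$ and $S(G)_{U,\psi_U} \cong \ind_{U}^G(\psi)$ inherit $G$-module structures, and the averaging maps $i_r^s$, $p_r^s$, $i_r$ --- all given by integration against the left action --- are morphisms of $G$-modules.

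Next I would invoke the consequence of Rodier's approximation recorded just above: for $r \geq r_0$ the map $i_r : S(G)^{H_r,\psi_r} \to S(G)_{U,\psi_U}$ is injective, hence realizes $\ind_{H_r}^G(\psi_r)$ as a $G$-submodule of $\ind_{U}^G(\psi)$. For the complement I would set $\ind_{H_r}^G(\psi_r)_c := \lim_{s\geq r}\tau_s$ with $\tau_s = \ker p_r^s$; since each $p_r^s$ is $G$-equivariant, $\tau_s$ is a $G$-submodule of $S(G)^{H_s,\psi_s}$, and passing to the direct limit along the injective subsystem $\{\tau_s\}_{s\geq r}$ exhibits $\ind_{H_r}^G(\psi_r)_c$ as a $G$-submodule of $\ind_{U}^G(\psi)$. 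The discussion preceding the statement already shows that this subspace is a vector-space complement of $\ind_{H_r}^G(\psi_r)$, so the asserted $G$-module decomposition follows at once.

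The only point requiring care is the bookkeeping of which action is being used where: the invariants, coinvariants, and all the integral operators are formed with respect to the left action, and it is precisely because the right action commutes with all of these that the vector-space splitting upgrades to a $G$-equivariant one. I do not anticipate a genuine obstacle here; the substantive input --- surjectivity of $p_r^s$ and injectivity of $i_r$ for $r\geq r_0$, both coming from Rodier's estimate --- has been carried out already.
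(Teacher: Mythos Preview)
Your proposal is correct and matches the paper's argument essentially line for line: the paper also specializes the general complement construction $\pi^{H_r,\psi_r}_c=\lim_{s\ge r}\ker p_r^s$ to $\pi=S(G)$, identifies $S(G)^{H_r,\psi_r}=\ind_{H_r}^G(\psi_r)$ and $S(G)_{U,\psi_U}\cong\ind_U^G(\psi)$, and observes that the maps $i_r^s$, $p_r^s$, $i_r$ are $G$-morphisms for the right-translation action, so the vector-space splitting is automatically $G$-equivariant. Your explicit remark about keeping straight which action is used for the invariants/coinvariants and which furnishes the $G$-module structure is a welcome clarification of a point the paper leaves somewhat terse.
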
 

\begin{proposition}   \label{P:vanishing} Fix $r\geq r_0$. For almost all $s \geq r$, $(\ind_{H_s}^G(\psi_s)_c)^{K_r}$ is trivial. 
\end{proposition}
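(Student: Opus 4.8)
The plan is to reformulate the vanishing as a stabilization statement for an increasing chain of modules over the Hecke algebra $\mathcal{H}_r := \mathcal{H}(G /\!/ K_r)$, and to reduce that to finite generation of the Gelfand--Graev representation. First I would apply Proposition~\ref{P:summand} with $s$ in place of $r$ (legitimate since $s \geq r \geq r_0$): it exhibits $\ind_{H_s}^G(\psi_s)$ as a $G$-invariant direct summand of $\ind_U^G(\psi)$, with the summand embedded via $i_s$ and complement $\ind_{H_s}^G(\psi_s)_c$. Since $V \mapsto V^{K_r}$ is an exact functor, applying it yields
\[
\ind_U^G(\psi)^{K_r} = M_s \oplus \big(\ind_{H_s}^G(\psi_s)_c\big)^{K_r}, \qquad M_s := i_s\big(\ind_{H_s}^G(\psi_s)^{K_r}\big) = (\operatorname{im} i_s)^{K_r},
\]
the last equality because $i_s$ is surjective onto $\operatorname{im} i_s$ and $(\cdot)^{K_r}$ is exact. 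Thus the proposition is equivalent to $M_s = \ind_U^G(\psi)^{K_r}$ for all large $s$, and $M_s$ is an $\mathcal{H}_r$-submodule, being the $K_r$-invariants of the $G$-subrepresentation $\operatorname{im} i_s$.

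Next I would record that $\{M_s\}_{s \geq r}$ is an increasing exhaustion of $\ind_U^G(\psi)^{K_r}$. It is increasing: for $s \leq t$ one has $i_s = i_t \circ i_s^t$ with $i_s^t$ injective (as $s,t \geq r_0$) and $G$-equivariant, so $M_s \subseteq M_t$. It exhausts: Proposition~\ref{P:compact_approximation} applied to $\pi = S(G)$ with the right-translation structure identifies $\lim_s \ind_{H_s}^G(\psi_s)$ with $\ind_U^G(\psi)$ via the maps $i_s$, and for $s \geq r_0$ this colimit is literally the increasing union of the images $\operatorname{im} i_s$; since taking $K_r$-invariants of an increasing union of $G$-subrepresentations commutes with the union, we get $\ind_U^G(\psi)^{K_r} = \bigcup_{s \geq r} M_s$.

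Granting that $\ind_U^G(\psi)^{K_r}$ is finitely generated over $\mathcal{H}_r$, the proof concludes: a finite generating set lies, by the increasing-union property, in a single $M_{s^\ast}$, so $M_s = \ind_U^G(\psi)^{K_r}$ for every $s \geq s^\ast$, which is the assertion ``for almost all $s \geq r$''. To prove this finite generation I would decompose $\ind_U^G(\psi) = \bigoplus_{\mathfrak{s}} \ind_U^G(\psi)_{\mathfrak{s}}$ over the Bernstein components $\mathfrak{s}$ of $G$, so that $\ind_U^G(\psi)^{K_r} = \bigoplus_{\mathfrak{s}} (\ind_U^G(\psi)_{\mathfrak{s}})^{K_r}$; a summand vanishes unless $\mathfrak{s}$ contains a representation with nonzero $K_r$-fixed vectors, which forces the depth of $\mathfrak{s}$ to be at most $r$, and there are only finitely many Bernstein components of bounded depth. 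For each of these finitely many $\mathfrak{s}$, $\ind_U^G(\psi)_{\mathfrak{s}}$ is finitely generated by \cite{BH}; since the category of smooth representations in a single Bernstein block is Noetherian, the $G$-subrepresentation of $\ind_U^G(\psi)_{\mathfrak{s}}$ generated by its $K_r$-fixed vectors is finitely generated, hence generated by finitely many $K_r$-fixed vectors, hence has finitely generated $K_r$-invariants over $\mathcal{H}_r$ (and these invariants coincide with $(\ind_U^G(\psi)_{\mathfrak{s}})^{K_r}$). Summing over the finitely many relevant $\mathfrak{s}$ gives the claim. I expect this last paragraph --- combining the finiteness of Bernstein components of bounded depth, the finite-generation theorem of \cite{BH}, and Noetherianity of blocks --- to be the only genuine obstacle; everything preceding it is bookkeeping with the direct and inverse systems already in place.
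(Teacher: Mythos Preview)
Your argument is correct and takes a genuinely different route from the paper's. The paper proves a preparatory lemma: for every irreducible generic $\pi$ with $\pi^{K_r}\neq 0$ there is an $m$ independent of $\pi$ with $\pi^{H_{mr},\psi_{mr}}\neq 0$; the proof reduces to supercuspidals and invokes Lapid's support bound \cite{La} for Whittaker functions. Then for $s\geq mr$ one argues by contradiction: a nonzero $K_r$-fixed vector in the complement would produce an irreducible generic quotient $\pi$ (using \cite{BH} to know components are finitely generated), and the lemma forces $\pi$ to also appear as a quotient of $\ind_{H_s}^G(\psi_s)$, giving $\dim\Hom_G(\ind_U^G\psi_U,\pi)\geq 2$ and violating multiplicity one for the Gelfand--Graev representation.

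By contrast, you never touch multiplicity one or \cite{La}: you recast the vanishing as stabilization of the increasing chain $M_s\subseteq \ind_U^G(\psi)^{K_r}$ of $\mathcal H_r$-submodules, and then appeal to finite generation of $\ind_U^G(\psi)^{K_r}$, which you obtain from \cite{BH} together with Noetherianity of Bernstein blocks and the finiteness of blocks containing $K_r$-fixed vectors. Your proof is softer and arguably more elementary, since it avoids the supercuspidal analysis and Lapid's theorem; the paper's proof, on the other hand, yields the explicit threshold $s\geq mr$ and makes transparent the role of multiplicity one. Both routes rely on \cite{BH} for finite generation of the Gelfand--Graev components. The only point worth tightening in your write-up is to state and cite cleanly the fact that only finitely many Bernstein components meet the subcategory generated by $K_r$-fixed vectors (this is standard from \cite{BD}); once that is in place your argument is complete.
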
 
\begin{proof}  The key is the following lemma: 
 \begin{lemma}  
 Let $r\geq r_0$. 
 Let $\pi$ be an irreducible Whittaker generic $G$-module such that $\pi^{K_r}\neq 0$. There exists a positive integer $m$, independent of 
 $\pi$, such that $\pi^{H_{mr}\psi_{mr}} \neq 0$. 
 \end{lemma} 
 \begin{proof} 
 The first step in the proof is a reduction to supercuspidal representations. Let $P=MN$ be a standard parabolic subgroup of block upper-triangular matrices. 
 Assume that $\pi$ is a Whittaker generic 
 subquotient of ${\mathrm{ Ind}}_{P^{\top}}^G\sigma$, where $P^{\top}$ is the transpose of $P$. 
  Let $K=\GL_n(R)$. Using $G=P^{\top} K$ and normality of $K_r$ in $K$, it is easy to see that $\pi^{K_r}\neq 0$ implies 
 that $\sigma^{K^M_r}\neq 0$ where $K^M_r=K_r\cap M$. Now assume that $\sigma^{H_s^M, \psi_s^M}\neq 0$ where $H^M_s=H_s\cap M$ and 
 $\psi_s^M$ is the restriction of $\psi_s$ to $H_s^M$. 
 Let $v\in \sigma^{H_s^M, \psi_s^M}$, and define $f\in {\mathrm {Ind}}_{P^{\top}}^G\sigma$, supported on $P^{\top} (H_s\cap N)$, such that $f(1)=v$ and that it is 
 right $(\psi_s)|_{H_s\cap N}$-invariant. Then $f\in ({\mathrm {Ind}}_{P^{\top}}^G\sigma)^{H_s,\psi_s}$. This type must belong to the Whittaker generic subquotient of the induced representation by injectivity of the map $i_s$. 
 
 It remains to deal with  supercuspidal $\pi$.  Let $\ell$ be a Whittaker functional on $\pi$, and for every $v\in \pi$ we have a Whittaker function 
 $f_v(g)= \ell(\pi(g)v)$. Let $T(r) \subset T$ be the subset of $t=(t_1, \ldots, t_n)$ such $1/q^{(2m-2)r} \leq | t_i/t_{i+1} | \leq q^{(2m-2)r}$, for all $i$.  
 By Theorem 2.1 in \cite{La},  there exists 
 $m$, independent of $\pi$, such that $f_v$ is supported on $UT(r) K$ for all $v\in \pi^{K_r}$. Since $f_v$ is non-zero, for a non-zero $v$, 
 there exists $t\in T(r)$ and $k\in K$ such that $\ell(\pi(tk) v) \neq 0$.   Since $K$ normalizes $K_r$, $\pi(k) v\in \pi^{K_r}$.  It follows that 
 $\pi(tk)v$ is fixed by $tK_r t^{-1}$. Observe that this group contains $H_{mr} \cap B^{\top}$, by the definition of $T(r)$, hence 
 \[ 
 w= \mathrm{vol}( H_{mr} \cap U)^{-1} \int_{H_{mr} \cap U} \bar \psi_U(u) \pi(u) \pi(tk) v \in \pi^{H_{mr},\psi_{mr}} 
 \] 
 and it is non-zero since $\ell(w)= \ell(\pi(tk)v) \neq 0$. The lemma is proved. 
  \end{proof} 
   Take $s \geq mr$, where $m$ is as in the lemma. Recall that, by  \cite{BH}, Bernstein's components of $\ind_{U}^G \psi_U$ are finitely generated and hence admit 
   irreducible quotients. Thus, if $(\ind_{H_s}^G(\psi_s)_c)^{K_r} \neq 0$ then 
$\ind_{H_s}^G(\psi_s)_c$ has an irreducible quotient $\pi$ such that $\pi^{K_r} \neq 0$. 
 Then $\pi^{H_s,\psi_s} \neq 0$ by the lemma, and 
 hence $\dim_G(\ind_{U}^G \psi_U, \pi) \geq 2$, by Proposition  \ref{P:summand},  a contradiction. 
 \end{proof}

\begin{proposition} \label{C:rodier} 
For every $G$-module $\pi$ generated by $\pi^{K_r}$,  and every vector space $\sigma$
\[ 
\Hom_G(\sigma\otimes \ind_U^G \psi_U, \pi) \cong \Hom(\sigma, \pi_{U,\psi_U}).  
\] 
\end{proposition}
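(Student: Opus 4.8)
The plan is to first reduce to the case $\sigma=\mathbb C$: choosing a basis of $\sigma$ identifies $\sigma\otimes\ind_U^G\psi_U$ with a direct sum of copies of the Gelfand-Graev representation $\Gamma:=\ind_U^G\psi_U$, and both sides of the asserted isomorphism become the corresponding products over that basis; so it suffices to produce a natural isomorphism $\Hom_G(\Gamma,\pi)\cong\pi_{U,\psi_U}$ for $\pi$ generated by $\pi^{K_r}$. Such a $\pi$ is also generated by $\pi^{K_{r'}}$ for any $r'\ge r$, so I may assume $r\ge r_0$, with $r_0$ as in Rodier's estimate. By Proposition \ref{P:summand}, for every $s\ge r_0$ there is a $G$-module splitting $\Gamma\cong\ind_{H_s}^G(\psi_s)\oplus\ind_{H_s}^G(\psi_s)_c$, so applying $\Hom_G(-,\pi)$ and Frobenius reciprocity for the compact open subgroup $H_s$ (which gives $\Hom_G(\ind_{H_s}^G\psi_s,\pi)\cong\pi^{H_s,\psi_s}$) yields
\[
\Hom_G(\Gamma,\pi)\;\cong\;\pi^{H_s,\psi_s}\;\oplus\;\Hom_G\!\bigl(\ind_{H_s}^G(\psi_s)_c,\,\pi\bigr).
\]

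The crux is to kill the second summand once $s$ is large, and this is the step I expect to be the main obstacle, since it is where Proposition \ref{P:vanishing}, the projectivity of $\Gamma$, and the hypothesis on $\pi$ must all be combined — and it is precisely the requirement that $\pi$ be generated at the single level $r$ that makes the bound on $s$ uniform. The hypothesis that $\pi$ is generated by $\pi^{K_r}$ means exactly that $\pi$ is a quotient of some direct sum $\bigoplus_{i\in I}\ind_{K_r}^G\mathbb C$. Now $\ind_{H_s}^G(\psi_s)_c$ is a direct summand of $\Gamma$, which is projective as a $G$-module by \cite{CS}; hence $\Hom_G(\ind_{H_s}^G(\psi_s)_c,-)$ is exact, so $\Hom_G(\ind_{H_s}^G(\psi_s)_c,\pi)$ is a quotient of $\Hom_G\!\bigl(\ind_{H_s}^G(\psi_s)_c,\bigoplus_{i\in I}\ind_{K_r}^G\mathbb C\bigr)$, and this vanishes as soon as $\Hom_G(\ind_{H_s}^G(\psi_s)_c,\ind_{K_r}^G\mathbb C)$ does, because a $G$-map into a direct sum all of whose coordinate projections are zero is itself zero. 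Finally, the value at the base coset $K_r$ is a $K_r$-equivariant map $\ind_{K_r}^G\mathbb C\to\mathbb C$, and post-composition with it embeds $\Hom_G(\ind_{H_s}^G(\psi_s)_c,\ind_{K_r}^G\mathbb C)$ into $\Hom_{K_r}(\ind_{H_s}^G(\psi_s)_c,\mathbb C)$; by complete reducibility of smooth $K_r$-representations the latter is the linear dual of $(\ind_{H_s}^G(\psi_s)_c)^{K_r}$, which is $0$ for all large $s$ by Proposition \ref{P:vanishing}.

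It remains to assemble the pieces. Once $\Hom_G(\ind_{H_s}^G(\psi_s)_c,\pi)=0$ for all $s\ge s_0$, the restriction maps $\Hom_G(\Gamma,\pi)\to\Hom_G(\ind_{H_s}^G\psi_s,\pi)\cong\pi^{H_s,\psi_s}$ are isomorphisms for $s\ge s_0$; as $s$ varies these identifications are intertwined by the transition maps $i_s^t$ of the direct system $\{\pi^{H_s,\psi_s}\}$, which are injective for $s,t\ge r_0$ and whose colimit is $\pi_{U,\psi_U}$ by Proposition \ref{P:compact_approximation}. Hence the system stabilizes beyond $s_0$ and $\Hom_G(\Gamma,\pi)\cong\pi^{H_{s_0},\psi_{s_0}}\cong\pi_{U,\psi_U}$. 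Undoing the reduction to $\sigma=\mathbb C$ gives the statement in general.
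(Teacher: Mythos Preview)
Your proof is correct and follows essentially the same route as the paper's: split $\Gamma$ via Proposition~\ref{P:summand}, kill the complementary summand using Proposition~\ref{P:vanishing}, identify $\Hom_G(\Gamma,\pi)$ with $\pi^{H_s,\psi_s}$ for large $s$ by Frobenius reciprocity, and pass to the colimit via Proposition~\ref{P:compact_approximation}. Your explicit use of the projectivity of $\ind_{H_s}^G(\psi_s)_c$ (as a summand of the projective $\Gamma$) to lift maps through a presentation of $\pi$ by copies of $\ind_{K_r}^G\mathbb C$ makes rigorous the step---deducing $\Hom_G(\ind_{H_s}^G(\psi_s)_c,\pi)=0$ from the vanishing of $K_r$-invariants---that the paper compresses into a single citation of Propositions~\ref{P:summand} and~\ref{P:vanishing}.
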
 
\begin{proof}  By  Propositions   \ref{P:summand}  and  \ref{P:vanishing}, for almost all $s\geq r $, 
\[ 
\Hom_G (\sigma \otimes \ind_U^G \psi_U, \pi)\cong 
\Hom_G(\sigma\otimes \ind_{H_s}^G(\psi_s), \pi). 
\] 
Let $\mathbb C[G]$ denote the group algebra of $G$. Then we can write
\[ 
\sigma\otimes \ind_{H_s}^G(\psi_s)\cong \ind_{H_s}^G(\sigma\otimes \psi_s) \cong \mathbb C[G]\otimes_{\mathbb C[H_s]} (\sigma \otimes \psi_s). 
\] 
Hence, by the Frobenius reciprocity, 
\[ 
\Hom_G(\sigma\otimes \ind_{H_s}^G(\psi_s), \pi) \cong \Hom(\sigma, \pi^{H_s, \psi_s}). 
\]  
Now observe that the starting space $\Hom_G (\sigma \otimes \ind_U^G \psi_U, \pi)$ does not depend on $s$. It follows that the spaces $\pi^{H_s, \psi_s}$ are 
isomorphic for almost all $s$. In particular, $\pi^{H_s, \psi_s}\cong \pi_{U,\psi_U}$ for such $s$.  Hence 
\[ 
\Hom_G (\sigma \otimes \ind_U^G \psi_U, \pi)
\cong \Hom(\sigma,(\pi)_{U,\psi_U}). 
\] 
\end{proof}

\subsection{Second adjointness} 
Now we are ready to prove Lemma \ref{lem second adjointness bz der}. We resume using the notation from the main 
body of the paper, in particular, $G_n=\GL_n(F)$, $U_n$ is the group of upper-triangular unipotent matrices, and $P_{n-i}=M_{n-i} N_{n-i}$ the standard maximal 
parabolic subgroup of block upper-triangular matrices with the Levi $G_{n-i}\times G_i$. Let $\pi$ be a smooth representation of $G_n$ generated by vectors fixed by the 
$r$-th principal congruence subgroup in $G_n$, and 
$\sigma$ a smooth representation of $G_{n-i}$, as in the statement of the lemma. Using the induction in stages, 
\[ 
\mathrm{ind}_{R_{n-i}}^{G_n}(\sigma \otimes \bar\psi_{i}) \cong \mathrm{Ind}_{P_{n-i}}^{G_{n}}(\sigma \boxtimes \ind_{U_i}^{G_i}(\bar \psi_i)).
\] 
By the second adjointness isomorphism for parabolic induction, due to Bernstein, 
\[ 
\Hom_{G_n}(\mathrm{Ind}_{P_{n-i}}^{G_{n}}(\sigma \boxtimes \ind_{U_i}^{G_i}(\bar \psi_i)),\pi )\cong \Hom_{G_{n-i}\times G_i} (\sigma \boxtimes \ind_{U_i}^{G_i}(\bar \psi_i), \pi_{N_{n-i}^{\top}}). 
\] 
It is easy to see that $\pi_{N_{n-i}^{\top}}$, as a $G_i$-module, is also generated by vectors fixed by the $r$-th principal congruence subgroup in $G_i$. Thus we can apply 
Proposition \ref{C:rodier} to $G_i$ to derive 
\[ 
\Hom_{G_{n-i}\times G_i} (\sigma \boxtimes \ind_{U_i}^{G_i}(\bar \psi_i), \pi_{N_{n-i}^{\top}}) \cong \Hom_{G_{n-i}} (\sigma, {}^{(i)}\pi). 
\]

\section{Acknowledgment} 

A part of this collaboration was carried out at the Weizmann Institute of Science during a program on the representation theory of reductive groups in 2017. 
Both authors would like to thank the organizers, Avraham Aizenbud, Joseph Bernstein, Dmitry Gourevitch and Erez Lapid, for their kind invitation to participate in the program. 
The second author is partially supported by an NSF grant DMS-1901745.

\end{document}